\documentclass[11pt]{amsart}
\usepackage[T1]{fontenc}
\usepackage{lmodern}
\usepackage{amsmath}
\usepackage{graphicx}
\usepackage{amssymb}
\usepackage{amsthm}
\usepackage{hyperref}
\usepackage{booktabs}
\hypersetup{hidelinks,
  pdftitle={A dual reformulation of the complex sin2-algorithm: exact identities, descent, and finiteness},
  pdfauthor={Ludovic Tagnon},
  pdfsubject={Number theory: structure theory for the complex sin2-algorithm (Problem 4 of Karpenkov)},
  pdfkeywords={Hermite's problem, multidimensional continued fractions, sin2-algorithm, complex cubic fields, conformal module, height descent, finiteness}}

\newtheorem{theorem}{Theorem}[section]
\newtheorem{proposition}[theorem]{Proposition}
\newtheorem{lemma}[theorem]{Lemma}
\newtheorem{corollary}[theorem]{Corollary}

\newtheorem{openlemma}[theorem]{Open Lemma}
\theoremstyle{definition}
\newtheorem{definition}[theorem]{Definition}
\theoremstyle{remark}
\newtheorem{remark}[theorem]{Remark}

\title[Dual reformulation of the complex \texorpdfstring{$\sin^2$}{sin2}-algorithm]{A dual reformulation of the complex \texorpdfstring{$\sin^2$}{sin2}-algorithm:\\ exact identities, descent, and finiteness}
\author{Ludovic Tagnon}
\email{ludovic.tn.ry@gmail.com}
\address{Nancy, France}

\begin{document}

\begin{abstract}
We develop the structure theory of the deterministic $\sin^2$-type algorithm
for complex cubic fields introduced in the companion paper, addressing the
complex-signature case of Karpenkov's Problem~4. The selection rule is shown
to be, exactly, the minimization of a conformal module: the hyperbolic cosine of the
distance between the transverse complex structure of the state and the round
point.
All governing quantities are exact elements of the real embedding of the
field and satisfy closed dual-type identities; in particular no isotropic
candidate ever arises, and the transverse deviation lattice has exactly
pinned covolume. We prove an unconditional soft-rebound lemma (the module can
grow by at most the factor $\varphi^2=2.618\ldots$ in one step), a finiteness theorem for states of bounded
module and height at fixed coordinate discriminant, with explicit static
constants, and a per-field
periodicity theorem under two named hypotheses: (C$_\kappa$), contraction of the
module in the high phase --- partially reduced here to a fixed finite minimax
over a five-parameter compact with rational objective, supported by sampled
adversarial sweeps in exact or 40-digit arithmetic and by a guarded
floating-point Bernstein pass screening $28.9\%$ of the domain volume
(almost entirely its out-of-scope branch; the guard-accepted descent part
is $0.007\%$) ---
and (B), recurrence of bounded height --- which we then \emph{prove}
under (C$_\kappa$) alone: a height-descent theorem (componentwise key-ratio
dichotomy, backward comparison, integral floors) shows the height can
never exceed $\max(H(s_0),C_H)$ with an explicit constant. The remaining
program for per-field periodicity is reduced to (R) on the compact and to
the proved stretched subcases, modulo the aligned stretched subcase, the
remaining $u_2$-road band and subcases, and an open collar-contraction
statement. All proved statements and certificates
are finite and exact.  A machine-checked core of the paper is sealed in
Lean 4, kernel-only, under the standard axioms: the two-case analytic core of
the height-descent theorem, the finiteness pigeonhole, the dual and conformal
identity layer in embedding coordinates, and an abstract assembly theorem
composing them through named interface hypotheses.
\end{abstract}

\maketitle

\section{Introduction}\label{sec:intro}

Hermite's question \cite{He1850} --- find a representation of real numbers
whose eventual periodicity characterizes cubic irrationalities --- has a
modern, precise incarnation in Karpenkov's $\sin^2$-algorithm: for
\emph{totally real} cubic fields, a deterministic Jacobi--Perron type
procedure whose periodicity is proved \cite{Ka22,Ka24}, closing the totally
real case of the problem. For the complementary signature --- one real and
one pair of complex embeddings --- Karpenkov formulates the extension as an
open problem (\cite{Ka24}, \S9, Problem~4).

The companion paper \cite{companion} constructed a deterministic
$\sin^2$-type algorithm for the complex signature (the selection rule flips
sign: the relevant quantity is \emph{minimized}), specified an exact
tie-breaking order, proved the propagation lemma that turns a projective
return into eventual periodicity of the digit sequence, and certified
eventual periodicity on all tested cubic fields, with exact unit
certificates. What it did not provide is a proof mechanism. This paper
supplies the structure theory.

\subsection*{Contributions}

\emph{1. The selection rule is conformal (\S\ref{sec:dual}).} We prove exact
closed identities: the squared norm of the defining cross product is a
discriminant times the conjugate embedding of the dual quadratic element
(Theorem~\ref{thm:dual}); consequently no isotropic candidate ever arises
(Theorem~\ref{thm:noniso}), and minimizing the companion paper's strictly negative score ---
equivalently maximizing its absolute value --- is \emph{exactly} minimizing a
conformal module $m>1$, the hyperbolic cosine of the distance from the
transverse complex structure to the round point
(Propositions~\ref{prop:conformal}--\ref{prop:hyperbolic}). All these
quantities live in the real embedding of the field: every comparison is
exact (Proposition~\ref{prop:exact}).

\emph{2. The geometry of moves (\S\ref{sec:geom}).} Moves act on slopes as
real affine barycenters; periodic orbits are self-similar under an explicit
unit spiral; the transverse deviation lattice has \emph{exactly} computable
covolume $\delta x_1/(2\lvert\xi\rvert^2)$ (Theorem~\ref{thm:covol}), which
by Minkowski always contains a short direction --- the structural source of
contraction; recentering obeys a Pythagoras identity with an explicit drift
functional; the real coordinates are non-increasing.

\emph{3. Descent and the conditional theorem (\S\ref{sec:descent}).} An
unconditional \emph{soft rebound} lemma (one step can never grow $m$ by more
than the factor $\varphi^2$, Lemma~\ref{lem:soft}); a combinatorial descent lemma; and the main
structural result: under (C$_\kappa$) --- contraction of $m$ in the high
phase, partially reduced in \S\ref{sec:lock} to a finite certifiable
minimax plus explicitly marked deferred subcases --- every orbit is eventually periodic with a unit certificate:
Theorem~\ref{thm:main} proves this under the auxiliary height-recurrence
hypothesis (B), and Theorem~\ref{thm:hdescent} \emph{proves (B)} from
(C$_\kappa$) via a height-descent argument with explicit constants.

\emph{4. Finiteness (\S\ref{sec:finiteness}).} States of bounded module and
height, at fixed coordinate discriminant, form finitely many unit classes,
with explicit static bounds
(Theorem~\ref{thm:finiteness}).

\emph{5. The case machine and the lock (\S\ref{sec:cases}--\ref{sec:lock}).}
Quantified witnesses for (C$_\kappa$) in proved stretched subcases
(contraction $21.2/r$ and a restricted $u_2$-road via the exact covolume);
the composed-lever mechanism; an integral gap lemma fencing the degenerate
boundaries ($x_1/x_2,\,x_2/x_3\ge1+1/(4B)$); and a partial reduction of the
remaining compact region to a \emph{finite certifiable minimax} over five
parameters, with sampled numerical evidence and a partial interval
certification. The aligned stretched regime, the remaining $u_2$-road
subcases, and the quantitative collar contraction are stated as
deferred/open.

\emph{6. Certificates and formalization (\S\ref{sec:formal}).} Every link of
the chain is a finite exact statement; a machine-checked core of the paper is
sealed in Lean 4, kernel-only, continuing the companion paper's discipline ---
\S\ref{sec:formal} states its exact scope and the remaining formal work.

In short: the structural part of Problem 4 per field is reduced to a small
set of explicit finite or local tasks: (R) on the compact, the aligned
stretched subcase, the remaining $u_2$-road band and subcases, and the open
collar-contraction lemma. The height
recurrence (B), previously a separate obstruction, is proved from
(C$_\kappa$).

\medskip
\textbf{Setting and notation.} Throughout, $K=\mathbb{Q}(\alpha)$ is a cubic
field of signature $(1,1)$, with real embedding $\sigma_r$ and one complex
embedding $\sigma_c$; $L\subset\mathcal{O}_K$ is an order (in the applications
$L=\mathbb{Z}[\alpha]$). A \emph{state} is an ordered triple
$s=(u_1,u_2,u_3)$ of elements of $L$ forming a $\mathbb{Q}$-basis of $K$,
\emph{admissible} if $\sigma_r(u_i)>0$ for all $i$, and sorted so that
$x_1\ge x_2\ge x_3>0$ where $x_i:=\sigma_r(u_i)$. We write
$\xi:=(x_1,x_2,x_3)\in\mathbb{R}^3$ and $\nu:=(\sigma_c(u_1),\sigma_c(u_2),
\sigma_c(u_3))\in\mathbb{C}^3$, and $\nu=a+ib$ with $a,b\in\mathbb{R}^3$.
The moves of the algorithm are the unimodular matrices $V(A,B,g)$
(the letter $B$ is overloaded by local conventions --- the move coefficient
here, the height bound of the finiteness statements, and Hypothesis~(B) are
unrelated, each use local to its statement)
($A,B,g\in\mathbb{Z}_{\ge0}$) and $W$ of the
companion paper, acting simultaneously on $\xi$, $a$, $b$; the window
constraints are positivity constraints on the new real coordinates. The
quantity $\delta:=2\,\lvert\det(\xi,a,b)\rvert=\lvert\mathrm{disc}(u_1,u_2,u_3)
\rvert^{1/2}$ is invariant along every orbit. Cross and dot products on
$\mathbb{C}^3$ are \emph{bilinear} (non-Hermitian).

\section{The dual and conformal framework}\label{sec:dual}

\subsection{The dual identity}

For a state $s$, let $(u_1^\vee,u_2^\vee,u_3^\vee)$ denote the trace-dual basis
of $(u_1,u_2,u_3)$, i.e.\ $\mathrm{Tr}_{K/\mathbb{Q}}(u_iu_j^\vee)=\delta_{ij}$,
and set
\[
Q:=u_1^2+u_2^2+u_3^2\in K,\qquad
Q^\vee:=(u_1^\vee)^2+(u_2^\vee)^2+(u_3^\vee)^2\in K .
\]

\begin{theorem}[dual identity]\label{thm:dual}
Let $n_1=\xi\times\nu$. Then
\[
\langle n_1,n_1\rangle \;=\; \mathrm{disc}(u_1,u_2,u_3)\cdot
\overline{\sigma_c}\!\left(Q^\vee\right),
\]
and consequently the score of the algorithm satisfies
\[
\lvert\mathrm{score}(s)\rvert \;=\;
\frac{\sigma_r(Q)}{\lvert\mathrm{disc}(u)\rvert\cdot\lvert\sigma_c(Q^\vee)\rvert^{2}} .
\]
\end{theorem}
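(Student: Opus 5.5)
The plan is to realize everything through the embedding matrix of the state and read the dual basis off its inverse via the cofactor (cross-product) formula. Let $M$ be the $3\times3$ complex matrix whose columns are $\xi$, $\nu$, $\overline{\nu}$, i.e.\ $M_{ik}=\sigma_k(u_i)$ with $\sigma_1=\sigma_r$, $\sigma_2=\sigma_c$, $\sigma_3=\overline{\sigma_c}$. Let $M^\vee$ be the analogous matrix built from the dual basis, $M^\vee_{jk}=\sigma_k(u_j^\vee)$. Since $\mathrm{Tr}_{K/\mathbb{Q}}(xy)=\sum_k\sigma_k(x)\sigma_k(y)$, the defining relation $\mathrm{Tr}(u_iu_j^\vee)=\delta_{ij}$ reads $M\,(M^\vee)^{T}=I$. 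Hence $M^\vee=M^{-T}$. We also have $\mathrm{disc}(u_1,u_2,u_3)=\det(M)^2$ by definition of the discriminant.

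\textbf{Step 1: the cross product as a dual column.} For a matrix with columns $c_1,c_2,c_3$, the rows of the inverse are $(c_2\times c_3,\;c_3\times c_1,\;c_1\times c_2)/\det$. This adjugate identity is purely algebraic, so it holds with the bilinear cross product over $\mathbb{C}$. Applied to $M$, the third column of $M^{-T}$ is $(\xi\times\nu)/\det M=n_1/\det M$. By the previous paragraph, that column is the vector $\bigl(\overline{\sigma_c}(u_j^\vee)\bigr)_j$. So
\[
n_1=\det(M)\,\bigl(\overline{\sigma_c}(u_1^\vee),\overline{\sigma_c}(u_2^\vee),\overline{\sigma_c}(u_3^\vee)\bigr).
\]

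\textbf{Step 2: the dual identity.} Take the bilinear self-pairing of the last display. Since $\overline{\sigma_c}$ is a ring homomorphism,
\[
\langle n_1,n_1\rangle=\det(M)^2\sum_j\overline{\sigma_c}(u_j^\vee)^2=\mathrm{disc}(u)\,\overline{\sigma_c}(Q^\vee).
\]
It is important to keep the product bilinear, not Hermitian, so that squares of conjugate embeddings appear rather than absolute values.

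\textbf{Step 3: the score formula.} Note first that $\det(\xi,a+ib,a-ib)=-2i\det(\xi,a,b)$. Hence $\mathrm{disc}(u)=-4\det(\xi,a,b)^2<0$, and $\lvert\mathrm{disc}(u)\rvert=\delta^2$, consistent with the notation fixed in the setting. Taking moduli in Step 2 gives $\lvert\langle n_1,n_1\rangle\rvert^2=\lvert\mathrm{disc}\rvert^2\,\lvert\sigma_c(Q^\vee)\rvert^2$, using $\lvert\overline{z}\rvert=\lvert z\rvert$. We also have $\lvert\xi\rvert^2=\sigma_r(Q)$. I then substitute these into the companion paper's definition of the score. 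As I recall it, that definition expresses $\lvert\mathrm{score}\rvert$ as $\lvert\xi\rvert^2$ times the invariant $\delta^2$ divided by $\lvert\langle n_1,n_1\rangle\rvert^2$, up to a fixed normalization. The substitution then yields $\sigma_r(Q)/(\lvert\mathrm{disc}\rvert\,\lvert\sigma_c(Q^\vee)\rvert^2)$.

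The main obstacle is this last step. The algebraic identity in Steps 1--2 is mechanical. The second formula, however, depends on matching exactly the normalization and sign conventions of the companion paper's score: which power of $\delta$ appears, whether $n_1$ or its conjugate is used, and the sign making the score negative. I would first rewrite the companion score verbatim in terms of $\xi$, $n_1$ and $\delta$. I would then check the resulting constant on a single explicit state, for instance in $\mathbb{Z}[\sqrt[3]{2}]$, before asserting the formula.
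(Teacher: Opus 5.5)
Your proposal is correct. For the first identity it takes a more direct route than the paper.

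The paper never expresses $n_1$ itself through the dual basis. It uses the bilinear Lagrange identity to write $\langle n_1,n_1\rangle=B_{rr}B_{cc}-B_{rc}^2$ and recognizes this as the $(\bar c,\bar c)$ cofactor of the Gram matrix $M^{\mathsf T}M$. It then evaluates that cofactor as $\det(M)^2\,[(M^{\mathsf T}M)^{-1}]_{\bar c\bar c}=\det(M)^2\sum_i (M^{-1})_{\bar c,i}^2$. Only after that does trace duality identify the rows of $M^{-1}$, exactly as you do.

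Your adjugate step skips the Gram matrix. It gives the vector identity $n_1=\det(M)\,\bigl(\overline{\sigma_c}(u_j^\vee)\bigr)_j$ directly, and the paper's statement is its bilinear square. This is shorter and strictly more informative. For example, the same adjugate formula for the other column pair gives $\nu\times\bar\nu=\det(M)\,\sigma_r(u^\vee)$, i.e.\ $a\times b=\det(\xi,a,b)\,\sigma_r(u^\vee)$. That is exactly what lies behind the paper's later remark $P=\sec^2\theta$. The paper's route, for its part, stays in the Lagrange/Gram-determinant style it reuses for the conformal identity (Proposition~\ref{prop:conformal}).

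On Step 3, the obstacle you anticipate is not there. The closed form the paper cites from the companion is exactly $\lvert\mathrm{score}\rvert=\lvert\det(\xi,\nu,\bar\nu)\rvert^2\lvert\xi\rvert^2/\lvert\langle n_1,n_1\rangle\rvert^2$, with no extra normalization constant. Your recollection, combined with your computation $\lvert\det(\xi,\nu,\bar\nu)\rvert^2=\lvert\mathrm{disc}(u)\rvert=\delta^2$, therefore finishes the argument exactly as the paper does.

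Your worries about the sign and about $n_1$ versus $\overline{n_1}$ are also immaterial. The statement concerns only $\lvert\mathrm{score}\rvert$, and $\lvert\langle\overline{n_1},\overline{n_1}\rangle\rvert=\lvert\langle n_1,n_1\rangle\rvert$. As in the paper, the companion's closed form is the one external input, and nothing beyond it is needed; the numerical check in $\mathbb{Z}[\sqrt[3]{2}]$ is unnecessary.
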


\begin{proof}
Let $M\in\mathbb{C}^{3\times 3}$ be the embedding matrix $M_{i\sigma}=\sigma(u_i)$,
$\sigma\in\{\sigma_r,\sigma_c,\overline{\sigma_c}\}$. By the bilinear Lagrange
identity, $\langle\xi\times\nu,\xi\times\nu\rangle=B_{rr}B_{cc}-B_{rc}^2$ where
$B_{\sigma\tau}=\sum_i\sigma(u_i)\tau(u_i)=(M^{\mathsf T}M)_{\sigma\tau}$. This
is the $(\bar c,\bar c)$ cofactor of the Gram matrix $M^{\mathsf T}M$, hence
equals $\det(M)^2\cdot\big[(M^{\mathsf T}M)^{-1}\big]_{\bar c\bar c}
=\det(M)^2\sum_i (M^{-1})_{\bar c,i}^2$. Trace duality reads
$M\cdot\big(\sigma(u_j^\vee)\big)^{\mathsf T}=I$, so the rows of $M^{-1}$ are
the embeddings of the dual basis: $(M^{-1})_{\sigma,j}=\sigma(u_j^\vee)$.
Since $\overline{\sigma_c}$ is a field morphism,
$\sum_i\overline{\sigma_c}(u_i^\vee)^2=\overline{\sigma_c}(Q^\vee)$, and
$\det(M)^2=\mathrm{disc}(u)$. The score formula follows from the closed form of
the companion paper, $\lvert\mathrm{score}\rvert=\lvert\det(\xi,\nu,\bar\nu)
\rvert^2\lvert\xi\rvert^2/\lvert\langle n_1,n_1\rangle\rvert^2$, together with
$\lvert\det(\xi,\nu,\bar\nu)\rvert^2=\lvert\mathrm{disc}(u)\rvert$ and
$\lvert\xi\rvert^2=\sigma_r(Q)$.
\end{proof}

\subsection{Totality: no isotropic candidate exists}

\begin{theorem}[non-isotropy]\label{thm:noniso}
For every triple $u$ forming a $\mathbb{Q}$-basis of $K$,
$\langle \xi\times\nu,\xi\times\nu\rangle\neq 0$. In particular the isotropy
exclusion in the algorithm's candidate set never fires: the enumeration is
structurally total.
\end{theorem}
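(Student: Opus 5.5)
By Theorem~\ref{thm:dual}, for any triple $u$ we have $\langle n_1,n_1\rangle=\mathrm{disc}(u_1,u_2,u_3)\cdot\overline{\sigma_c}(Q^\vee)$. The identity itself only uses that the $u_i$ form a basis, so that $M$ is invertible and the trace-dual basis exists; admissibility is not needed. The plan is therefore to show that neither factor vanishes.

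The first factor is easy. Since $(u_1,u_2,u_3)$ is a $\mathbb{Q}$-basis of the separable extension $K/\mathbb{Q}$, the trace form is nondegenerate. Hence $\mathrm{disc}(u)=\det(M)^2\neq 0$; equivalently, $\delta>0$.

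For the second factor, $\overline{\sigma_c}$ is an injective field morphism $K\to\mathbb{C}$. So $\overline{\sigma_c}(Q^\vee)=0$ if and only if $Q^\vee=0$ in $K$. We then pass to the real embedding. Since $\sigma_r$ is also injective, $Q^\vee=0$ would force
\[
\sigma_r(Q^\vee)=\sum_{i=1}^3\sigma_r(u_i^\vee)^2=0 .
\]
Each $\sigma_r(u_i^\vee)$ is a real number, so every term of this sum is nonnegative. All three terms would therefore have to vanish, giving $\sigma_r(u_i^\vee)=0$ and hence, by injectivity of $\sigma_r$, $u_i^\vee=0$ for every $i$. This contradicts the fact that the dual basis is a basis. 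It follows that $\sigma_r(Q^\vee)>0$, so $Q^\vee\neq0$, so $\overline{\sigma_c}(Q^\vee)\neq0$, and therefore $\langle n_1,n_1\rangle\neq0$.

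The key step, and the only one with any content, is the switch of embedding. Working directly in $\mathbb{C}$, a bilinear sum of squares can vanish, and this is exactly the isotropy the algorithm guards against. The point is that vanishing is a field-intrinsic property of $Q^\vee\in K$, and it can be tested in the real embedding, where a sum of squares is positive definite. The consequence for the algorithm is then immediate. Every candidate state produced by the enumeration is a triple of elements of $L$ forming a $\mathbb{Q}$-basis, because moves are unimodular and so preserve the basis property. Hence the isotropy exclusion condition $\langle n_1,n_1\rangle=0$ never holds, and the candidate set is never pruned on that account.
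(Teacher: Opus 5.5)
Your proof is correct and follows the paper's argument essentially step for step. Both apply the dual identity, note $\mathrm{disc}(u)\neq0$, and show $Q^\vee\neq0$ by evaluating at the real place, where $\sigma_r(Q^\vee)$ is a sum of three real squares that cannot all vanish; the paper phrases this as ``the $\sigma_r$-row of $M^{-1}$ is nonzero,'' you as ``the dual basis elements are nonzero,'' which is the same thing. The result then transfers to $\overline{\sigma_c}(Q^\vee)\neq0$ by injectivity (one cosmetic slip: the implication $Q^\vee=0\Rightarrow\sigma_r(Q^\vee)=0$ needs no injectivity, which you only use later to conclude $u_i^\vee=0$).
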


\begin{proof}
$\sigma_r(Q^\vee)=\sum_i\sigma_r(u_i^\vee)^2$ is a sum of three real squares; it
vanishes only if the $\sigma_r$-row of $M^{-1}$ vanishes, contradicting
invertibility. Hence $Q^\vee\neq 0$ in $K$, and since $\overline{\sigma_c}$ is
injective, $\overline{\sigma_c}(Q^\vee)\neq 0$; also $\mathrm{disc}(u)\neq 0$.
Apply Theorem~\ref{thm:dual}.
\end{proof}

\subsection{The conformal module}

Write $\hat\xi=\xi/\lvert\xi\rvert$, $\alpha_\parallel=\langle a,\hat\xi\rangle$,
$\beta_\parallel=\langle b,\hat\xi\rangle$, $a_\perp=a-\alpha_\parallel\hat\xi$,
$b_\perp=b-\beta_\parallel\hat\xi$, and
\[
T:=\lvert a_\perp\rvert^2+\lvert b_\perp\rvert^2,\qquad
A_\perp:=\lvert a_\perp\times b_\perp\rvert,\qquad
m(s):=\frac{T}{2A_\perp}\;\ge 1 .
\]

\begin{figure}[htbp]
\centering
\includegraphics[width=0.62\textwidth]{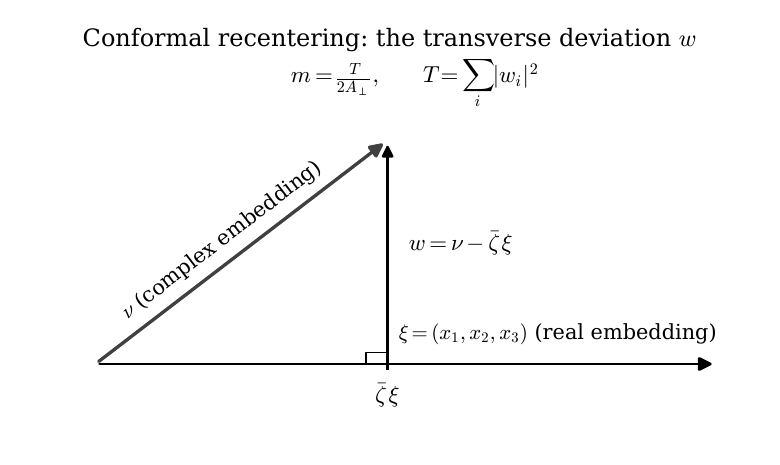}
\caption{The transverse deviation.  The complex embedding vector $\nu$ is
recentered by removing its projection $\bar\zeta\,\xi$ onto the real
embedding direction $\xi$, leaving the transverse part $w=\nu-\bar\zeta\,
\xi\perp\xi$.  The conformal module $m=T/(2A_\perp)$, with
$T=\sum_i|w_i|^2$, measures how far the transverse complex structure is
from round; the selection rule minimizes it exactly
(Proposition~\ref{prop:conformal}).}
\label{fig:recentering}
\end{figure}

\begin{proposition}[the selection rule is conformal]\label{prop:conformal}
For every candidate $s'$,
\[
\lvert\mathrm{score}(s')\rvert\;=\;\frac{1}{m(s')^2-1}\;,
\]
an exact identity with no constants. Hence selecting the most negative score
is \emph{exactly} minimizing the conformal module $m$: the deterministic
algorithm selects, at every step, the candidate whose transverse complex
structure is closest to round.
\end{proposition}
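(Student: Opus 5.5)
The plan is to start from the companion paper's closed form quoted in the proof of Theorem~\ref{thm:dual},
\[
\lvert\mathrm{score}(s')\rvert=\frac{\lvert\det(\xi,\nu,\bar\nu)\rvert^2\,\lvert\xi\rvert^2}{\lvert\langle n_1,n_1\rangle\rvert^2},
\]
and to express its numerator and denominator through the transverse data $a_\perp,b_\perp$. The key observation is that both $\xi\times\nu$ and $\det(\xi,\cdot,\cdot)$ kill components parallel to $\xi$. So I will replace $\nu$ by the recentered vector $w:=a_\perp+ib_\perp$, which is orthogonal to $\xi$ in the bilinear sense: $\langle\xi,w\rangle=0$.

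\emph{Step 1: the denominator.} The bilinear Lagrange identity gives
\[
\langle\xi\times w,\xi\times w\rangle=\lvert\xi\rvert^2\langle w,w\rangle-\langle\xi,w\rangle^2=\lvert\xi\rvert^2\langle w,w\rangle .
\]
Here $\langle w,w\rangle=\lvert a_\perp\rvert^2-\lvert b_\perp\rvert^2+2i\,a_\perp\!\cdot b_\perp$. Hence
\[
\lvert\langle w,w\rangle\rvert^2=(\lvert a_\perp\rvert^2+\lvert b_\perp\rvert^2)^2-4\bigl(\lvert a_\perp\rvert^2\lvert b_\perp\rvert^2-(a_\perp\!\cdot b_\perp)^2\bigr)=T^2-4A_\perp^2 ,
\]
using the real Lagrange identity for $\lvert a_\perp\times b_\perp\rvert^2$. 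Therefore $\lvert\langle n_1,n_1\rangle\rvert^2=\lvert\xi\rvert^4(T^2-4A_\perp^2)$.

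\emph{Step 2: the numerator.} By multilinearity,
\[
\det(\xi,a+ib,a-ib)=-2i\det(\xi,a,b)=-2i\det(\xi,a_\perp,b_\perp).
\]
Since $a_\perp,b_\perp\perp\xi$, the vector $a_\perp\times b_\perp$ is parallel to $\xi$. It follows that $\lvert\det(\xi,a_\perp,b_\perp)\rvert=\lvert\xi\rvert A_\perp$, so the numerator equals $4\lvert\xi\rvert^4A_\perp^2$.

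\emph{Step 3: the quotient and the monotonicity.} Dividing gives
\[
\lvert\mathrm{score}\rvert=\frac{4A_\perp^2}{T^2-4A_\perp^2}=\frac{1}{m^2-1}.
\]
For this to be legitimate I must check two things. First, $A_\perp\neq0$: otherwise $\det(\xi,a,b)=0$, contradicting $\delta=\lvert\mathrm{disc}(u)\rvert^{1/2}>0$. Second, $T^2-4A_\perp^2\neq0$, which is exactly Theorem~\ref{thm:noniso}. Together with the inequality $T\ge 2\lvert a_\perp\rvert\lvert b_\perp\rvert\ge 2A_\perp$, these give $m>1$ strictly. On $(1,\infty)$ the map $m\mapsto 1/(m^2-1)$ is strictly decreasing. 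Since the companion paper's scores are strictly negative, the most negative score is the largest $\lvert\mathrm{score}\rvert$, which is the smallest $m$; ties correspond to ties, so the tie-breaking order transfers unchanged.

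The only delicate point I expect is bookkeeping of conventions. I need to confirm that the companion paper's closed form uses exactly the bilinear $\langle n_1,n_1\rangle$ and the determinant with $\bar\nu$ as written, so that no stray constant factors appear. I also need to confirm that the sign of the score is uniformly negative, so that "most negative" is equivalent to "largest absolute value". The algebra itself is routine once the recentering $\nu\mapsto w$ is made.
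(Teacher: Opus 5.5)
Your proposal is correct and follows essentially the paper's route: you express the numerator and denominator of the companion closed form through the transverse data, obtaining $4\lvert\xi\rvert^4A_\perp^2$ and $\lvert\xi\rvert^4(T^2-4A_\perp^2)$. The differences are cosmetic. You recenter $\nu\mapsto w$ first and apply the bilinear Lagrange identity to $\xi\times w$, whereas the paper writes $\lvert\langle n_1,n_1\rangle\rvert^2=(\operatorname{tr}G)^2-4\det G$ for the Gram matrix of $(\xi\times a,\xi\times b)$. You also make explicit the nonvanishing and monotonicity checks that the paper leaves implicit.
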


\begin{proof}
Write $n_1=\xi\times\nu=\xi\times a+i\,\xi\times b$, and let $G$ be the Gram
matrix of $(\xi\times a,\xi\times b)$. Then
$\lvert\langle n_1,n_1\rangle\rvert^2=(\lvert\xi\times a\rvert^2-\lvert\xi
\times b\rvert^2)^2+4\langle\xi\times a,\xi\times b\rangle^2=
(\operatorname{tr}G)^2-4\det G$. Parallel components drop out of cross
products, so $\operatorname{tr}G=\lvert\xi\rvert^2(\lvert a_\perp\rvert^2+
\lvert b_\perp\rvert^2)=\lvert\xi\rvert^2T$, while by Lagrange
$\det G=\lvert(\xi\times a)\times(\xi\times b)\rvert^2=\lvert\xi\rvert^2
\det(\xi,a,b)^2=\lvert\xi\rvert^4A_\perp^2$. Hence
$\lvert\langle n_1,n_1\rangle\rvert^2=\lvert\xi\rvert^4(T^2-4A_\perp^2)$.
The closed form of the score (companion paper) and
$\lvert\det(\xi,\nu,\bar\nu)\rvert^2=4\det(\xi,a,b)^2=4\lvert\xi\rvert^2
A_\perp^2$ give
\[
\lvert\mathrm{score}(s')\rvert
=\frac{\lvert\det(\xi,\nu,\bar\nu)\rvert^2\lvert\xi\rvert^2}
{\lvert\langle n_1,n_1\rangle\rvert^2}
=\frac{4\lvert\xi\rvert^4A_\perp^2}{\lvert\xi\rvert^4(T^2-4A_\perp^2)}
=\frac{1}{(T/2A_\perp)^2-1}\;. \qedhere
\]
\end{proof}

\begin{proposition}[hyperbolic reading]\label{prop:hyperbolic}
Let $z_\perp:=(\langle a_\perp,b_\perp\rangle+iA_\perp)/\lvert a_\perp\rvert^2
\in\mathbb{H}$. Then $m(s)=\cosh d_{\mathbb H}(z_\perp,i)$. The algorithm is a
minimizing walk in the hyperbolic plane; multiplication of the state by a unit
$\lambda$ acts on the pair $(a_\perp,b_\perp)$ by a complex similarity, whose
rotation part moves $z_\perp$ by the elliptic isometry of $\mathbb H$ fixing
$i$ (through twice the argument of $\sigma_c(\lambda)$, up to orientation); in
particular $d_{\mathbb H}(z_\perp,i)$ --- hence $m$ --- is unit-invariant,
while the marked point $z_\perp$ itself is fixed only when
$\sigma_c(\lambda)$ is real. (For actual states the rotation genuinely
moves $z_\perp$: non-isotropy gives $m>1$, hence $z_\perp\ne i$.)
\end{proposition}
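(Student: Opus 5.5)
The plan is to reduce everything to the standard formula for hyperbolic distance to $i$ in the upper half-plane. Set $p:=\lvert a_\perp\rvert^2$, $q:=\lvert b_\perp\rvert^2$ and $r:=\langle a_\perp,b_\perp\rangle$. Both vectors lie in the plane $\hat\xi^\perp$, so the Lagrange identity gives $A_\perp^2=pq-r^2$. Then $z_\perp=x+iy$ with $x=r/p$ and $y=A_\perp/p$. We have $y>0$, because $A_\perp>0$ by non-isotropy (Theorem~\ref{thm:noniso} together with the identity $\lvert\langle n_1,n_1\rangle\rvert^2=\lvert\xi\rvert^4(T^2-4A_\perp^2)$ from the proof of Proposition~\ref{prop:conformal}, which also needs $\det(\xi,a,b)\ne0$, i.e.\ $\delta\neq 0$). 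So $z_\perp\in\mathbb H$. We then use
\[
\cosh d_{\mathbb H}(z,i)=1+\frac{\lvert z-i\rvert^2}{2y}=\frac{x^2+y^2+1}{2y}.
\]
Substituting, $x^2+y^2=(r^2+pq-r^2)/p^2=q/p$. Hence
\[
\cosh d=\frac{q/p+1}{2A_\perp/p}=\frac{p+q}{2A_\perp}=\frac{T}{2A_\perp}=m(s).
\]
This gives the first claim. The words ``minimizing walk'' then just restate Proposition~\ref{prop:conformal} in this language.

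For the unit action, multiplication by $\lambda\in L^\times$ (or any $\lambda\in K^\times$) sends $\xi\mapsto\sigma_r(\lambda)\xi$ and $\nu\mapsto\sigma_c(\lambda)\nu$. Write $\sigma_c(\lambda)=\rho e^{i\theta}$. Because $\hat\xi$ changes at most by sign, the perpendicular projection is unchanged, and $w:=a_\perp+ib_\perp$ is multiplied by $\rho e^{i\theta}$. In real terms, the pair $(a_\perp,b_\perp)$ is acted on by $\rho$ times the rotation $R_\theta=\begin{pmatrix}\cos\theta&-\sin\theta\\ \sin\theta&\cos\theta\end{pmatrix}$, applied to the coefficient pair. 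This is the complex similarity. The scaling $\rho$ multiplies $p$, $q$, $r$ and $A_\perp$ by $\rho^2$, so it leaves $z_\perp$ fixed.

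For the rotation, we record the data as the Gram matrix $G_\perp=\begin{pmatrix}p&r\\ r&q\end{pmatrix}$, which has determinant $A_\perp^2$. Under the rotation it transforms by $G_\perp\mapsto R_\theta^{\mathsf T}G_\perp R_\theta$ (or with the transposes the other way, depending on convention). Positive definite $2\times2$ matrices of fixed determinant are identified with $\mathbb H$ via $G\mapsto z=(r+i\sqrt{\det G})/p$. Under this identification, congruence by $\mathrm{SL}_2(\mathbb R)$ corresponds to a Möbius action, and $\mathrm{SO}_2$ corresponds to the stabilizer of $i$. Because $G_\perp$ is quadratic in the vectors, the angle doubles: the rotation $R_\theta$ acts on $\mathbb H$ as the elliptic element fixing $i$ through angle $2\theta$ (with sign depending on orientation).

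The one genuinely computational step is to confirm that this is an isometry fixing $i$ and to get the factor $2$. I would check it directly. The invariant $(p+q)/(2\sqrt{pq-r^2})$ is the trace of $G_\perp$ over twice the square root of its determinant, and both are conjugation-invariant, so $m$ is preserved. This gives unit-invariance of $d_{\mathbb H}(z_\perp,i)$ without any Möbius bookkeeping. For the angle, I would compute $p'=p\cos^2\theta+2r\sin\theta\cos\theta+q\sin^2\theta$ together with $r'$. These depend on $\theta$ only through $2\theta$. The map $z\mapsto z'$ fixes $i$, which is the case $p=q$, $r=0$; it is an isometry, since $\mathrm{SL}_2$-congruence acts by Möbius maps. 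Hence it is the rotation about $i$ by $\pm2\theta$.

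Finally, a nontrivial elliptic rotation fixes only its centre $i$. So $z_\perp$ is fixed exactly when $2\theta\equiv0\pmod{2\pi}$, i.e.\ when $\sigma_c(\lambda)$ is real; otherwise $z_\perp\neq i$ is moved. That $z_\perp\neq i$ for actual states follows from $m>1$: by non-isotropy $T^2-4A_\perp^2\neq0$, and since $m=\cosh d$ this forces $d>0$.
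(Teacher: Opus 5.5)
Your proposal is correct and follows essentially the paper's route. You use the two-dimensional Lagrange identity to turn $(x^2+y^2+1)/(2y)$ into $T/(2A_\perp)$, and you write $\sigma_c(\lambda)=\rho e^{i\theta}$ to split the action on $w=a_\perp+ib_\perp$ into a scaling that fixes $z_\perp$ and a frame rotation that acts as the elliptic isometry about $i$. Your Gram-matrix/$\mathrm{SL}_2$-congruence argument and the trace/determinant invariance of $m$ supply the justification, including the angle doubling, that the paper only asserts.

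One small correction: $z_\perp\in\mathbb{H}$ rests on $A_\perp=\delta/(2\lvert\xi\rvert)>0$, i.e.\ on $\delta\neq0$ alone. Non-isotropy does not give $A_\perp>0$; it gives $m>1$, and hence $z_\perp\neq i$.
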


\begin{proof}
The two-dimensional Lagrange identity
$\langle u,v\rangle^2+\det(u,v)^2=\lvert u\rvert^2\lvert v\rvert^2$ gives, for
$z=x+iy$ as defined, $(x^2+y^2+1)/(2y)=(\lvert a_\perp\rvert^2+\lvert
b_\perp\rvert^2)/(2A_\perp)=m$, and $\cosh d_{\mathbb H}(z,i)=(x^2+y^2+1)/(2y)$.
For the unit action, $u\mapsto\lambda u$ multiplies $\nu$ by
$\sigma_c(\lambda)$ and $\xi$ by $\sigma_r(\lambda)$, hence multiplies the
complex vector $w=a_\perp+ib_\perp$ by $\sigma_c(\lambda)$. Writing
$\sigma_c(\lambda)=\rho e^{i\theta}$, the modulus scales $T$ and $A_\perp$
by $\rho^2$ (so $m$ is unchanged), while the rotation part mixes the frame:
$a_\perp'=\rho(\cos\theta\,a_\perp-\sin\theta\,b_\perp)$,
$b_\perp'=\rho(\sin\theta\,a_\perp+\cos\theta\,b_\perp)$. The
quantities $\lvert a_\perp\rvert^2$ and
$\langle a_\perp,b_\perp\rangle$ are therefore not individually scaled when
$\theta\ne0$, and the M\"obius action of the rotation on
$z_\perp$ is the elliptic isometry fixing $i$: the distance to $i$, hence
$m=\cosh d_{\mathbb H}(z_\perp,i)$, is preserved, while $z_\perp$ moves
along the hyperbolic circle of that radius about $i$.
\end{proof}

\begin{proposition}[deviation form]\label{prop:pseudo}
Let $\bar\zeta:=\langle\xi,\nu\rangle/\lvert\xi\rvert^2$ and
$w_i:=\sigma_c(u_i)-\bar\zeta\,x_i$, so that $\sum_i x_iw_i=0$. Then
$\nu_\perp=(w_1,w_2,w_3)$, and with $T=\sum\lvert w_i\rvert^2$,
$V:=\sum w_i^2$:
\[
4A_\perp^2=T^2-\lvert V\rvert^2, \qquad
m=\frac{1}{\sqrt{1-\rho^2}},\quad \rho:=\frac{\lvert V\rvert}{T}\in[0,1).
\]
Moreover $m=1$ iff $V=0$ (balanced configuration of deviations).
\end{proposition}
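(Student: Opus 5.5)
The plan is to reduce everything to the real-orthogonal decomposition already used in Proposition~\ref{prop:conformal}. First I will identify $\nu_\perp$. Since $\xi$ is real, $\langle\xi,\nu\rangle=\langle\xi,a\rangle+i\langle\xi,b\rangle$, so
\[
\bar\zeta=\frac{\langle\xi,\nu\rangle}{\lvert\xi\rvert^2}
=\frac{\alpha_\parallel+i\beta_\parallel}{\lvert\xi\rvert}.
\]
Hence $\bar\zeta\,\xi=(\alpha_\parallel+i\beta_\parallel)\hat\xi$. Therefore $w=\nu-\bar\zeta\xi=a_\perp+ib_\perp$, which is the claimed $\nu_\perp$. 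The orthogonality $\sum_i x_iw_i=\langle\xi,w\rangle=0$ follows, because $a_\perp$ and $b_\perp$ are both real-orthogonal to $\xi$. This step is bookkeeping with the bilinear product and the real vector $\xi$.

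Next I expand the two quadratic quantities in $w=a_\perp+ib_\perp$. With bilinear products,
\[
T=\sum\lvert w_i\rvert^2=\lvert a_\perp\rvert^2+\lvert b_\perp\rvert^2,
\qquad
V=\sum w_i^2=\lvert a_\perp\rvert^2-\lvert b_\perp\rvert^2+2i\langle a_\perp,b_\perp\rangle .
\]
Writing $p=\lvert a_\perp\rvert^2$, $q=\lvert b_\perp\rvert^2$ and $c=\langle a_\perp,b_\perp\rangle$, this gives $T^2-\lvert V\rvert^2=(p+q)^2-(p-q)^2-4c^2=4(pq-c^2)$.

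By the Lagrange identity $pq-c^2=\lvert a_\perp\times b_\perp\rvert^2=A_\perp^2$. This is the same computation as $(\operatorname{tr}G)^2-4\det G$ in the proof of Proposition~\ref{prop:conformal}, applied directly to the perpendicular parts. It yields $4A_\perp^2=T^2-\lvert V\rvert^2$.

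For the formula for $m$, I divide by $T^2$ and get $4A_\perp^2/T^2=1-\rho^2$. Then
\[
m=\frac{T}{2A_\perp}=\frac{1}{\sqrt{1-\rho^2}}.
\]
The only point requiring care, and the mild obstacle, is that $A_\perp>0$ and $T>0$, so that $\rho<1$ and the square root is legitimate.

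I get $A_\perp>0$ from non-isotropy. Theorem~\ref{thm:noniso} together with the identity $\lvert\langle n_1,n_1\rangle\rvert^2=\lvert\xi\rvert^4(T^2-4A_\perp^2)$ gives $T^2>4A_\perp^2$, so it does not by itself give $A_\perp>0$. Instead I will invoke $\det(\xi,a,b)=\lvert\xi\rvert A_\perp$ and $\delta=2\lvert\det(\xi,a,b)\rvert=\lvert\mathrm{disc}(u)\rvert^{1/2}\neq0$, since $u$ is a $\mathbb{Q}$-basis. Hence $A_\perp>0$, and then $T\ge2A_\perp>0$. This places $\rho$ in $[0,1)$.

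Finally, $m=1$ if and only if $\rho=0$, that is, if and only if $V=0$. Unwinding $V=0$ gives $\lvert a_\perp\rvert=\lvert b_\perp\rvert$ and $a_\perp\perp b_\perp$, which is the balanced configuration of the deviations $w_i$.
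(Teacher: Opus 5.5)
Your proof is correct and follows the paper's route: the paper just cites the same Gram computation ($\lvert\langle v,v\rangle\rvert^2=(\operatorname{tr}G)^2-4\det G$, with Lagrange giving $\det G=A_\perp^2$) applied to $\nu_\perp=a_\perp+ib_\perp$, and this is exactly your expansion in $p,q,c$. You also spell out what the paper leaves implicit: the identification of $\nu_\perp$, and the positivity $A_\perp>0$ via $\delta\neq0$ rather than via non-isotropy, which instead yields $V\neq0$ (only note that $\det(\xi,a,b)=\pm\lvert\xi\rvert A_\perp$, up to sign).
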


\begin{proof}
The identity $\lvert\langle v,v\rangle\rvert^2=T^2-4A^2$ for a complex vector
$v$ with real and imaginary parts $(a_\perp,b_\perp)$ is the same Gram
computation as above, applied to $\nu_\perp$; the display follows.
\end{proof}

\begin{proposition}[exactness]\label{prop:exact}
The quantities $\mathrm{score}(s)$, $m(s)^2$ and
$P(s):=\sigma_r(Q)\,\sigma_r(Q^\vee)$ all lie in $\sigma_r(K)$; on integral
states all comparisons between them are decidable exactly.
\end{proposition}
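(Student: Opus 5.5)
Using Theorem~\ref{thm:dual} and Proposition~\ref{prop:conformal}, every quantity in the statement can be written as a field element of $K$ combined with complex conjugation. The one step needing real care is the complex-conjugation bookkeeping.

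First, the easy pieces. Since $Q$ and $Q^\vee$ lie in $K$, we have $\sigma_r(Q),\sigma_r(Q^\vee)\in\sigma_r(K)$, so $P(s)\in\sigma_r(K)$ immediately. Also $\mathrm{disc}(u)\in\mathbb{Q}$. By Theorem~\ref{thm:dual}, $\lvert\mathrm{score}(s)\rvert=\sigma_r(Q)/(\lvert\mathrm{disc}(u)\rvert\,\lvert\sigma_c(Q^\vee)\rvert^2)$, and the sign of the score is fixed (strictly negative by construction), so it suffices to show $\lvert\sigma_c(Q^\vee)\rvert^2\in\sigma_r(K)$.

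The key step is to write $\lvert\sigma_c(Q^\vee)\rvert^2=\sigma_c(Q^\vee)\,\overline{\sigma_c}(Q^\vee)=N_{K/\mathbb{Q}}(Q^\vee)/\sigma_r(Q^\vee)$. Here $Q^\vee\neq0$ by the proof of Theorem~\ref{thm:noniso}, and the norm is the product over the three embeddings $\sigma_r,\sigma_c,\overline{\sigma_c}$. Hence $\lvert\sigma_c(Q^\vee)\rvert^2=\sigma_r\bigl(N(Q^\vee)/Q^\vee\bigr)\in\sigma_r(K)$, and therefore
\[
\lvert\mathrm{score}(s)\rvert=\sigma_r\!\left(\frac{Q\,Q^\vee}{\lvert\mathrm{disc}(u)\rvert\,N_{K/\mathbb{Q}}(Q^\vee)}\right)\in\sigma_r(K).
\]

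Next, $m^2$. Proposition~\ref{prop:conformal} gives $m^2=1+1/\lvert\mathrm{score}\rvert$, and $\sigma_r(K)$ is a field, so $m^2\in\sigma_r(K)$. Explicitly,
\[
m^2=\sigma_r\!\left(1+\frac{\lvert\mathrm{disc}(u)\rvert\,N(Q^\vee)}{Q\,Q^\vee}\right).
\]

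Decidability then follows because every quantity is $\sigma_r(\gamma)$ for an explicitly computable $\gamma\in K$. For integral states, $Q$ and $\mathrm{disc}(u)$ come from integer arithmetic in $L$. The dual basis $u^\vee$ comes from inverting the rational trace matrix $(\mathrm{Tr}(u_iu_j))$, and the norm of an element is a determinant of a rational matrix. A comparison $\sigma_r(\gamma_1)\lessgtr\sigma_r(\gamma_2)$ reduces to the sign of $\sigma_r(\gamma_1-\gamma_2)$. Decide it as follows:
\begin{itemize}
\item test $\gamma_1-\gamma_2=0$ exactly in coordinates;
\item otherwise, isolate the unique real root of the minimal polynomial of $\alpha$ by rational intervals (Sturm sequences);
\item refine with interval arithmetic until the sign is certified, which terminates because the value is nonzero.
\end{itemize}

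The main obstacle is the conjugation step: keeping track that $\sigma_c$ versus $\overline{\sigma_c}$ only affects phases, and that the modulus squared genuinely equals the product of the two complex conjugates. That equality holds because $\overline{\sigma_c(x)}=\overline{\sigma_c}(x)$ for $x\in K$. Everything else is field closure.
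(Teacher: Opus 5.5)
Your proof is correct, but it takes a different route from the paper.

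\textbf{The paper's route.} The paper argues in one line by Galois invariance. Each of the three quantities is a rational expression in the conjugates $\sigma_r(u_i),\sigma_c(u_i),\overline{\sigma_c}(u_i)$ that is symmetric under swapping the two complex places. Hence it lies in the fixed field of that transposition inside the Galois closure, and this fixed field is $\sigma_r(K)$.

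\textbf{Your route.} You use the norm factorization $\sigma_c(x)\,\overline{\sigma_c}(x)=N_{K/\mathbb{Q}}(x)/\sigma_r(x)$ to write down explicit preimages in $K$. You then reach $m^2$ through the conformal identity $m^2=1+1/\lvert\mathrm{score}\rvert$, not from the definition of $m$.

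\textbf{What each buys.} The paper's argument is shorter and uniform. It applies directly to $m^2=T^2/(T^2-\lvert V\rvert^2)$, and to any conjugation-symmetric quantity, without needing Theorem~\ref{thm:dual} or Proposition~\ref{prop:conformal}. It does, however, take for granted that the quantities lie in the Galois closure, and the degree count that identifies the fixed field with $\sigma_r(K)$.

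Your argument never leaves $K$ and yields closed forms:
\[
\lvert\mathrm{score}(s)\rvert=\frac{P(s)}{\lvert\mathrm{disc}(u)\rvert\,N(Q^\vee)},\qquad m(s)^2=1+\frac{\lvert\mathrm{disc}(u)\rvert\,N(Q^\vee)}{P(s)}.
\]
These show in passing that the score and $P$ differ by a positive rational factor. They are also exactly what an exact implementation needs. You additionally spell out the decidability clause, which the paper treats as standard.

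\textbf{A simplification.} Your decision procedure can be shortened using the same identity. For $\gamma\in K$ we have $N(\gamma)=\sigma_r(\gamma)\,\lvert\sigma_c(\gamma)\rvert^2$, and $\sigma_c$ is injective. So the sign of $\sigma_r(\gamma)$, including the case of zero, equals the sign of the rational number $N(\gamma)$. This makes the exact zero test, the Sturm isolation and the interval refinement unnecessary.
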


\begin{proof}
Each is invariant under the transposition of the two complex places, whose
fixed field inside the Galois closure is $\sigma_r(K)$.
\end{proof}

\begin{remark}
$P\ge 1$ always (Cauchy--Schwarz against $\sum_i\sigma_r(u_i)\sigma_r(u_i^\vee)
=(M^{-1}M)_{rr}=1$), $P$ is scaling-invariant, and $P=\sec^2\theta$ where
$\theta$ is the angle between $\xi$ and the normal to the plane
$\mathrm{span}(a,b)$; since $\delta=2\lvert\langle\xi,a\times b\rangle\rvert$ is
pinned, $P\le C$ bounds the \emph{product} $\lvert\xi\rvert\cdot\lvert a\times
b\rvert$. These facts are used in the descent analysis of \S4.
\end{remark}

\subsection{Unification of the signatures}

\begin{proposition}\label{prop:unif}
In the totally real case, with $\eta_2,\eta_3$ the second and third real
embedding vectors, the same Gram computation yields
\[
\sin^2\alpha=\frac{\det(\xi,\eta_2,\eta_3)^2\,\lvert\xi\rvert^2}{\lvert\xi\times\eta_2\rvert^2\,\lvert\xi\times\eta_3\rvert^2}:
\]
maximizing
$\sin^2$ is minimizing $\lvert\xi\rvert\cdot\lvert\eta_{2,\perp}\rvert\cdot
\lvert\eta_{3,\perp}\rvert$. The two signatures thus run \emph{one} rule ---
minimize the transverse size of the other places relative to the real one ---
the complex case replacing the separated product by the $U(1)$-invariant
conformal module.
\end{proposition}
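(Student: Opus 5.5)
The plan is to work with the Gram matrix of the three real embedding vectors $\xi,\eta_2,\eta_3$, as in the proof of Proposition~\ref{prop:conformal}. I take the totally real $\sin^2$ of the companion paper/\cite{Ka22} to be the squared sine of the angle $\alpha$ between the two planes $\mathrm{span}(\xi,\eta_2)$ and $\mathrm{span}(\xi,\eta_3)$. This is the dihedral angle along the common line $\xi$, so $\alpha$ is the angle between the normals $n_2:=\xi\times\eta_2$ and $n_3:=\xi\times\eta_3$. Hence
\[
\sin^2\alpha=\frac{\lvert n_2\times n_3\rvert^2}{\lvert n_2\rvert^2\lvert n_3\rvert^2}.
\]

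\textbf{Step 1: the numerator.} Apply the vector identity $(\xi\times\eta_2)\times(\xi\times\eta_3)=\det(\xi,\eta_2,\eta_3)\,\xi$. This is the same Lagrange step used for $\det G$ in Proposition~\ref{prop:conformal}. It gives $\lvert n_2\times n_3\rvert^2=\det(\xi,\eta_2,\eta_3)^2\lvert\xi\rvert^2$, which is exactly the displayed formula.

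\textbf{Step 2: the reading as a minimization.} Parallel components drop out of cross products, so $\lvert\xi\times\eta_j\rvert=\lvert\xi\rvert\,\lvert\eta_{j,\perp}\rvert$. Also $\det(\xi,\eta_2,\eta_3)^2=\mathrm{disc}(u)$ is orbit-invariant: moves are unimodular, as with $\delta$ in the complex case. Combining these,
\[
\sin^2\alpha=\frac{\mathrm{disc}(u)}{\lvert\xi\rvert^2\lvert\eta_{2,\perp}\rvert^2\lvert\eta_{3,\perp}\rvert^2}.
\]
Since $\mathrm{disc}(u)$ is invariant, maximizing $\sin^2\alpha$ over candidates is the same as minimizing $\lvert\xi\rvert\,\lvert\eta_{2,\perp}\rvert\,\lvert\eta_{3,\perp}\rvert$.

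\textbf{Step 3: comparison with the complex case.} Take the complex formula of Proposition~\ref{prop:conformal}, $\lvert\mathrm{score}\rvert=4\lvert\xi\rvert^4A_\perp^2/(\lvert\xi\rvert^4(T^2-4A_\perp^2))$. Here $\lvert\xi\rvert A_\perp=\lvert\det(\xi,a,b)\rvert$ is pinned. The rule therefore again reduces to a transverse-size functional of the other places. The only difference is that the separated product $\lvert\eta_{2,\perp}\rvert\lvert\eta_{3,\perp}\rvert$ is replaced by the $U(1)$-invariant combination $T^2-4A_\perp^2$, which gives $m$. This is a remark; I would state it as interpretation rather than prove anything further.

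\textbf{Main obstacle.} The identities are routine; the real issue is matching conventions. I must confirm that the companion paper's $\sin^2$ is indeed the angle between these two planes (equivalently, between the normals), and not an angle between vectors or with a different normalization. If instead it is defined through a determinant divided by the two normal lengths, Step 1 collapses to the same expression directly. I would first check this definition before committing to the formula.
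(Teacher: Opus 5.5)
Your proposal is correct and is essentially the paper's argument. The paper's terse proof (Binet--Cauchy on each pair $(\xi,\eta_j)$, giving $\lvert\xi\times\eta_j\rvert=\lvert\xi\rvert\,\lvert\eta_{j,\perp}\rvert$, with the mixed Gram terms cancelling as in Proposition~\ref{prop:conformal}) is exactly your Steps~1--2, and your identity $(\xi\times\eta_2)\times(\xi\times\eta_3)=\det(\xi,\eta_2,\eta_3)\,\xi$ makes that cancellation explicit. Your reading of $\alpha$ as the dihedral angle along $\xi$ is consistent with the complex case (in absolute value, the companion's closed form for the score is the same quotient with $(\eta_2,\eta_3)$ replaced by $(\nu,\bar\nu)$), and your appeal to the orbit-invariance of $\det(\xi,\eta_2,\eta_3)^2$ fills in a step the paper leaves implicit.
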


\begin{proof}
Apply the Binet--Cauchy identity of Theorem~\ref{thm:covol} to each of the
two real pairs $(\xi,\eta_2)$, $(\xi,\eta_3)$ separately and take the
product; the mixed Gram terms cancel exactly as in
Proposition~\ref{prop:conformal}, leaving the stated quotient.
\end{proof}

\section{The geometry of moves}\label{sec:geom}

\subsection{Slopes and the unit spiral}

Define the \emph{slope} of $u\in K^\times$ as $\zeta(u):=\sigma_c(u)/\sigma_r(u)
\in\mathbb{C}$, and the slope cloud of a state as $(\zeta_1,\zeta_2,\zeta_3)$.

\begin{proposition}[moves are real affine barycenters]\label{prop:bary}
Let $V=(v_{ij})\in\mathrm{GL}_3(\mathbb{Z})$ be an admissible move, producing
$u_i'=\sum_j v_{ij}u_j$. Then
\[
\zeta(u_i')=\sum_j \pi_{ij}\,\zeta_j,\qquad
\pi_{ij}:=\frac{v_{ij}x_j}{x_i'},\qquad \sum_j\pi_{ij}=1,\ \pi_{ij}\in\mathbb{R}.
\]
Each new slope is an affine (in general non-convex) barycenter of the old
slopes with \emph{real} weights; the window constraints control only the
denominators $x_i'>0$.
\end{proposition}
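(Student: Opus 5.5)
The plan is a direct computation from the definitions, using only that $\sigma_r$ and $\sigma_c$ are additive and fix the integers $v_{ij}$.

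First, apply both embeddings to $u_i'=\sum_j v_{ij}u_j$. Since $v_{ij}\in\mathbb{Z}$ is fixed by every embedding, additivity gives
\[
x_i'=\sigma_r(u_i')=\sum_j v_{ij}x_j,
\qquad
\sigma_c(u_i')=\sum_j v_{ij}\sigma_c(u_j).
\]
Admissibility of the move means the window (positivity) constraints hold, so $x_i'>0$, and the quotient $\zeta(u_i')=\sigma_c(u_i')/x_i'$ is defined.

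Second, rewrite each $\sigma_c(u_j)$ as $\zeta_jx_j$; this is legitimate because $x_j>0$ for an admissible state. Dividing by $x_i'$ then gives
\[
\zeta(u_i')=\sum_j\frac{v_{ij}x_j}{x_i'}\,\zeta_j=\sum_j\pi_{ij}\zeta_j .
\]
The weights $\pi_{ij}$ are real, being quotients of real numbers. They sum to $1$ because $\sum_j v_{ij}x_j=x_i'$ is precisely the first display.

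Third, the qualifiers in the statement need only brief remarks. Non-convexity occurs because the entries $v_{ij}$ may be negative (for example in $V(A,B,g)$ or $W$), so some $\pi_{ij}$ can be negative. The only analytic condition used above is $x_i'>0$, which is exactly what the window constraints supply; no constraint on the slopes themselves was needed.

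There is no real obstacle here. The only point requiring care is to invoke positivity of both the old coordinates $x_j$ and the new coordinates $x_i'$, so that the slopes are defined before and after the move.
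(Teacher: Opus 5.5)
Your proposal is correct and follows essentially the same route as the paper: apply $\sigma_r$ and $\sigma_c$ linearly to $u_i'=\sum_j v_{ij}u_j$, substitute $\sigma_c(u_j)=x_j\zeta_j$, and divide by $x_i'=\sum_j v_{ij}x_j$. Your added remarks about positivity of the old and new real coordinates, and about negative entries causing non-convexity, are accurate elaborations of what the paper's two-line proof leaves implicit.
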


\begin{proof}
$\sigma_c(u_i')=\sum_j v_{ij}\sigma_c(u_j)=\sum_j v_{ij}x_j\zeta_j$ and
$x_i'=\sum_j v_{ij}x_j$.
\end{proof}

\begin{proposition}[unit spiral]\label{prop:spiral}
Suppose an orbit is eventually periodic: $s^{(t+p)}=\lambda\cdot s^{(t)}$ for
some unit $\lambda\in\mathcal{O}_K^\times$ with $\sigma_r(\lambda)>0$ and all
large $t$. Then the slope clouds satisfy
$\zeta^{(t+p)}=\mu\,\zeta^{(t)}$ with
\[
\mu:=\frac{\sigma_c(\lambda)}{\sigma_r(\lambda)},\qquad
\lvert\mu\rvert=\sigma_r(\lambda)^{-3/2},\qquad
\arg\mu=\arg\sigma_c(\lambda),
\]
i.e.\ the cycle is a $\mu$-self-similar configuration under a fixed logarithmic
spiral, whose rotation number is the argument of the unit at the complex place.
\end{proposition}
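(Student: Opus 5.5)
The plan is to compute the slopes directly from the periodicity relation, using that the embeddings are ring morphisms. The hypothesis $s^{(t+p)}=\lambda\cdot s^{(t)}$ means $u_i^{(t+p)}=\lambda u_i^{(t)}$ for each $i$. Sorting is not disturbed: multiplication by $\lambda$ scales every $x_i$ by $\sigma_r(\lambda)>0$, so the order $x_1\ge x_2\ge x_3$ is preserved and the index correspondence is the identity. For each $i$ we then get
\[
\zeta\bigl(u_i^{(t+p)}\bigr)=\frac{\sigma_c(\lambda)\sigma_c(u_i^{(t)})}{\sigma_r(\lambda)\sigma_r(u_i^{(t)})}=\mu\,\zeta\bigl(u_i^{(t)}\bigr).
\]
This gives $\zeta^{(t+p)}=\mu\,\zeta^{(t)}$ with $\mu=\sigma_c(\lambda)/\sigma_r(\lambda)$ independent of $i$. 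Note that $\sigma_r(\lambda)\neq0$ because $\lambda$ is a unit.

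For the modulus, we use the norm. We have $N_{K/\mathbb{Q}}(\lambda)=\sigma_r(\lambda)\,\sigma_c(\lambda)\,\overline{\sigma_c}(\lambda)=\sigma_r(\lambda)\lvert\sigma_c(\lambda)\rvert^2$. Since $\lambda$ is a unit, this norm is $\pm1$, and because $\sigma_r(\lambda)>0$ and $\lvert\sigma_c(\lambda)\rvert^2>0$ it must equal $+1$. Hence $\lvert\sigma_c(\lambda)\rvert=\sigma_r(\lambda)^{-1/2}$, and therefore
\[
\lvert\mu\rvert=\sigma_r(\lambda)^{-1/2}\cdot\sigma_r(\lambda)^{-1}=\sigma_r(\lambda)^{-3/2}.
\]
The argument statement follows immediately because we divide by the positive real number $\sigma_r(\lambda)$: $\arg\mu=\arg\sigma_c(\lambda)$.

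Finally, iterating gives $\zeta^{(t+kp)}=\mu^k\zeta^{(t)}$. Writing $\mu=\lvert\mu\rvert e^{i\theta}$, the points $\mu^k\zeta_i^{(t)}$ lie on the logarithmic spiral $r=c\,e^{(\log\lvert\mu\rvert/\theta)\phi}$ through $\zeta_i^{(t)}$, with rotation $\theta=\arg\sigma_c(\lambda)$ per period. This is the claimed self-similarity.

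No step is a genuine obstacle. The only point needing care is the sign of the norm, which is forced by the positivity assumption $\sigma_r(\lambda)>0$; without it the norm could be $-1$ and the modulus formula would need $\lvert\sigma_r(\lambda)\rvert$. A minor degenerate remark: if $\sigma_r(\lambda)=1$ then $\lambda$ is a root of unity in a field with a real embedding, so $\lambda=1$ and the spiral degenerates to the identity.
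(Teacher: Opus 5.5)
Your proof is correct and follows the paper's argument: the relation $\zeta(\lambda u)=\mu\,\zeta(u)$ comes straight from multiplicativity of the embeddings, and $\lvert N(\lambda)\rvert=\sigma_r(\lambda)\lvert\sigma_c(\lambda)\rvert^2=1$ gives $\lvert\sigma_c(\lambda)\rvert=\sigma_r(\lambda)^{-1/2}$, hence $\lvert\mu\rvert=\sigma_r(\lambda)^{-3/2}$. Your additional remarks are accurate but not needed for the statement: that sorting is preserved, that the norm is exactly $+1$, and that $\sigma_r(\lambda)=1$ forces $\lambda=1$.
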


\begin{proof}
$\zeta(\lambda u)=\mu\,\zeta(u)$ directly; $\lvert N(\lambda)\rvert=
\sigma_r(\lambda)\lvert\sigma_c(\lambda)\rvert^2=1$ gives
$\lvert\sigma_c(\lambda)\rvert=\sigma_r(\lambda)^{-1/2}$.
\end{proof}

\subsection{The exact covolume package}

Recall the deviations $w_i=\sigma_c(u_i)-\bar\zeta x_i$ of
Proposition~\ref{prop:pseudo}, with $\sum_i x_iw_i=0$.

\begin{theorem}[exact transverse areas]\label{thm:covol}
Write $P_{ij}:=\operatorname{Im}(\bar w_iw_j)$ (the oriented area of the
parallelogram spanned by $w_i,w_j$ in $\mathbb{C}$). Then:
\begin{enumerate}
\item[(a)] (Binet--Cauchy) $A_\perp^2=P_{12}^2+P_{13}^2+P_{23}^2$;
\item[(b)] the constraint $\sum x_iw_i=0$ forces the exact proportionality
\[
(P_{23},\,-P_{13},\,P_{12})=c\,(x_1,\,x_2,\,x_3),\qquad
\lvert c\rvert=\frac{A_\perp}{\lvert\xi\rvert}=\frac{\delta}{2\lvert\xi\rvert^{2}};
\]
\item[(c)] consequently the plane lattice $\mathbb{Z}w_2+\mathbb{Z}w_3\subset
\mathbb{C}$ has covolume exactly
\[
\operatorname{covol}(\mathbb{Z}w_2+\mathbb{Z}w_3)=\lvert P_{23}\rvert
=\frac{\delta\,x_1}{2\lvert\xi\rvert^{2}},
\]
and cyclically for the other two pairs.
\end{enumerate}
\end{theorem}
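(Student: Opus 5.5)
The plan is to treat the deviations $w_i$ as the vector $\nu_\perp=a_\perp+ib_\perp\in\mathbb{C}^3$, that is, as the pair of real vectors $(a_\perp,b_\perp)$, and to read each $P_{ij}$ as a $2\times2$ minor. Writing $w_j=\alpha_j+i\beta_j$ with $\alpha=a_\perp$ and $\beta=b_\perp$, we get $\operatorname{Im}(\bar w_iw_j)=\alpha_i\beta_j-\beta_i\alpha_j$. Hence $(P_{23},-P_{13},P_{12})=(P_{23},P_{31},P_{12})$ is exactly the cross product $a_\perp\times b_\perp$.

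Part (a) then follows at once. By the definition $A_\perp=\lvert a_\perp\times b_\perp\rvert$, the sum of the squares of the three minors is $A_\perp^2$. This is the Binet--Cauchy identity for the $3\times2$ matrix $(a_\perp\mid b_\perp)$.

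For (b), the constraint $\sum_i x_iw_i=0$ from Proposition~\ref{prop:pseudo} says that both $a_\perp$ and $b_\perp$ are orthogonal to $\xi$. Their cross product is therefore parallel to $\xi$, which gives $a_\perp\times b_\perp=c\,\xi$ for a real scalar $c$. This is the proportionality $(P_{23},-P_{13},P_{12})=c(x_1,x_2,x_3)$, and taking norms gives $\lvert c\rvert=A_\perp/\lvert\xi\rvert$.

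The second expression for $\lvert c\rvert$ requires computing $A_\perp$ in terms of $\delta$. Since the parallel components are multiples of $\xi$, we have $\det(\xi,a,b)=\det(\xi,a_\perp,b_\perp)=\langle\xi,a_\perp\times b_\perp\rangle=c\lvert\xi\rvert^2$. With $\delta=2\lvert\det(\xi,a,b)\rvert$ (the convention in the notation section, consistent with the computation $\lvert\det(\xi,\nu,\bar\nu)\rvert^2=4\det(\xi,a,b)^2$ used in Proposition~\ref{prop:conformal}), this gives $\lvert c\rvert=\delta/(2\lvert\xi\rvert^2)$. The same identity also shows $A_\perp=\delta/(2\lvert\xi\rvert)$.

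For (c), the covolume of $\mathbb{Z}w_2+\mathbb{Z}w_3$ in $\mathbb{C}\cong\mathbb{R}^2$ is the absolute value of the determinant of $(w_2,w_3)$ as real vectors, which is $\lvert\operatorname{Im}(\bar w_2w_3)\rvert=\lvert P_{23}\rvert$. By (b) this equals $\lvert c\rvert x_1=\delta x_1/(2\lvert\xi\rvert^2)$. The other two pairs follow cyclically, since $\lvert P_{13}\rvert=\lvert c\rvert x_2$ and $\lvert P_{12}\rvert=\lvert c\rvert x_3$. The argument also shows that the lattice is nondegenerate, because $x_1>0$ and $\delta\neq0$.

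I do not expect a serious obstacle. The only points needing care are bookkeeping:
\begin{itemize}
\item the sign and index convention matching $(P_{23},-P_{13},P_{12})$ to the components of $a_\perp\times b_\perp$, where an orientation slip would only flip the sign of $c$;
\item the factor $2$ in $\delta$, which I will fix using the notation section's convention $\delta=2\lvert\det(\xi,a,b)\rvert$.
\end{itemize}
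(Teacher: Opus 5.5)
Your proof is correct and follows the paper's argument: for (a) you identify $P_{ij}$ with the $2\times2$ minors of $(a_\perp\mid b_\perp)$, and for (c) you use the covolume formula $\lvert\operatorname{Im}(\bar w_2w_3)\rvert$, exactly as the paper does. The only variation is in (b): the paper eliminates $w_1=-(x_2w_2+x_3w_3)/x_1$ to get $P_{12}=(x_3/x_1)P_{23}$ and $P_{13}=-(x_2/x_1)P_{23}$, and quotes $A_\perp=\delta/(2\lvert\xi\rvert)$ from the conformal computation, whereas you obtain the proportionality coordinate-free from $a_\perp,b_\perp\perp\xi$ and derive $A_\perp=\delta/(2\lvert\xi\rvert)$ directly from $\det(\xi,a_\perp,b_\perp)=c\lvert\xi\rvert^2$, which makes your version slightly more self-contained.
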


\begin{proof}
(a) is Binet--Cauchy for the $3\times2$ real matrix with columns
$\operatorname{Re}w$, $\operatorname{Im}w$: the squared norm of the cross
product of the columns is the sum of the squared $2\times2$ minors, and the
$(i,j)$ minor is $\operatorname{Re}w_i\operatorname{Im}w_j-
\operatorname{Re}w_j\operatorname{Im}w_i=\operatorname{Im}(\bar w_iw_j)$.
(b) From $w_1=-(x_2w_2+x_3w_3)/x_1$: $\operatorname{Im}(\bar w_1w_2)=
(x_3/x_1)P_{23}$ and $\operatorname{Im}(\bar w_1w_3)=-(x_2/x_1)P_{23}$, so the
area vector is proportional to $(x_1,x_2,x_3)$ up to signs as displayed; its
norm is $A_\perp$ by (a), which fixes $\lvert c\rvert=A_\perp/\lvert\xi\rvert$,
and $A_\perp=\delta/(2\lvert\xi\rvert)$.
(c) The covolume of a rank-2 lattice in $\mathbb{C}$ spanned by $w_2,w_3$ is
$\lvert\operatorname{Im}(\bar w_2w_3)\rvert$.
\end{proof}

\begin{corollary}[Minkowski input]\label{cor:mink}
$\mathbb{Z}w_2+\mathbb{Z}w_3$ contains a nonzero vector $w$ with
$\lvert w\rvert^2\le \frac{2}{\sqrt3}\cdot\frac{\delta x_1}{2\lvert\xi\rvert^2}
=\frac{\delta x_1}{\sqrt3\,\lvert\xi\rvert^2}$. Since $T=m\delta/\lvert\xi
\rvert$, this vector captures at most a fraction $\frac{x_1}{\sqrt3\,m\,
\lvert\xi\rvert}\le\frac{1}{\sqrt3\,m}$ of the total transverse energy: in the
high-$m$ regime the lattice of deviations always contains a direction
\emph{much shorter} than the energy scale --- the structural source of the
contractions observed and used in \S6.
\end{corollary}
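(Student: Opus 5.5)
The plan is to combine three ingredients: Hermite's constant in dimension two, the exact covolume of Theorem~\ref{thm:covol}(c), and the normalization $T=m\delta/\lvert\xi\rvert$.

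\emph{Step 1 (the short vector).} First observe that $w_2,w_3$ are $\mathbb{R}$-linearly independent in $\mathbb{C}\cong\mathbb{R}^2$. Indeed $\lvert P_{23}\rvert=\delta x_1/(2\lvert\xi\rvert^2)>0$, since $\delta\neq0$ (as $\mathrm{disc}(u)\neq0$) and $x_1>0$. Hence $\Lambda:=\mathbb{Z}w_2+\mathbb{Z}w_3$ is a full-rank lattice in the plane. Hermite's constant in dimension two is $\gamma_2=2/\sqrt3$; equivalently, a Lagrange--Gauss reduced basis satisfies $\lvert w\rvert^2\le\frac{2}{\sqrt3}\operatorname{covol}(\Lambda)$. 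This produces a nonzero $w\in\Lambda$ with $\lvert w\rvert^2\le\frac{2}{\sqrt3}\cdot\frac{\delta x_1}{2\lvert\xi\rvert^2}$, which is the first claim. If a self-contained argument is wanted, take $w$ to be a shortest vector and $w'$ the shortest vector of $\Lambda$ independent of $w$. Reduction then gives $\lvert\langle w,w'\rangle\rvert\le\frac12\lvert w\rvert^2$ and $\lvert w'\rvert\ge\lvert w\rvert$. Consequently $\operatorname{covol}^2=\lvert w\rvert^2\lvert w'\rvert^2-\langle w,w'\rangle^2\ge\frac34\lvert w\rvert^4$.

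\emph{Step 2 (energy normalization).} Next I need $T=m\delta/\lvert\xi\rvert$. By definition $m=T/(2A_\perp)$, so $T=2mA_\perp$. Theorem~\ref{thm:covol}(b) gives $A_\perp=\delta/(2\lvert\xi\rvert)$, and therefore $T=m\delta/\lvert\xi\rvert$. Dividing the bound from Step 1 by $T$ yields
\[
\frac{\lvert w\rvert^2}{T}\le\frac{\delta x_1}{\sqrt3\lvert\xi\rvert^2}\cdot\frac{\lvert\xi\rvert}{m\delta}=\frac{x_1}{\sqrt3\,m\,\lvert\xi\rvert}.
\]
Finally, $x_1\le\lvert\xi\rvert$ because $\lvert\xi\rvert^2=x_1^2+x_2^2+x_3^2$, which gives the last inequality $\le 1/(\sqrt3\,m)$.

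There is no real obstacle here; the proof is a bookkeeping exercise. The one point that needs care is Step 2, specifically the identity $A_\perp=\delta/(2\lvert\xi\rvert)$. It comes from $\det(\xi,a,b)=\det(\xi,a_\perp,b_\perp)=\lvert\xi\rvert\,\lvert a_\perp\times b_\perp\rvert$ up to sign, using $a_\perp,b_\perp\perp\xi$. Together with $\delta=2\lvert\det(\xi,a,b)\rvert$ this gives the identity, and its factor of $2$ has to be tracked consistently so that the constant $1/\sqrt3$ comes out exactly. The phrase ``captures at most a fraction'' is interpretive: it is just the displayed ratio $\lvert w\rvert^2/T$. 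The conclusion ``much shorter in the high-$m$ regime'' is the observation that this ratio tends to $0$ as $m\to\infty$.
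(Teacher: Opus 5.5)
Your proposal is correct and follows the route the paper leaves implicit: Hermite's constant $2/\sqrt3$ in dimension two applied to the exact covolume $\delta x_1/(2\lvert\xi\rvert^2)$ of Theorem~\ref{thm:covol}(c), then $T=2mA_\perp=m\delta/\lvert\xi\rvert$ via $A_\perp=\delta/(2\lvert\xi\rvert)$, and finally $x_1\le\lvert\xi\rvert$. One small point about your optional self-contained aside: the identity $\operatorname{covol}^2=\lvert w\rvert^2\lvert w'\rvert^2-\langle w,w'\rangle^2$ tacitly uses that the shortest vector and the shortest independent vector form a basis of the lattice, which is true in dimension two and is exactly what Lagrange--Gauss reduction supplies.
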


\subsection{Recentering: the Pythagoras identity}

After a move $s\mapsto s'=Vs$, the deviations must be recomputed with respect
to the \emph{new} Rayleigh center $\bar\zeta'=\langle\xi',\nu'\rangle/
\lvert\xi'\rvert^2$.

\begin{proposition}[recentering]\label{prop:recenter}
Let $W_i'=\sigma_c(u_i')-\bar\zeta\,x_i'$ be the deviations of $s'$ computed at
the \emph{old} center, and $T'_{\mathrm{old}}=\sum\lvert W_i'\rvert^2$. Then
\[
T'\;=\;\min_{c\in\mathbb{C}}\sum_i\bigl|\sigma_c(u_i')-c\,x_i'\bigr|^2
\;=\;T'_{\mathrm{old}}-\lvert\Delta\bar\zeta\rvert^{2}\lvert\xi'\rvert^{2},
\qquad
\Delta\bar\zeta=\frac{\langle\xi',\,V\nu_\perp\rangle}{\lvert\xi'\rvert^{2}},
\]
where the pairing in the numerator is the real--bilinear dot product applied
entrywise to the complex vector $V\nu_\perp$. In particular
$T'\le T'_{\mathrm{old}}$ always: recentering can only help, and the exact
drift $\Delta\bar\zeta=\bar\zeta'-\bar\zeta$ is a linear functional of the old
transverse data $\nu_\perp$ alone.
\end{proposition}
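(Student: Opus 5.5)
The plan is to read $T'$ as a least-squares residual and then apply the orthogonal (Pythagoras) decomposition of $\mathbb{C}^3=\mathbb{C}\xi'\oplus(\xi')^\perp$. Here the orthogonality is with respect to the real inner product $\langle\cdot,\cdot\rangle$, extended $\mathbb{C}$-bilinearly, so that the real and imaginary parts are treated separately.

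\textbf{Step 1: $T'$ is a minimum.} By definition $T'=\sum_i|w_i'|^2$ with $w_i'=\sigma_c(u_i')-\bar\zeta'x_i'$ and $\bar\zeta'=\langle\xi',\nu'\rangle/|\xi'|^2$. Split $c=c_1+ic_2$. The function $c\mapsto\sum_i|\sigma_c(u_i')-cx_i'|^2$ decouples as
\[
|a'-c_1\xi'|^2+|b'-c_2\xi'|^2 .
\]
Each term is a real one-variable quadratic, minimized at the orthogonal projection coefficients $c_1=\langle a',\xi'\rangle/|\xi'|^2$ and $c_2=\langle b',\xi'\rangle/|\xi'|^2$. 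These are exactly the real and imaginary parts of $\bar\zeta'$. This gives the first equality, and it also matches the definition of $\nu'_\perp$ from Proposition~\ref{prop:pseudo}.

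\textbf{Step 2: Pythagoras.} For an arbitrary $c$, write
\[
\nu'-c\xi'=(\nu'-\bar\zeta'\xi')+(\bar\zeta'-c)\xi' .
\]
The first summand is orthogonal to $\xi'$ in both its real and imaginary parts. Expanding therefore gives
\[
\sum_i|\sigma_c(u_i')-cx_i'|^2=T'+|\bar\zeta'-c|^2|\xi'|^2 .
\]
Setting $c=\bar\zeta$ yields $T'_{\mathrm{old}}=T'+|\Delta\bar\zeta|^2|\xi'|^2$, which is the displayed identity. The inequality $T'\le T'_{\mathrm{old}}$ is immediate.

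\textbf{Step 3: the drift formula.} Since moves act simultaneously on the real and complex embeddings, $\nu'=V\nu$ and $\xi'=V\xi$. Decompose $\nu=\bar\zeta\xi+\nu_\perp$, so that $\nu'=\bar\zeta\xi'+V\nu_\perp$. Then
\[
\bar\zeta'=\frac{\langle\xi',\nu'\rangle}{|\xi'|^2}=\bar\zeta+\frac{\langle\xi',V\nu_\perp\rangle}{|\xi'|^2}.
\]
This shows that $\Delta\bar\zeta$ is linear in $\nu_\perp$. The map depends on the move, with $\xi'=V\xi$ entering as coefficients, but the only complex data it uses is the old transverse vector $\nu_\perp$.

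The only delicate point is Step 3, and it is a matter of bookkeeping rather than difficulty. One must make sure the bilinear pairing is applied entrywise to the complex vector $V\nu_\perp$, so that no conjugation enters, and that $\bar\zeta$ here is a scalar coefficient, not a conjugate. Beyond checking these conventions, I expect no real obstacle.
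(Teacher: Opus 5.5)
Your proposal is correct and follows essentially the same route as the paper's proof. It identifies $T'$ as the least-squares residual with minimizer $\bar\zeta'$, reads off $T'_{\mathrm{old}}-T'=\lvert\Delta\bar\zeta\rvert^2\lvert\xi'\rvert^2$ as the Pythagorean defect, and derives the drift from $\nu'=\bar\zeta\,\xi'+V\nu_\perp$. The only difference is that you spell out the real/imaginary decoupling and the vanishing cross term $\sum_i x_i'w_i'=0$, which the paper leaves implicit.
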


\begin{proof}
The map $c\mapsto\sum_i\lvert\sigma_c(u_i')-cx_i'\rvert^2$ is a least-squares
problem in $c$; its minimizer is the regression coefficient
$\bar\zeta'$, and the Pythagorean defect of moving the center by
$\Delta\bar\zeta$ is $\lvert\Delta\bar\zeta\rvert^2\lvert\xi'\rvert^2$. For the
drift formula, $\nu'=V\nu=\bar\zeta\,V\xi+V\nu_\perp=\bar\zeta\,\xi'+V\nu_\perp$,
so $\bar\zeta'=\bar\zeta+\langle\xi',V\nu_\perp\rangle/\lvert\xi'\rvert^2$.
\end{proof}

\subsection{Height control and monotonicity}

\begin{proposition}[coordinatewise monotonicity]\label{prop:mono}
For every admissible move of the algorithm, each real coordinate is
non-increasing: $x_i'\le x_i$ for all $i$ (same position). In particular
$\max_i x_i$ is non-increasing along every orbit.
\end{proposition}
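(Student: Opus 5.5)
The proof works directly from the explicit form of the moves. The coefficients of $V(A,B,g)$ and $W$ are nonnegative integers, by the paper's convention $A,B,g\in\mathbb{Z}_{\ge0}$. Each move therefore has the Jacobi--Perron shape: a distinguished row of $V$ is the identity row or subtracts a nonnegative integer combination of the \emph{other} current coordinates from one coordinate, and the remaining rows permute or copy old coordinates. The key step is to write each new real coordinate in the form
\[
x_i' \;=\; x_{\pi(i)} - \sum_{j\neq \pi(i)} c_{ij}\,x_j,\qquad c_{ij}\in\mathbb{Z}_{\ge0},
\]
for a permutation $\pi$, and then to combine this with the window (positivity) constraints $x_j'>0$ and with the re-sorting $x_1'\ge x_2'\ge x_3'$.

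The argument proceeds in two steps.

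\textbf{Step 1 (multiset domination).} For every move we show that the multiset $\{x_1',x_2',x_3'\}$ is obtained from $\{x_1,x_2,x_3\}$ by replacing one entry $x_k$ with $x_k-\sum_{j\ne k}c_jx_j$, where $c_j\ge0$ and the result is positive. Since all $x_j>0$, this replaced entry satisfies $0<x_k'\le x_k$, and the other two entries are unchanged. We do this case by case over the move families $V(A,B,g)$ and $W$ of the companion paper, reading off the signs of the entries: the subtracted part of each affected row consists of nonnegative multiples of positive quantities.

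\textbf{Step 2 (sorting preserves domination).} If $y_k\le x_k$ for every $k$, with the other entries equal, then the decreasingly sorted vectors satisfy $y_{(i)}\le x_{(i)}$ for each $i$. This is the standard fact that the $i$-th order statistic is monotone in each argument. Since the state is sorted before and after the move, this gives $x_i'\le x_i$ at each position. The statement that $\max_i x_i=x_1$ is non-increasing along the orbit is then the case $i=1$, iterated along the orbit.

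The main obstacle is Step 1 for the move $W$ and for those $V(A,B,g)$ whose rows are not literally of the form "old coordinate minus nonnegative combination". Some rows may look like $x_3-(\ldots)$ placed in first position, or may involve $g$ in a way that appears additive. For these we would first try to absorb the row into a permutation composed with a lower-unitriangular matrix with nonpositive off-diagonal entries. If some entry is genuinely positive off the diagonal, we would instead use the window constraint on another new coordinate to bound that entry's contribution, reducing to the dominated multiset form above.
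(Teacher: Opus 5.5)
\textbf{There is a genuine gap in Step~1.} It rests on a structural claim about the moves that is false for the actual move families, and it fails exactly where the proof has content. The moves are not ``old coordinate minus a nonnegative combination of the other old coordinates.'' For $V(A,B,g)$ one has $x_1'=x_1-gx_2+(gB-A)x_3$, and the coefficient $gB-A$ is positive whenever $gB>A$: this is the composed lever of Proposition~\ref{prop:lever}, which the paper stresses is \emph{not} subtractive. For $W$ one has $x_3'=x_3-x_1+x_2$, which carries a $+x_2$ term. No permutation can absorb these signs into your ``lower-unitriangular with nonpositive off-diagonal'' form, because a permutation only reorders rows and the offending row itself has a positive off-diagonal coefficient.

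Both families also change \emph{two} coordinates in general: $x_1'$ and $x_2'=x_2-Bx_3$ for $V$, and $x_1'=x_1-x_2$ and $x_3'$ for $W$. So the ``replace one entry of the multiset'' description is wrong too. The sign reading ``nonnegative multiples of positive old quantities'' therefore does not yield $x_k'\le x_k$. Your fallback, ``use the window constraint on another new coordinate,'' is where the whole argument actually lives, and it is left unexecuted.

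\textbf{The missing step.} Group terms through the new coordinates, whose nonnegativity is exactly the window. For $V$, $x_1'=x_1-g(x_2-Bx_3)-Ax_3=x_1-gx_2'-Ax_3\le x_1$, because $x_2'=x_2-Bx_3\ge0$. For $W$, $x_3'=x_3-(x_1-x_2)=x_3-x_1'\le x_3$, because $x_1'=x_1-x_2\ge0$. The remaining coordinates are unchanged or trivially smaller. This is the paper's proof.

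An equivalent and uniform formulation uses the inverse matrix $C=V^{-1}$ displayed in the proof of Theorem~\ref{thm:hdescent}. It has nonnegative integer entries and unit diagonal, so $x_j=\sum_kC_{jk}x_k'\ge x_j'$ for every $j$, since all $x_k'>0$.

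Once positionwise domination $x_j'\le x_j$ holds for all $j$ (not just one), your Step~2 is correct. It merely upgrades the paper's pre-sorting ``same position'' statement to sorted positions. The claim about $\max_i x_i$ needs no sorting at all, since the maximum is permutation-invariant.
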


\begin{proof}
For $V(A,B,g)$ acting on $(x_1,x_2,x_3)$: the window constraints include
$x_2-Bx_3\ge0$ and the positivity chain of the companion paper, and one
rewrites $x_1'=x_1-g(x_2-Bx_3)-Ax_3\le x_1$, $x_2'=x_2-Bx_3\le x_2$,
$x_3'=x_3$. For $W$: $x_1'=x_1-x_2\le x_1$, $x_2'=x_2$,
$x_3'=x_3-(x_1-x_2)\le x_3$, the window requiring $x_1-x_2\ge 0$ resp.\ the
displayed positivity.
\end{proof}

\begin{proposition}[height decomposition]\label{prop:height}
With $H(s):=\max_i\lvert N(u_i)\rvert$ and $\bar\zeta$ the Rayleigh center,
\[
H(s)\;\le\;2\,\lvert\bar\zeta\rvert^{2}\,x_1^{3}
\;+\;2\,m\,\delta\;\le\;2\,\lvert\bar\zeta\rvert^{2}\,
\lvert\xi\rvert^{3}\;+\;2\,m\,\delta .
\]
Thus along any portion of orbit where $\lvert\bar\zeta\rvert^2\lvert\xi\rvert^3$
and $m$ stay bounded, the arithmetic height stays bounded --- the bridge
between the analytic descent of \S4 and the finiteness theorem of \S5.
\end{proposition}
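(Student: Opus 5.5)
Since $|N(u_i)|=x_i\,|\sigma_c(u_i)|^2$ (norm of a signature-$(1,1)$ element: the real embedding times the squared modulus of the complex one), the plan is to bound each factor $|\sigma_c(u_i)|^2$ through the recentering decomposition of Proposition~\ref{prop:pseudo}. We write $\sigma_c(u_i)=\bar\zeta\,x_i+w_i$, where $w_i$ are the deviations at the Rayleigh center. The elementary inequality $|p+q|^2\le 2|p|^2+2|q|^2$ then gives
\[
|N(u_i)|\;\le\;2\,|\bar\zeta|^2x_i^3+2\,x_i|w_i|^2 .
\]
The first term is at most $2|\bar\zeta|^2x_1^3$ because the state is sorted, $x_i\le x_1$. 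Since $x_1\le|\xi|$, this is in turn at most $2|\bar\zeta|^2|\xi|^3$, which yields the second inequality of the statement.

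For the transverse term we will use $x_i|w_i|^2\le x_1\,T\le|\xi|\,T$ with $T=\sum_j|w_j|^2$. We then invoke the pinned-area identity $T=2mA_\perp$ together with $A_\perp=\delta/(2|\xi|)$ from Theorem~\ref{thm:covol}(b). This is the relation $T=m\delta/|\xi|$ already recorded in Corollary~\ref{cor:mink}, and it gives $x_i|w_i|^2\le|\xi|\cdot m\delta/|\xi|=m\delta$. Taking the maximum over $i$ yields $H(s)\le 2|\bar\zeta|^2x_1^3+2m\delta$.

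The only step that needs care is the transverse bound. The crude estimate $x_i\le|\xi|$ is what makes the $|\xi|$ factors cancel exactly against $A_\perp=\delta/(2|\xi|)$, so no constant depending on the shape of $\xi$ survives. We should also confirm that $A_\perp=\frac12|\det(\xi,a,b)|\cdot 2/|\xi|$ matches the normalization $\delta=2|\det(\xi,a,b)|$. Indeed, $|a_\perp\times b_\perp|=|\det(\hat\xi,a,b)|=|\det(\xi,a,b)|/|\xi|=\delta/(2|\xi|)$, so this is consistent.

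The final remark of the statement is then immediate. Along any stretch of orbit where $|\bar\zeta|^2|\xi|^3$ and $m$ stay bounded, $\delta$ is orbit-invariant, so $H$ is bounded there as well.
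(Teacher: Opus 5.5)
Your proof is correct and follows the paper's argument essentially verbatim: expand $\lvert N(u_i)\rvert=x_i\lvert\bar\zeta x_i+w_i\rvert^2$, apply $\lvert p+q\rvert^2\le2\lvert p\rvert^2+2\lvert q\rvert^2$, use sorting and $x_1\le\lvert\xi\rvert$, and bound the transverse term by $x_1T\le\lvert\xi\rvert\cdot m\delta/\lvert\xi\rvert=m\delta$. Your check that $A_\perp=\lvert\det(\xi,a,b)\rvert/\lvert\xi\rvert=\delta/(2\lvert\xi\rvert)$ agrees with the normalization $\delta=2\lvert\det(\xi,a,b)\rvert$ verifies a step the paper leaves implicit.
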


\begin{proof}
$\lvert N(u_i)\rvert=x_i\lvert\sigma_c(u_i)\rvert^2=
x_i\lvert\bar\zeta x_i+w_i\rvert^2\le
2x_i(\lvert\bar\zeta\rvert^2x_i^2+\lvert w_i\rvert^2)\le
2\lvert\bar\zeta\rvert^2x_1^3+2x_1T$, and $x_1T\le\lvert\xi\rvert\cdot
m\delta/\lvert\xi\rvert=m\delta$, $x_1\le\lvert\xi\rvert$.
\end{proof}

\section{Descent and the conditional theorem}\label{sec:descent}

\begin{figure}[htbp]
\centering
\includegraphics[width=0.66\textwidth]{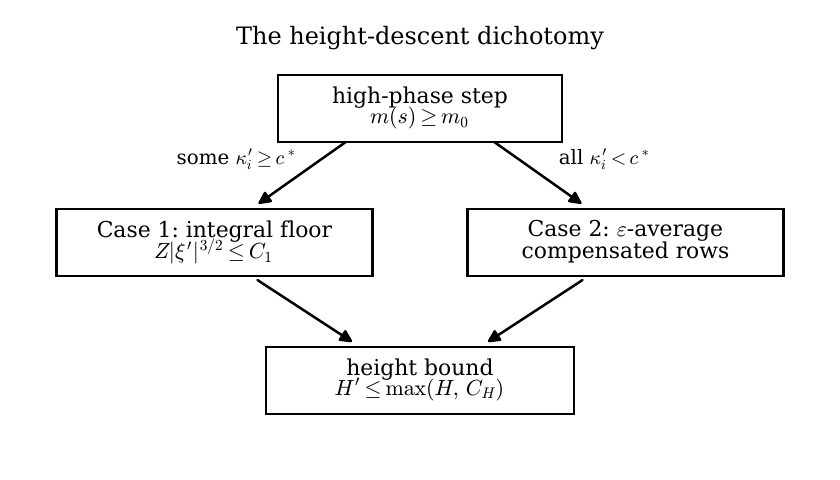}
\caption{The height-descent dichotomy proving (B) from (C$_\kappa$).  A
high-phase step splits on whether some component key-ratio $\kappa_i'$
reaches the threshold $c^*$: Case~1 (integral floor) bounds
$Z|\xi'|^{3/2}$ by an explicit constant, Case~2 ($\varepsilon$-average of
the compensated rows) gives componentwise height descent; either way the
height stays below $\max(H,C_H)$.}
\label{fig:height-descent}
\end{figure}

\subsection{Combinatorial descent}

Fix $\beta\in(0,1)$ and set $\Phi(s):=\log m(s)+\beta\log r(s)$, where
$r=x_1/x_3\ge1$ is the real aspect ratio.

\begin{lemma}[combinatorial descent]\label{lem:D1}
Let $(s_t)$ be an orbit and suppose there are constants $m_0\ge1$, $C\ge1$,
$c_0>0$, $K\in\mathbb{N}$ such that whenever $m(s_t)\ge m_0$:
\begin{enumerate}
\item[(H1)] $\Phi(s_{t+1})\le\Phi(s_t)+\log C$;
\item[(H2)] there exists $k=k(t)\in\{1,\dots,K\}$ with
$\Phi(s_{t+k})\le\Phi(s_t)-c_0$.
\end{enumerate}
Let $t^*=\inf\{t:m(s_t)<m_0\}$. Then
\[
t^*\;\le\;K\,\frac{\Phi(s_0)-\log m_0}{c_0}+K,
\qquad
\sup_{t\le t^*}\log m(s_t)\;\le\;\Phi(s_0)+K\log C .
\]
\end{lemma}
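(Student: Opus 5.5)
The plan is a block-telescoping argument. It rests on the observation that $\Phi$ dominates $\log m$: since $r=x_1/x_3\ge1$ and $\beta>0$, we have $\Phi(s)\ge\log m(s)$ for every state. In particular $\Phi(s_t)\ge\log m_0$ for every $t<t^*$. If $m(s_0)<m_0$ then $t^*=0$ and both bounds are trivial, since $\Phi(s_0)\ge\log m(s_0)$ and $\log C\ge0$. So we assume $m(s_0)\ge m_0$.

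\emph{Step 1 (anchor times).} Define $t_0=0$ and, as long as $t_j<t^*$, set $t_{j+1}:=t_j+k(t_j)$, where $k(t_j)\in\{1,\dots,K\}$ is given by (H2). This is legitimate because $t_j<t^*$ forces $m(s_{t_j})\ge m_0$. By induction $\Phi(s_{t_j})\le\Phi(s_0)-jc_0$ and $t_j\le jK$.

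\emph{Step 2 (termination and the bound on $t^*$).} Whenever $t_j<t^*$ the lower bound gives $\log m_0\le\Phi(s_{t_j})\le\Phi(s_0)-jc_0$, so $j\le(\Phi(s_0)-\log m_0)/c_0$. Hence the construction cannot continue forever, and in particular $t^*<\infty$. Let $J$ be the last index with $t_J<t^*$. Then either $t_{J+1}\ge t^*$, or $t_{J+1}$ is undefined only because $t_{J+1}\ge t^*$; in both cases $t^*\le t_J+K\le JK+K$. Combining with the bound on $J$ gives $t^*\le K(\Phi(s_0)-\log m_0)/c_0+K$.

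\emph{Step 3 (the supremum bound).} Take $t\le t^*$ with $t>0$, and pick $j$ with $t_j<t\le t_{j+1}$, truncating at $t^*$. All times $t_j,\dots,t-1$ are $<t^*$, so each has module $\ge m_0$ and (H1) applies to every step from $t_j$ up to $t$. There are at most $k(t_j)\le K$ such steps, so
\[
\Phi(s_t)\le\Phi(s_{t_j})+K\log C\le\Phi(s_0)+K\log C .
\]
This covers the endpoint $t=t^*$ as well. Finally $\log m(s_t)\le\Phi(s_t)$.

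The only delicate point is bookkeeping. (H1) and (H2) may be invoked only at times where $m\ge m_0$, and the jump $t_j\mapsto t_j+k$ may overshoot $t^*$. Both issues are handled by stopping the anchor sequence at the last index before $t^*$ and absorbing at most one partial block into the additive $+K$ and $K\log C$ terms.
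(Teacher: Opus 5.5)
Your block-telescoping argument is the paper's own proof: anchor times $t_{j+1}=t_j+k(t_j)$ taken from (H2), the count $j\le(\Phi(s_0)-\log m_0)/c_0$ from $\Phi\ge\log m\ge\log m_0$ in the high phase, $t^*$ lying at most $K$ past the last anchor, and at most $K$ applications of (H1) inside a block. Your stopping rule, which keeps anchors only while $t_j<t^*$, is slightly tidier than the paper's ``as long as $m\ge m_0$ at $t_j$''. It makes explicit that (H1) is only invoked at times before $t^*$.

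One claim is wrong: the opening reduction. If $m(s_0)<m_0$ then $t^*=0$, but the first bound is \emph{not} trivial. Its right-hand side $K(\Phi(s_0)-\log m_0)/c_0+K$ is negative as soon as $\Phi(s_0)<\log m_0-c_0$. Nothing excludes this: take $m(s_0)$ well below $m_0$ and $r(s_0)$ near $1$. For an orbit that never enters the high phase, (H1)--(H2) even hold vacuously. In that case the displayed inequality for $t^*$ is false, and your justification ($\Phi(s_0)\ge\log m(s_0)$, $\log C\ge0$) only covers the second bound. The lemma has to be read with the hypothesis $m(s_0)\ge m_0$, which gives $\Phi(s_0)\ge\log m_0$ and a right-hand side of at least $K$. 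The paper's proof tacitly assumes exactly this when it launches the anchor sequence at $t_0=0$ inside the phase. Alternatively, replace the right-hand side by its positive part. With that hypothesis stated instead of dismissing the case as trivial, your proof is complete.
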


\begin{proof}
Define $t_0=0$ and $t_{j+1}=t_j+k(t_j)$ as long as $m\ge m_0$ at $t_j$. By
(H2), $\Phi(s_{t_j})\le\Phi(s_0)-jc_0$. Since $r\ge1$ gives $\beta\log r\ge0$,
we have $\Phi\ge\log m\ge\log m_0$ while the phase lasts; hence
$j\le(\Phi(s_0)-\log m_0)/c_0$ blocks, and $t^*$ exceeds the last $t_j$ by at
most $K$. Inside a block, (H1) applied at most $K$ times yields
$\Phi(s_t)\le\Phi(s_{t_j})+K\log C\le\Phi(s_0)+K\log C$, and $\log m\le\Phi$.
\end{proof}

\subsection{The core hypothesis, the soft rebound, and height recurrence}

\begin{definition}[core contraction]\label{hyp:core}
Say $L$ satisfies \emph{Hypothesis (C$_\kappa$)} if there exist $\gamma>1$,
$m_0\ge1$ and a step bound $\kappa\in\mathbb{N}$ (the subscript of the
hypothesis' name; $\kappa$ is a step count, unrelated to the field $K$)
such that from every admissible state $s$ of
$L$ with $m(s)\ge m_0$, the orbit of the algorithm itself --- the successive
min-$m$ selections, not an auxiliary witness path --- reaches
$m\le\max(m(s)/\gamma,\,m_0)$ within at most $\kappa$ steps. The notation
(C$_1$) denotes the one-step form.
\end{definition}

\begin{remark}[why the dynamic form]\label{rem:greedy}
``Some admissible chain of length $k$ contracts'' transfers to the real orbit
only for $k=1$, where the selection is by definition at least as good as any
witness candidate; for $k\ge2$ the greedy orbit leaves the witness path at
the first step. Certificates must therefore either be one-step, or certify
the greedy orbit itself over each box (the selection is locally constant on
sufficiently small boxes, by exact separation of scores; at algebraic ties,
all tied branches are certified). The empirical record strongly supports the
one-step form away from the collars: every observed high-phase step of the
real dynamics contracts ($370/370$; smallest observed contraction $m/m'=1.37$, and $1.40$ among the $23$ steps with $t\ge3$), and the hard
truncated-compact configurations already descend at $k=1$ (worst ratio $m'/m=0.876$);
multi-step blocks appear to be needed only near the collars.
\end{remark}

\begin{lemma}[soft rebound, unconditional]\label{lem:soft}
For every integral admissible state $s$, the selected move satisfies
\[
m(s')\;\le\;\varphi^2\,m(s),\qquad \varphi^2=\frac{3+\sqrt5}{2}=2.618\ldots,
\]
and the associated old-center energy estimate
$T'_{\mathrm{old}}\le\varphi^2T$ is approached by Fibonacci-type profiles.
\end{lemma}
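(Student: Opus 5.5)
The plan is to use that the selection is a minimization. By Proposition~\ref{prop:conformal} the selected successor $s'$ satisfies $m(s')\le m(\tilde s)$ for every admissible candidate $\tilde s$. It therefore suffices to exhibit one explicit candidate $\tilde s=V\!s$ that is always admissible and whose module is at most $\varphi^2 m(s)$. The natural choice is the elementary transvection $V=V(0,1,0)$, acting by $x_1'=x_1$, $x_2'=x_2-x_3$, $x_3'=x_3$.

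\textbf{Admissibility.} We need $x_2-x_3>0$. Since $\sigma_r$ is injective and $u_2\neq u_3$ (they are part of a basis), we get $x_2\neq x_3$, and sorting gives $x_2>x_3$. I will need to check this against the companion paper's exact window constraints for $V(A,B,g)$. This is the step I expect to be the main obstacle: confirming that this particular move, or some other single transvection $e_i\mapsto e_i-e_j$, is always in the candidate set. If $V(0,1,0)$ is excluded in some configuration, I would fall back on whichever single-transvection move the window always permits. Any re-sorting of coordinates is a permutation, which changes none of the norms used below.

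\textbf{Rewriting $m$ via pinned quantities.} By Theorem~\ref{thm:covol}, $A_\perp=\delta/(2|\xi|)$ with $\delta$ orbit-invariant. Hence
\[
m=\frac{T|\xi|}{\delta},\qquad m(\tilde s)=\frac{\tilde T|\tilde\xi|}{\delta}.
\]
Proposition~\ref{prop:mono} gives $|\tilde\xi|\le|\xi|$. Proposition~\ref{prop:recenter} gives $\tilde T\le T'_{\mathrm{old}}=\sum_i|(V\nu_\perp)_i|^2$, since $\nu'=\bar\zeta\,\xi'+V\nu_\perp$ and the deviations at the old center are exactly $V\nu_\perp$. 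Combining,
\[
\frac{m(\tilde s)}{m(s)}\le\frac{T'_{\mathrm{old}}}{T}
=\frac{\|V a_\perp\|^2+\|V b_\perp\|^2}{\|a_\perp\|^2+\|b_\perp\|^2}\le\|V\|_{\mathrm{op}}^2 .
\]

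\textbf{Operator norm of the shear.} The transvection acts as the identity on one coordinate and as $\begin{pmatrix}1&-1\\0&1\end{pmatrix}$ on the other two. Its Gram matrix $\begin{pmatrix}1&-1\\-1&2\end{pmatrix}$ has largest eigenvalue $(3+\sqrt5)/2=\varphi^2$. This gives $T'_{\mathrm{old}}\le\varphi^2T$, and hence $m(s')\le m(\tilde s)\le\varphi^2 m(s)$.

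\textbf{Sharpness of the energy estimate.} Equality in the operator-norm bound holds when $(a_\perp,b_\perp)$ restricted to the coordinates $2,3$ lies in the top singular direction $(\varphi,-1)$ of the shear, up to sign. Consecutive Fibonacci ratios approach this direction, subject to the constraint $\sum x_iw_i=0$, which can be absorbed through the first coordinate. I would therefore build profiles with $w_2/w_3\approx-F_{n+1}/F_n$ to show that the estimate $T'_{\mathrm{old}}\le\varphi^2T$ is approached.
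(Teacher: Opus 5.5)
Your proof of the inequality $m(s')\le\varphi^2 m(s)$ is correct. It is the paper's argument with the tail difference $V(0,1,0)$ ($u_2\mapsto u_2-u_3$) in place of the head difference $V(0,0,1)$ ($u_1\mapsto u_1-u_2$). The two shears have the same Gram block $\bigl(\begin{smallmatrix}1&-1\\-1&2\end{smallmatrix}\bigr)\oplus(1)$, so nothing else changes. The admissibility step you flag is no harder than in the paper's version:
\begin{itemize}
\item the windows are positivity constraints on the new real coordinates;
\item the paper itself treats $V(0,q,0)$ as admissible for $0\le q\le\lfloor x_2/x_3\rfloor$ (Lemma~\ref{lem:Eb2}) and lists $V(0,1,0)$ as the $(2,3)$ difference move (Lemma~\ref{lem:corner});
\item injectivity of $\sigma_r$ gives $x_2>x_3$, just as the paper gets $x_1>x_2$.
\end{itemize}

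The sharpness clause, however, fails as written. The vector $(\varphi,-1)$ is the top \emph{left} singular vector of $\bigl(\begin{smallmatrix}1&-1\\0&1\end{smallmatrix}\bigr)$, i.e.\ an eigenvector of $VV^{\mathsf T}$, not the input direction. For $w_1=0$, $w_2=-\varphi w_3$ one gets $T'_{\mathrm{old}}/T=3\varphi^2/(\varphi^2+1)=3\varphi/\sqrt5\approx2.17$, and a nonzero $w_1$ only lowers this. So your profiles with $w_2/w_3\approx-F_{n+1}/F_n$ stay bounded away from $\varphi^2$.

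The right tool is the analogue of the paper's sum-of-squares identity, which for your candidate reads $\varphi^3T-\varphi T'_{\mathrm{old}}=\varphi^2\lvert w_1\rvert^2+\lvert\varphi w_2+w_3\rvert^2$. Its equality set is $w_1=0$, $w_3=-\varphi w_2$. This also shows that the constraint $\sum x_iw_i=0$ cannot simply be absorbed into $w_1$. The extremal ray needs $w_1=0$, and then the constraint becomes $w_2(x_2-\varphi x_3)=0$. Near-equality therefore forces the \emph{real} ratio $x_2/x_3\to\varphi$, or, for your candidate only, a stretched head $x_1/x_2\to\infty$ that makes the forced $w_1$ negligible. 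The Fibonacci approximation belongs on $x_2/x_3$, not on the deviation ratio, which can be taken exactly.

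The paper does the same with $\varphi^3T-\varphi T'_0=\lvert\varphi w_1+w_2\rvert^2+\varphi^2\lvert w_3\rvert^2$. This gives the ray $w_2=-\varphi w_1$, $w_3=0$, hence $x_1/x_2=\varphi$, which integral states approach through consecutive-Fibonacci head ratios.
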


\begin{proof}
Integrality forces $x_1>x_2$ strictly (as in Lemma~\ref{lem:gap}), so the
head-difference candidate $V(0,0,1)$, with new head $u_1-u_2$, is admissible.
At the old center its transverse energy is
$T'\le\lvert w_1-w_2\rvert^2+\lvert w_2\rvert^2+\lvert w_3\rvert^2$, a
quadratic form in $(w_1,w_2,w_3)$ whose matrix is
$\bigl(\begin{smallmatrix}1&-1\\-1&2\end{smallmatrix}\bigr)\oplus(1)$ on each
real coordinate; its largest eigenvalue solves $\lambda^2-3\lambda+1=0$,
i.e.\ $\lambda_{\max}=(3+\sqrt5)/2=\varphi^2$. Hence
$T'\le\varphi^2T$ (dropping the constraint $\sum x_iw_i=0$ only enlarges the
supremum), recentering only helps (Proposition~\ref{prop:recenter}), and
$\lvert\xi'\rvert\le\lvert\xi\rvert$ coordinatewise
(Proposition~\ref{prop:mono}); so $m(\text{candidate})\le\varphi^2m$, and the
min-$m$ selection does at least as well. The constant in the old-center
energy estimate is optimal for this quadratic form: writing
$T'_0:=\lvert w_1-w_2\rvert^2+\lvert w_2\rvert^2+\lvert
w_3\rvert^2$, one has the polynomial identity in
$\mathbb{Q}(\sqrt5)$ (componentwise in $\mathrm{Re},\mathrm{Im}$)
\[
\varphi^{3}T-\varphi T'_0=\lvert\varphi w_1+w_2\rvert^{2}+\varphi^{2}\lvert w_3\rvert^{2},
\]
verified by expanding with $\varphi^2=\varphi+1$ (so
$\varphi^3-\varphi=\varphi^2$ and $\varphi^3-2\varphi=1$). Both terms
vanish exactly on the ray $w_2=-\varphi w_1$, $w_3=0$; the transversality
constraint $\sum x_iw_i=0$ then forces $x_1/x_2=\varphi$, which integral
states approach through consecutive-Fibonacci head ratios --- whence the
energy constant is approached, not attained, by Fibonacci-type profiles.
This is not an optimality statement for the actual module rebound: the
recentring term and the factor $\lvert\xi'\rvert/\lvert\xi\rvert$ can only
lower the selected module. The archived case-machine optimization observes
a worst module rebound about $2.081$, strictly below $\varphi^2$; this
number is empirical and is not used in the proof.
\end{proof}

\begin{definition}[height recurrence]\label{hyp:B}
Say an orbit $(s_t)$ satisfies \emph{Hypothesis (B)} if
$\liminf_{t\to\infty}H(s_t)<\infty$, i.e.\ some height bound $B$ is attained
at infinitely many times.
\end{definition}

Hypothesis (C$_\kappa$) is the quantitative engine (\S6--7 reduce its compact
part to a fixed certifiable minimax and isolate the remaining stretched and
collar subcases); Hypothesis (B) rules out
escape of the one mode that the selection cannot see
(Remark~\ref{rem:zetabar}) --- and is \emph{proved} from (C$_\kappa$) in
Theorem~\ref{thm:hdescent} below, so that it never needs to be assumed
independently. Everything else is unconditional.

\begin{theorem}[periodicity, per field]\label{thm:main}
Assume $L$ satisfies (C$_\kappa$) for some step bound $\kappa$, and let $s_0$ be an admissible
initial state whose orbit satisfies (B). Then the orbit is eventually periodic: there
exist $t_0$, $P\ge1$ and a unit $\lambda\in\mathcal{O}_K^\times$ with
$\sigma_r(\lambda)>0$ such that $s_{t+P}=\lambda\,s_t$ for all $t\ge t_0$,
and the digit sequence is eventually $P$-periodic.
\end{theorem}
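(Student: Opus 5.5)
The plan is a pigeonhole on unit classes: an infinite set of times at which the state lies in a set with finitely many unit classes, followed by a projective return and the propagation lemma of the companion paper. The set will be cut out by a module bound and a height bound, so that the finiteness theorem of \S\ref{sec:finiteness} (Theorem~\ref{thm:finiteness}) applies at the fixed coordinate discriminant. The discriminant is fixed along the orbit because every move is unimodular, so $\delta$ is an orbit invariant.

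\emph{Step 1: eventual module bound.} First I show that $m(s_t)$ is bounded along the orbit by a constant $M$ depending only on $s_0$, $\gamma$, $m_0$ and $\kappa$. Suppose $m(s_t)\ge m_0$. By (C$_\kappa$) there is a time $t+k$ with $k\le\kappa$ at which $m\le\max(m(s_t)/\gamma,m_0)$. On the intermediate steps, the soft rebound Lemma~\ref{lem:soft} gives $m\le\varphi^{2\kappa}m(s_t)$. Iterating these contraction blocks has two consequences. The orbit enters $\{m<\gamma m_0\}$ (or $m\le m_0$) after finitely many blocks. Once there, a high excursion can only start from a value at most $\varphi^2\gamma m_0$, since one step multiplies by at most $\varphi^2$. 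That excursion then contracts again within $\kappa$ steps, and in between $m$ stays below $\varphi^{2\kappa+2}\gamma m_0$. Hence $\sup_t m(s_t)\le M:=\max\bigl(\varphi^{2\kappa}m(s_0),\,\varphi^{2\kappa+2}\gamma m_0\bigr)$. Lemma~\ref{lem:D1} is not needed in this form; the block bookkeeping alone suffices.

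\emph{Step 2: pigeonhole.} By (B) there is a height bound $B$ and an infinite set of times $t_1<t_2<\dots$ with $H(s_{t_j})\le B$. At each of these times $m(s_{t_j})\le M$. By Theorem~\ref{thm:finiteness}, the admissible states of $L$ with module at most $M$, height at most $B$ and coordinate discriminant $\delta^2$ lie in finitely many classes under multiplication by units $\lambda$ with $\sigma_r(\lambda)>0$. So two times $t_i<t_j$ satisfy $s_{t_j}=\lambda s_{t_i}$. Since the tie-breaking order is fixed, I also need the sorting convention to be preserved. Multiplying by $\lambda$ with $\sigma_r(\lambda)>0$ keeps admissibility and the order $x_1\ge x_2\ge x_3$, so equality as ordered triples holds.

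\emph{Step 3: propagation.} The algorithm is equivariant under unit multiplication. The candidate set is defined by integer matrices and positivity of real coordinates, and both are preserved when $\sigma_r(\lambda)>0$. The selection criterion $m$ is unit-invariant by Proposition~\ref{prop:hyperbolic}, and exact comparisons are available by Proposition~\ref{prop:exact}, so the deterministic tie-break sees scaled data and makes the same choice. Hence the moves from $\lambda s$ are the same as the moves from $s$. This is precisely the propagation lemma of the companion paper. It yields $s_{t+P}=\lambda s_t$ for all $t\ge t_0:=t_i$ with $P:=t_j-t_i$, and the digit sequence is then $P$-periodic from $t_0$ on.

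The main obstacle is the finiteness input in Step 2. One must check that Theorem~\ref{thm:finiteness} is applicable as stated: at fixed discriminant, bounded $m$ and $H$ give finitely many unit classes, and these are classes of \emph{ordered} triples under positive units rather than merely of lattices. If the theorem only gives finiteness of the underlying $\mathbb{Z}$-modules, I would add the observation that a given module has finitely many admissible sorted bases with bounded heights (the entries are integral elements of bounded norm), which restores the pigeonhole. The module bound of Step 1 is routine given Lemma~\ref{lem:soft}.
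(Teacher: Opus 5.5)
Your proof is correct and follows the paper's own three-step argument: block bookkeeping with the soft rebound (Lemma~\ref{lem:soft}) for a uniform module cap, a pigeonhole in $S(m^*,B,\delta(s_0))$ via Theorem~\ref{thm:finiteness} using the orbit-invariance of $\delta$, and then the companion paper's propagation lemma. Your cap $\varphi^{2\kappa+2}\gamma m_0$ is only looser than the paper's $\varphi^{2\kappa}\max(m(s_0),m_0)$, and the contingency in your last paragraph is unnecessary, since Theorem~\ref{thm:finiteness} is already stated for admissible sorted triples modulo $\langle\eta\rangle$ with $\delta$ pinned. That fallback's stated reason would not suffice on its own anyway: a bound on each entry's norm controls that entry only up to its own unit, not up to a common one.
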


\begin{proof}
\emph{Step 1 (uniform module bound).} Set
$m^*:=\varphi^{2\kappa}\max(m(s_0),\,m_0)$. By induction on blocks: a
(C$_\kappa$)-block launched at a state of module $\mu\ge m_0$ has at most
$\kappa-1$ nonterminal steps, so Lemma~\ref{lem:soft} caps its
intermediate excursion by $\varphi^{2(\kappa-1)}\mu$, and it terminates at
$\max(\mu/\gamma,m_0)\le\mu$; below $m_0$, one rebound step reaches at
most $\varphi^{2}m_0$. Block starts therefore never exceed
$\max(m(s_0),\varphi^{2}m_0)$, and every value along the orbit is at most
$\varphi^{2(\kappa-1)}\max(m(s_0),\varphi^{2}m_0)\le
\varphi^{2\kappa}\max(m(s_0),m_0)=m^*$. Hence $m(s_t)\le m^*$ for all $t$.

\emph{Step 2 (recurrent finite set).} By (B) there are $B<\infty$ and an
infinite sequence $t_1<t_2<\cdots$ with $H(s_{t_k})\le B$. Then
$s_{t_k}\in S(m^*,B,\delta(s_0))$ for all $k$ ($\delta$ is invariant along the orbit, \S3), and by Theorem~\ref{thm:finiteness} this
set has finitely many $\langle\eta\rangle$-classes: some class repeats,
$s_{t_{k'}}=\eta^j\,s_{t_k}$ with $t_{k'}>t_k$, $j\in\mathbb{Z}$.

\emph{Step 3 (propagation).} $\sigma_r(\eta^j)>0$, and the algorithm
commutes with multiplication by units of positive real embedding: the
candidate windows correspond bijectively, all comparisons (exact, by
Proposition~\ref{prop:exact}) are invariant, and so is the declared
tie-breaking order, which depends only on the move labels $(A,B,g)$ and the
$W$-priority, not on the scale --- the propagation lemma of the companion
paper. Hence $s_{t+P}=\eta^js_t$ for all $t\ge t_k$ with
$P:=t_{k'}-t_k$, and the digit sequence is eventually $P$-periodic, with
certificate unit $\lambda=\eta^j$.
\end{proof}

\begin{corollary}[dichotomy]\label{cor:dicho}
Under (C$_\kappa$) alone, every orbit is either eventually periodic or satisfies
$H(s_t)\to\infty$. (This intermediate statement is superseded by
Theorem~\ref{thm:hdescent} below, which rules out the second alternative
altogether; we keep it as it isolates exactly what the finiteness argument
alone delivers.)
\end{corollary}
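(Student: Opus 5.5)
The plan is to prove the dichotomy by contraposition: assume (C$_\kappa$) and that the orbit does \emph{not} satisfy $H(s_t)\to\infty$, and show that the orbit is eventually periodic. The negation of $H(s_t)\to\infty$ is exactly $\liminf_{t\to\infty}H(s_t)<\infty$. This says that some bound $B$ satisfies $H(s_t)\le B$ for infinitely many $t$, which is Hypothesis~(B) of Definition~\ref{hyp:B}. So the two alternatives are exhaustive once we know that (B) forces periodicity. Their overlap is harmless: a periodic orbit has $H$ bounded along the cycle, since $\lvert N(\lambda u)\rvert=\lvert N(u)\rvert$, so it cannot also have $H\to\infty$.

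The key step is then a direct invocation of Theorem~\ref{thm:main}. Under (C$_\kappa$) together with (B) for the orbit of $s_0$, that theorem gives $t_0$, $P\ge1$ and a unit $\lambda$ with $\sigma_r(\lambda)>0$ such that $s_{t+P}=\lambda s_t$ for $t\ge t_0$. I will check that the ingredients of Theorem~\ref{thm:main} are used only as stated and require nothing beyond (C$_\kappa$) and (B):
\begin{itemize}
\item the uniform module bound $m^*$, which follows from Lemma~\ref{lem:soft} and (C$_\kappa$);
\item finiteness of $S(m^*,B,\delta(s_0))$ modulo units (Theorem~\ref{thm:finiteness});
\item the propagation step, which rests on unit-equivariance of the selection and on Proposition~\ref{prop:exact}.
\end{itemize}
In particular the bound $m^*$ depends only on $m(s_0)$, $m_0$ and $\kappa$, and not on the height. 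This means the finiteness set is genuinely fixed once $B$ is chosen from the $\liminf$.

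The only point needing care is the logical form of the negation. ``$H(s_t)\to\infty$'' means that for every $B$, eventually $H(s_t)>B$. Its failure yields some $B$ with $H(s_t)\le B$ infinitely often, which is precisely the infinite sequence $t_1<t_2<\cdots$ used in Step~2 of Theorem~\ref{thm:main}. I expect no real obstacle. The statement is a repackaging of Theorem~\ref{thm:main}: the substantive content (the module cap and the pigeonhole over unit classes) is already there. The remark that it is superseded by Theorem~\ref{thm:hdescent} needs no proof here.
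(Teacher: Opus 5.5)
Your proposal is correct and is the same argument the paper intends, which it leaves implicit: the failure of $H(s_t)\to\infty$ is exactly $\liminf_t H(s_t)<\infty$, i.e.\ Hypothesis~(B), so Theorem~\ref{thm:main} applies and gives eventual periodicity. Your added check that the two alternatives are mutually exclusive, because multiplication by a unit with $\sigma_r>0$ preserves sorting and each $\lvert N(u_i)\rvert$, is also correct, though the corollary as stated does not need it.
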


\begin{remark}[the free mode, and evidence for (B)]\label{rem:zetabar}
The recentering drift $\Delta\bar\zeta$ is a functional of the transverse
data $\nu_\perp$ alone (Proposition~\ref{prop:recenter}): the selection,
which optimizes a function of $\nu_\perp$ and $\xi$, is \emph{blind} to the
current size of $\bar\zeta$ --- the barycenter is a free mode, and (B) is
precisely the statement that it does not run away, since by
Proposition~\ref{prop:height} the height is controlled by
$\lvert\bar\zeta\rvert$, $\lvert\xi\rvert$ (non-increasing) and $m$
(uniformly bounded, Step 1). Three facts support (B), and a fourth is cautionary. (i) Every certified
periodic orbit of the companion paper satisfies it trivially. (ii) On all
corpus orbits the height oscillates in a narrow band with no drift (e.g.\
within $[1,14]$ over $40$-step runs on three fields), and
$P=\sigma_r(Q)\sigma_r(Q^\vee)$ stays in $[1,2.2]$. (iii) On a
$42$-orbit stress campaign (six fields; initial states including non-units
and coordinates of height up to $6\times10^8$), $25$ orbits closed
projectively with pre-period at most $10$ --- \emph{shorter than}
$\log H_0$ --- with the height collapsing to the terminal cycle by up to
six orders of magnitude during those few steps (e.g.\
$6.05\times10^8\to1544$); of the $17$ unresolved runs, $15$ hit the
window-enumeration cap and $2$ reached the $600$-step limit without
closing while staying under the cap. Far from merely recurring, the
height of every closing run is crushed. (iv) Near a periodic
regime the unit spiral rescales the slope cloud by
$\lvert\mu\rvert=\sigma_r(\lambda)^{-3/2}$. Since coordinatewise
monotonicity on a cycle gives $\sigma_r(\lambda)\le1$, this factor is
$\ge1$: in this normalization the spiral dilates the slope cloud and
$\bar\zeta$, consistently with the invariant relation
$\lvert\zeta\rvert^2x^3$. It is therefore not evidence for (B) by itself.
These observations guided the proof of
Theorem~\ref{thm:hdescent} below, which closes (B) under (C$_\kappa$).

One further fact deserves record: on the period-one closing orbit of
$x^3+x-1$ the minimum of $m$ is attained by an \emph{exact algebraic tie} at
every single step (the pair $V(0,0,1)$, $V(1,1,1)$), and the declared
lexicographic order selects the closing branch. The tie-breaking convention
of the companion paper is thus load-bearing for the dynamics, not a
formality: re-implementations that break ties by floating-point noise
deviate from the certified orbit once accumulated cancellation exceeds the
tie margin --- a failure mode we document and exclude by exact comparisons.
\end{remark}

\subsection{The Gram picture: (B) as an integer recurrence}

\begin{proposition}[$Q$-identity]\label{prop:Qident}
With $Q=u_1^2+u_2^2+u_3^2$ and $V=\sum_iw_i^2$ the pseudo-moment,
\[
\sigma_c(Q)\;=\;\bar\zeta^{\,2}\,\sigma_r(Q)\;+\;V .
\]
\end{proposition}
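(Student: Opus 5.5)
This is a direct expansion of the deviation decomposition of Proposition~\ref{prop:pseudo}. Since $\sigma_c$ is a field morphism, $\sigma_c(Q)=\sum_i\sigma_c(u_i)^2$. By definition of the deviations, $\sigma_c(u_i)=\bar\zeta\,x_i+w_i$ with $\bar\zeta=\langle\xi,\nu\rangle/\lvert\xi\rvert^2$. Here everything is computed with the bilinear (non-Hermitian) square, as the paper's convention on $\mathbb{C}^3$ prescribes.

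Squaring each coordinate and summing gives
\[
\sigma_c(Q)=\bar\zeta^{\,2}\sum_i x_i^2\;+\;2\bar\zeta\sum_i x_iw_i\;+\;\sum_i w_i^2 .
\]
The three terms are handled in turn. The first is $\bar\zeta^{\,2}\lvert\xi\rvert^2=\bar\zeta^{\,2}\sigma_r(Q)$, since $\sigma_r$ is a real field morphism and $x_i=\sigma_r(u_i)$. The cross term vanishes by the orthogonality relation $\sum_i x_iw_i=0$ of Proposition~\ref{prop:pseudo}. This relation holds because $\sum_i x_iw_i=\langle\xi,\nu\rangle-\bar\zeta\lvert\xi\rvert^2=0$ by the choice of $\bar\zeta$, with the $x_i$ real so that no conjugation intervenes. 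The last term is by definition the pseudo-moment $V$.

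There is no real obstacle. The only point needing care is to keep the pairing bilinear throughout, so that $\sum_i(\bar\zeta x_i+w_i)^2$ is expanded without conjugates. This is what makes $V=\sum_i w_i^2$ (and not $T=\sum_i\lvert w_i\rvert^2$) appear, and what makes the cross term exactly the defining orthogonality of the Rayleigh center.
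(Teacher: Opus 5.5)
Your proposal is correct and is essentially the paper's own proof: expand $\sigma_c(Q)=\sum_i(\bar\zeta x_i+w_i)^2$ bilinearly, identify the first term as $\bar\zeta^{\,2}\sigma_r(Q)$, kill the cross term with $\sum_i x_iw_i=0$, and read off $V$. Your extra check that this orthogonality follows directly from the definition of $\bar\zeta$ is a harmless addition; the paper takes it from Proposition~\ref{prop:pseudo}.
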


\begin{proof}
$\sigma_c(Q)=\sum\nu_i^2=\sum(\bar\zeta x_i+w_i)^2
=\bar\zeta^2\sum x_i^2+2\bar\zeta\sum x_iw_i+\sum w_i^2$, and
$\sum x_iw_i=0$.
\end{proof}

\begin{lemma}[the height is a norm]\label{lem:heightnorm}
For every admissible state,
\[
H(s)\;\le\;2\sqrt{N(Q)}\;+\;4\,m(s)\,\delta ,
\]
where $N(Q)\in\mathbb{Z}_{\ge1}$ is the field norm of the integral element
$Q$.
\end{lemma}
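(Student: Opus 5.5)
The plan is to sharpen the estimate in the proof of Proposition~\ref{prop:height}, replacing the uncontrolled barycentric term $\lvert\bar\zeta\rvert^2x_1^3$ by an arithmetic quantity through the $Q$-identity (Proposition~\ref{prop:Qident}). First, repeat the per-coordinate bound from Proposition~\ref{prop:height}. For each $i$,
\[
\lvert N(u_i)\rvert=x_i\lvert\bar\zeta x_i+w_i\rvert^2\le 2\lvert\bar\zeta\rvert^2x_i^3+2x_i\lvert w_i\rvert^2 .
\]
Since $x_i\le x_1$ and $\lvert w_i\rvert^2\le T$, this gives
\[
H(s)\le 2\lvert\bar\zeta\rvert^2x_1^3+2x_1T .
\]
Since $T=m\delta/\lvert\xi\rvert$ (as used in Corollary~\ref{cor:mink}) and $x_1\le\lvert\xi\rvert$, we have $x_1T\le m\delta$. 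So the second term already contributes at most $2m\delta$.

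Second, control the barycenter by the $Q$-identity. From $\sigma_c(Q)=\bar\zeta^2\sigma_r(Q)+V$ we get
\[
\lvert\bar\zeta\rvert^2\sigma_r(Q)\le\lvert\sigma_c(Q)\rvert+\lvert V\rvert .
\]
By Proposition~\ref{prop:pseudo}, $\lvert V\rvert\le T$. We also have $x_1^2\le\sigma_r(Q)=\lvert\xi\rvert^2$. Multiplying through by $x_1$ therefore gives
\[
\lvert\bar\zeta\rvert^2x_1^3\le x_1\lvert\bar\zeta\rvert^2\sigma_r(Q)\le x_1\lvert\sigma_c(Q)\rvert+x_1T .
\]

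Third, bound $x_1\lvert\sigma_c(Q)\rvert$ by the norm. Since $N(Q)=\sigma_r(Q)\,\lvert\sigma_c(Q)\rvert^2$, we have
\[
\sqrt{N(Q)}=\lvert\xi\rvert\,\lvert\sigma_c(Q)\rvert\ge x_1\lvert\sigma_c(Q)\rvert .
\]
Hence $\lvert\bar\zeta\rvert^2x_1^3\le\sqrt{N(Q)}+m\delta$. Inserting this into the first step yields
\[
H(s)\le 2\sqrt{N(Q)}+2m\delta+2m\delta=2\sqrt{N(Q)}+4m(s)\delta ,
\]
which is the claim.

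Finally, the side assertion $N(Q)\in\mathbb{Z}_{\ge1}$ holds because $Q\in L\subset\mathcal{O}_K$ is integral and $\sigma_r(Q)=\lvert\xi\rvert^2>0$. So $Q\ne0$, and its norm is a positive integer.

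There is no real obstacle in this argument. The only step needing care is the second one: the cross term $2\bar\zeta\sum x_iw_i$ must vanish, which it does by the orthogonality $\sum x_iw_i=0$ already built into Proposition~\ref{prop:Qident}. One must also use $\lvert V\rvert\le T$ rather than the sharper $\rho$-form, which suffices here.
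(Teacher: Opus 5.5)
Your proof is correct and follows the paper's argument. Both combine the $Q$-identity with $\lvert V\rvert\le T$ and $\sqrt{N(Q)}=\lvert\xi\rvert\,\lvert\sigma_c(Q)\rvert$ to bound the barycentric term by $\sqrt{N(Q)}+m\delta$, then substitute into Proposition~\ref{prop:height}; the only cosmetic difference is that you work with $x_1^3$ (the first inequality of that proposition) where the paper uses $\lvert\xi\rvert^3$ (the second), and both give the same constant.
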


\begin{proof}
$\sigma_r(Q)=\lvert\xi\rvert^2>0$, so $N(Q)=\sigma_r(Q)\lvert\sigma_c(Q)
\rvert^2$ gives $\lvert\sigma_c(Q)\rvert\,\lvert\xi\rvert=\sqrt{N(Q)}$.
By Proposition~\ref{prop:Qident},
$\lvert\bar\zeta\rvert^2\lvert\xi\rvert^2\le\lvert\sigma_c(Q)\rvert+
\lvert V\rvert\le\lvert\sigma_c(Q)\rvert+T$, hence
$\lvert\bar\zeta\rvert^2\lvert\xi\rvert^3\le\sqrt{N(Q)}+T\lvert\xi
\rvert=\sqrt{N(Q)}+m\delta$. Substituting into
Proposition~\ref{prop:height} yields the display.
\end{proof}

\begin{lemma}[ratio--height bound]\label{lem:rbound}
For every admissible state of $L$ with $m(s)\le m^*$,
\[
r\;\le\;\max\bigl(12\,m^*\delta,\ (8H)^{1/3}\bigr).
\]
\end{lemma}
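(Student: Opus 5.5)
The plan is to play the integrality of $u_3$ (which forces $\lvert N(u_3)\rvert\ge1$, so $\sigma_c(u_3)$ cannot be small when $x_3$ is small) against the height bound on $u_1$ (which forces $\sigma_c(u_1)$, and hence the Rayleigh center $\bar\zeta$, to be small when $x_1$ is large). Both complex coordinates are then expressed through the deviation form of Proposition~\ref{prop:pseudo}, namely $\sigma_c(u_i)=\bar\zeta x_i+w_i$ with $\lvert w_i\rvert^2\le T=m\delta/\lvert\xi\rvert$. Throughout I use $T x_1\le m\delta\le m^*\delta$, as in the proof of Proposition~\ref{prop:height}.

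\emph{Step 1 (lower bound from $u_3$).} Since $u_3\in L\setminus\{0\}$ is integral, $1\le\lvert N(u_3)\rvert=x_3\lvert\sigma_c(u_3)\rvert^2$. Hence
\[
\frac1{x_3}\le\lvert\bar\zeta x_3+w_3\rvert^2\le 2\lvert\bar\zeta\rvert^2x_3^2+2T .
\]

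\emph{Step 2 (upper bound on $\bar\zeta$ from $u_1$).} From $\bar\zeta x_1=\sigma_c(u_1)-w_1$ and $x_1\lvert\sigma_c(u_1)\rvert^2=\lvert N(u_1)\rvert\le H$, I get
\[
\lvert\bar\zeta\rvert^2x_1^2\le 2\Bigl(\frac{H}{x_1}+T\Bigr).
\]

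\emph{Step 3 (combine).} Substitute Step~2 into Step~1 and multiply by $x_3$. With $r=x_1/x_3$ this gives
\[
1\le \frac{4H}{r^3}+\frac{4Tx_3^3}{x_1^2}+2Tx_3
\le\frac{4H}{r^3}+\frac{4m^*\delta}{r^3}+\frac{2m^*\delta}{r},
\]
using $Tx_3=Tx_1/r\le m^*\delta/r$ and $x_3^3/x_1^2\le x_3/r^2$.

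\emph{Step 4 (case split).} Suppose $r>(8H)^{1/3}$. Then $4H/r^3<1/2$, so the remaining terms must exceed $1/2$. Since $r\ge1$, this reads $1/2<6m^*\delta/r$, i.e.\ $r<12m^*\delta$. Otherwise $r\le(8H)^{1/3}$ directly. Either way the claimed bound holds.

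The only delicate point is the bookkeeping of which power of $x_1,x_3$ accompanies $T$. The factor $T x_1\le m\delta$ must be extracted so that every $T$-term carries at least one factor $1/r$; otherwise the bound would not close. I would double-check the exponents in Step~3 first. The rest is the elementary inequality $\lvert p+q\rvert^2\le2\lvert p\rvert^2+2\lvert q\rvert^2$.
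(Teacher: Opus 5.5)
Your proof is correct and is essentially the paper's argument. Both combine $\lvert N(u_3)\rvert\ge1$ with $\lvert N(u_1)\rvert\le H$ through the deviation form $\sigma_c(u_i)=\bar\zeta x_i+w_i$, the inequality $\lvert p+q\rvert^2\le2\lvert p\rvert^2+2\lvert q\rvert^2$, and $x_1T\le m^*\delta$. Your Step~3 inequality multiplied by $r^3$ is the paper's $r^3\le4H+4m^*\delta+2m^*\delta r^2\le4H+6m^*\delta r^2$, and you then make the same case split.
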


\begin{proof}
From $1\le \lvert N(u_3)\rvert=x_3\lvert\sigma_c(u_3)\rvert^2$ and
$\lvert\sigma_c(u_3)\rvert\le\lvert\bar\zeta\rvert x_3+\sqrt T$:
$r=x_1/x_3\le x_1(\lvert\bar\zeta\rvert x_3+\sqrt T)^2\le
2\lvert\bar\zeta\rvert^2x_1x_3^2+2x_1T$. From
$H\ge \lvert N(u_1)\rvert\ge x_1(\lvert\bar\zeta\rvert x_1-\sqrt T)^2\ge
\lvert\bar\zeta\rvert^2x_1^3/2-x_1T$:
$\lvert\bar\zeta\rvert^2x_1^3\le2H+2x_1T$. Substituting, and using
$x_1T\le m^*\delta$: $r^3\le4H+6m^*\delta\,r^2$, whence the display.
\end{proof}

\begin{remark}[the integer recurrence]\label{rem:gram}
Three consequences. (i) $N(Q)$ is invariant under the unit action
($Q(\eta s)=\eta^2Q$), so it is a well-defined positive integer on
projective classes; conversely $N(Q)\le\sigma_r(Q)\,(3H/x_3)^2$, so at
bounded module and aspect the two heights are equivalent. Hypothesis (B) is
therefore \emph{equivalent} to: the integer sequence $N(Q_t)$ dips below
some bound infinitely often. (ii) The full arithmetic Gram matrix
$\mathcal{G}=(u_iu_j)\in M_3(K)$ evolves by exact congruence
$\mathcal{G}'=V\mathcal{G}V^{\mathsf T}$ with pinned discriminant and
$\sigma_r(\operatorname{tr}\mathcal{G})=\lvert\xi\rvert^2$
non-increasing: the algorithm is a reduction procedure for ternary
quadratic forms over the order --- Hermite's original setting. (iii) On the
reference orbits the integer $N(Q_t)$ is eventually periodic with tiny
values (constant $3$ on the closing orbit of $x^3+x-1$; cycles $(9,24,53)$
and $(6,42,141,14,8)$ on $x^3-2$ and $x^3-3x^2-2$), and the bound of
Lemma~\ref{lem:heightnorm} holds there with a factor $7$--$27$ margin.
Proving that the congruence dynamics returns $N(\operatorname{tr}
\mathcal{G})$ to bounded values is the strongest known form of (B). In this
direction, one more measurement deserves record: on $20$ orbits started
from \emph{unimodular} scrambles of the standard basis (random products of
elementary transvections --- same marked lattice, initial heights up to
$3.6\times10^{11}$), the height never increased at any step
($H_{\max}=H_0$ exactly, $20/20$) and collapsed to the terminal values
$\{1,2,4,6\}$ within $80$ steps (a development-log measurement; not
shipped as a replayable package). This suggests the stronger
\emph{height-descent} property on a fixed order --- $H(s_{t+1})\le
\max(H(s_t),C_K)$ --- which would imply (B) outright for every initial
state; the only mechanism that can raise $H$ is a jump of the free mode
(the transverse channel is capped by $\varphi^2m^*\delta$
unconditionally), and such jumps require witness coefficients finely
aligned with the real data, available only in the enormous windows of
high-index sublattices, not in the narrow windows that the integral gap
enforces on a fixed order at moderate height. The next subsection turns
this program into a theorem.
\end{remark}

\subsection{The height-descent theorem: (B) holds under (C$_\kappa$)}

\begin{lemma}[integral floor]\label{lem:ifloor}
For any admissible state with $m(s)\le m^*$ and any component $u_i$,
\[
x_i\;\ge\;\min\Bigl(\frac{1}{4T},\ (4\lvert\bar\zeta\rvert^2)^{-1/3}\Bigr)
\;\ge\;\min\Bigl(\frac{\lvert\xi\rvert}{4m^*\delta},\
(4\lvert\bar\zeta\rvert^2)^{-1/3}\Bigr).
\]
\end{lemma}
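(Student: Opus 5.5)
The plan is to run the same norm estimate as in Proposition~\ref{prop:height} and Lemma~\ref{lem:rbound}, but on the single component $u_i$, and to turn it around into a lower bound on $x_i$. The only arithmetic input is integrality. Since $(u_1,u_2,u_3)$ is a $\mathbb{Q}$-basis, $u_i\neq0$. Since $u_i\in L\subset\mathcal{O}_K$, its norm is a nonzero rational integer, so $\lvert N(u_i)\rvert\ge1$.

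\emph{Upper bound on the norm.} Write $\sigma_c(u_i)=\bar\zeta x_i+w_i$ as in Proposition~\ref{prop:pseudo}. Then
\[
1\le\lvert N(u_i)\rvert=x_i\lvert\bar\zeta x_i+w_i\rvert^2\le 2\lvert\bar\zeta\rvert^2x_i^3+2x_i\lvert w_i\rvert^2\le 2\lvert\bar\zeta\rvert^2x_i^3+2x_iT,
\]
using $\lvert p+q\rvert^2\le2\lvert p\rvert^2+2\lvert q\rvert^2$ and $\lvert w_i\rvert^2\le\sum_j\lvert w_j\rvert^2=T$.

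\emph{Dichotomy.} The right-hand side is a sum of two nonnegative terms whose total is at least $1$, so one of them is at least $1/2$.
\begin{itemize}
\item If $2x_iT\ge1/2$, then $x_i\ge1/(4T)$.
\item If $2\lvert\bar\zeta\rvert^2x_i^3\ge1/2$, then $x_i^3\ge(4\lvert\bar\zeta\rvert^2)^{-1}$, which gives the second branch. This case needs $\bar\zeta\neq0$.
\end{itemize}
If $\bar\zeta=0$, the second branch is read as $+\infty$ and the first case necessarily applies. Also $T>0$, since $T\ge2A_\perp>0$ by non-isotropy (Theorem~\ref{thm:noniso}). Either way, $x_i$ is at least the minimum of the two quantities, which is the first inequality.

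\emph{Second inequality.} Use the identity $T=m\delta/\lvert\xi\rvert$. It comes from $A_\perp=\delta/(2\lvert\xi\rvert)$ (Theorem~\ref{thm:covol}(b)) together with $m=T/(2A_\perp)$, and is recorded in Corollary~\ref{cor:mink}. With the hypothesis $m(s)\le m^*$ it gives $1/(4T)=\lvert\xi\rvert/(4m\delta)\ge\lvert\xi\rvert/(4m^*\delta)$. The minimum is monotone in each argument, so the stated lower bound follows.

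No step is a genuine obstacle. The only points needing care are the degenerate case $\bar\zeta=0$ and remembering that $\lvert N(u_i)\rvert\ge1$ uses that the state consists of integral elements.
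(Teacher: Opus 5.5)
Your proof is correct and follows the paper's argument. The paper bounds $x_i\lvert\bar\zeta x_i+w_i\rvert^2\le4x_i\max(\lvert\bar\zeta\rvert^2x_i^2,\lvert w_i\rvert^2)$, whereas you use $2\lvert p\rvert^2+2\lvert q\rvert^2$ and then note that one of the two terms must be at least $1/2$; this gives the same two branches with the same constants, the passage to $\lvert\xi\rvert/(4m^*\delta)$ via $T=m\delta/\lvert\xi\rvert$ is identical, and your explicit treatment of $\bar\zeta=0$ and $T>0$ only makes explicit what the paper leaves implicit.
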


\begin{proof}
$1\le \lvert N(u_i)\rvert=x_i\lvert\bar\zeta x_i+w_i\rvert^2\le
4x_i\max(\lvert\bar\zeta\rvert^2x_i^2,\lvert w_i\rvert^2)$; if the
first term dominates, $x_i\ge(4\lvert\bar\zeta\rvert^2)^{-1/3}$;
otherwise $x_i\ge1/(4\lvert w_i\rvert^2)\ge1/(4T)\ge
\lvert\xi\rvert/(4m^*\delta)$ using $\lvert w_i\rvert^2\le T\le
m^*\delta/\lvert\xi\rvert$.
\end{proof}

Let $m^*$ be any constant bounding the module along the orbit under
consideration. All quantities below are those of the \emph{arrival} state
$s'$ of a step $s\to s'$ (backward comparison); put
$Z:=\lvert\bar\zeta'\rvert$. In the high-height argument below
$H(s')>C_H\ge8m^*\delta$ forces $Z>0$, so the componentwise key ratios
$\kappa_i':=\lvert w_i'\rvert/(Zx_i')$ are then defined. Fix $c^*:=1/40$,
$C_1:=4(m^*\delta)^{3/2}/c^{*3}$ and
$C_H:=\max(8m^*\delta,\,4C_1^2)$.

\begin{theorem}[height descent]\label{thm:hdescent}
For any orbit with $m(s_t)\le m^*$ for all $t$,
$H(s_{t+1})\le\max(H(s_t),C_H)$. Consequently
$H(s_t)\le\max(H(s_0),C_H)$ for all $t$: Hypothesis (B) holds for every
initial state, and the conclusion of Theorem~\ref{thm:main} holds under
(C$_\kappa$) alone.
\end{theorem}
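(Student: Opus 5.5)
We argue one step at a time and compare backward: fix a step $s\to s'$ with $H(s')>C_H$, and show $H(s')\le H(s)$. All quantities are those of the arrival state $s'$, with $Z=\lvert\bar\zeta'\rvert$ and key ratios $\kappa_i'=\lvert w_i'\rvert/(Zx_i')$. The iterated bound $H(s_t)\le\max(H(s_0),C_H)$ then follows by induction. Feeding it as the uniform bound $B$ into Step~2 of Theorem~\ref{thm:main} gives (B), because every time $t$ has height at most $B$. So everything reduces to the one-step inequality.

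First we make $Z>0$ and the height large compared with the transverse channel. By Proposition~\ref{prop:height} at $s'$, $H(s')\le2Z^2x_1'^3+2m^*\delta$. Since $H(s')>8m^*\delta$, the free-mode term dominates: $Z^2x_1'^3\ge\tfrac{3}{8}H(s')>0$. We then split on the key ratios.

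\emph{Case 1: some $\kappa_i'\ge c^*$.} Then $\lvert w_i'\rvert\ge c^*Zx_i'$. Since $\lvert w_i'\rvert^2\le T'\le m^*\delta/\lvert\xi'\rvert$, we get $Zx_i'\le (m^*\delta)^{1/2}\lvert\xi'\rvert^{-1/2}/c^*$. Lemma~\ref{lem:ifloor} bounds $x_i'$ from below by $\min(\lvert\xi'\rvert/(4m^*\delta),(4Z^2)^{-1/3})$. In the first branch we obtain $Z\lvert\xi'\rvert^{3/2}\le 4(m^*\delta)^{3/2}/c^*$. In the second branch $Z^{1/3}\lesssim(m^*\delta)^{1/2}\lvert\xi'\rvert^{-1/2}/c^*$, which again yields $Z\lvert\xi'\rvert^{3/2}\le C_1$ (this is where the cube in $C_1$ comes from). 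Using $x_1'\le\lvert\xi'\rvert$, we get $Z^2x_1'^3\le C_1^2$. Hence $H(s')\le 2C_1^2+2m^*\delta\le C_H$, contradicting the assumption. So Case~1 cannot occur in the high-height regime.

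\emph{Case 2: all $\kappa_i'<c^*$.} Every arrival component is then nearly aligned with the center: $\sigma_c(u_i')=\bar\zeta'x_i'(1+\epsilon_i)$ with $\lvert\epsilon_i\rvert<c^*$. So $\lvert N(u_i')\rvert=Z^2x_i'^3\lvert1+\epsilon_i\rvert^2$ up to factors $(1\pm c^*)^2$. Now write $s=V^{-1}s'$. Each row of the move is one of the explicit forms of Proposition~\ref{prop:mono}: $u_2=u_2'+Bu_3'$, $u_1=u_1'+g u_2'+\dots$, or the $W$-inverse. Each old component is therefore a nonnegative integer combination of new components, because the window constraints make the real coordinates add. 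Then $\sigma_c(u_j)=\bar\zeta'\sum_i v^{-1}_{ji}x_i'(1+\epsilon_i)$, an $\varepsilon$-average of the compensated rows. This gives $\lvert\sigma_c(u_j)\rvert\ge Z x_j(1-c^*)$ and hence $\lvert N(u_j)\rvert\ge(1-c^*)^2Z^2x_j^3$. Coordinatewise monotonicity gives $x_j\ge x_j'$ in the same position. Thus for the index $i$ realizing $H(s')$ we get $H(s)\ge\lvert N(u_i)\rvert\ge \bigl((1-c^*)/(1+c^*)\bigr)^2\lvert N(u_i')\rvert\,(x_i/x_i')^3$.

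The main obstacle is this last comparison. We need the gain $(x_i/x_i')^3$ to beat the loss $((1-c^*)/(1+c^*))^2\approx 0.905$. When the move fixes the relevant coordinate ($x_3'=x_3$ in $V(A,B,g)$, $x_2'=x_2$ in $W$), the components coincide and there is no loss at all. When it strictly decreases it, integrality and the gap lemma should give $x_i-x_i'\ge x_3'$. The plan is to bound $x_i'/x_i$ away from $1$ by $(1+c^*)^{-2/3}$ using the floor from Lemma~\ref{lem:ifloor}. Failing that, we compare instead with a different old component that dominates $u_i'$ by positivity, so that the worst component is always matched by an old one at least as large. Choosing $c^*=1/40$ is meant to leave enough slack for this matching.
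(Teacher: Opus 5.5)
Your reduction to the one-step inequality, the preliminary bound $Z^2x_1'^3\ge\tfrac38H(s')$, and Case~1 are correct and coincide with the paper's argument. The gap is in Case~2. You leave it open yourself, and the route you sketch cannot close it.

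You bound $\lvert1+\varepsilon_i\rvert\le1+c^*$ and $\lvert1+\bar\varepsilon_i\rvert\ge1-c^*$ separately. This loses a fixed factor $\bigl((1+c^*)/(1-c^*)\bigr)^2\approx1.105$. The gain $(x_i/x_i')^3=(1-\rho_i)^{-3}$, with $\rho_i=(x_i-x_i')/x_i$ the relative bite, absorbs that factor only if $\rho_i\gtrsim0.033$. Nothing available gives such a lower bound on the \emph{relative} bite:
\begin{itemize}
\item Integrality and Lemma~\ref{lem:ifloor} bound absolute quantities such as $x_3'$ or $x_i-x_i'$, not their ratio to $x_i$.
\item Lemma~\ref{lem:gap} is a bounded-height statement whose margin $1/(4B)$ degenerates as the height grows.
\item For instance, $V(1,0,0)$ has $\rho_1=x_3/x_1=1/r$. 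Lemma~\ref{lem:rbound} only caps $r$ by $\max(12m^*\delta,(8H)^{1/3})$, which grows with the height, and the high-height regime is exactly where you are working.
\end{itemize}
Your fallback of matching $u_i'$ with a different old component is unavailable in precisely this case. For $V(A,B,g)$ the first column of the inverse matrix is $(1,0,0)^{\mathsf T}$, so the new head $u_1'$ occurs only in the old head $u_1$.

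The missing idea is to keep the correlation between $\varepsilon_j$ and its average rather than bounding them independently. The inverse move matrix $C$ has unit diagonal. Hence $\bar\varepsilon_j=(1-\rho_j)\varepsilon_j+(\text{foreign terms of total weight exactly }\rho_j)$, which gives $\lvert\varepsilon_j-\bar\varepsilon_j\rvert\le2c^*\rho_j$. The distortion is therefore proportional to the bite, not a constant. Comparing each component with itself then gives
\[
\frac{\lvert N(u_j')\rvert}{\lvert N(u_j)\rvert}\le(1-\rho_j)^3\Bigl(1+\frac{2c^*\rho_j}{1-c^*}\Bigr)^2\le1-\rho_j(1-7c^*)\le1
\]
for every bite size, with $u_j=u_j'$ when $\rho_j=0$. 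So $H(s')\le H(s)$ follows with no lower bound on $\rho_j$ needed.
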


\begin{remark}[successor convention]
In Theorem~\ref{thm:hdescent}, $s_{t+1}$ denotes the successor state of
$s_t$; since $H$, $m$ and $T$ are invariant under the sorting relabeling of
the components, the statement is insensitive to whether the successor is
read before or after sorting.
\end{remark}

\begin{proof}
Suppose $H':=H(s')>C_H$; we show $H'\le H:=H(s)$. By the first
inequality of Proposition~\ref{prop:height} at $s'$,
$H'\le2Z^2x_1'^3+2m^*\delta$, so $Z^2x_1'^3\ge(H'-2m^*\delta)/2\ge
H'/4$ since $H'>C_H\ge8m^*\delta$.

\emph{Case 1: some $\kappa_i'\ge c^*$.} Since $w_i'$ is a component of
the transverse vector, $\lvert w_i'\rvert\le\sqrt{T'}\le
\sqrt{m^*\delta/\lvert\xi'\rvert}$, whence
$Z\,x_i'\sqrt{\lvert\xi'\rvert}\le\sqrt{m^*\delta}/c^*$.
Lemma~\ref{lem:ifloor} at $s'$ gives
$x_i'\ge\min\bigl(\lvert\xi'\rvert/(4m^*\delta),\,
(4Z^2)^{-1/3}\bigr)$. If the first branch holds, substituting
$x_i'\ge\lvert\xi'\rvert/(4m^*\delta)$ into the previous display gives
$Z\lvert\xi'\rvert^{3/2}\le4m^*\delta\cdot Zx_i'\sqrt{\lvert\xi'
\rvert}\le4(m^*\delta)^{3/2}/c^*\le C_1$; if the second, then
$Z^{1/3}\sqrt{\lvert\xi'\rvert}\le4^{1/3}\sqrt{m^*\delta}/c^*$ and
cubing gives $Z\lvert\xi'\rvert^{3/2}\le4(m^*\delta)^{3/2}/c^{*3}\le
C_1$. Neither computation depends on the index $i$. But
$Z\lvert\xi'\rvert^{3/2}\ge Zx_1'^{3/2}\ge\sqrt{H'}/2$, so
$H'\le4C_1^2\le C_H$ --- contradicting $H'>C_H$. (A relative deviation of
size $c^*$ on \emph{any} component can only occur at bounded height.)

\emph{Case 2: all $\kappa_i'<c^*$ (the compensated regime).} Write
$\sigma_c(u_i')=\bar\zeta'x_i'(1+\varepsilon_i)$; by definition of
$\kappa_i'$, $\lvert\varepsilon_i\rvert=\kappa_i'<c^*$ for every $i$.
By unimodularity the components of $s$ are nonnegative integral
combinations of those of $s'$ with unit diagonal: $u_j=\sum_k
C_{jk}u_k'$ where $C=\bigl(\begin{smallmatrix}1&g&A\\0&1&B\\0&0&1
\end{smallmatrix}\bigr)$ for a move $V(A,B,g)$ and
$C=\bigl(\begin{smallmatrix}1&1&0\\0&1&0\\1&0&1\end{smallmatrix}\bigr)$
for the move $W$; in both cases $C_{jk}\in\mathbb{Z}_{\ge0}$ and
$C_{jj}=1$. Applying $\sigma_r$ and $\sigma_c$:
$x_j=\sum_kC_{jk}x_k'$ exactly, and
$\sigma_c(u_j)=\bar\zeta'x_j(1+\bar\varepsilon_j)$ where
$\bar\varepsilon_j=\sum_kC_{jk}x_k'\varepsilon_k/x_j$ is the
$x'$-weighted average, so $\lvert\bar\varepsilon_j\rvert<c^*$ as well.
Set $\rho_j:=(x_j-x_j')/x_j\in[0,1)$, the relative bite of component
$j$. Since the diagonal is $1$, the foreign part of the average has
total weight exactly $\rho_j$, whence
$\lvert\varepsilon_j-\bar\varepsilon_j\rvert
\le\lvert\varepsilon_j\rvert\rho_j+c^*\rho_j\le2c^*\rho_j$.
Therefore
\[
\begin{aligned}
\frac{\lvert N(u_j')\rvert}{\lvert N(u_j)\rvert}
&=\Bigl(\frac{x_j'}{x_j}\Bigr)^{3}
\frac{\lvert1+\varepsilon_j\rvert^{2}}{\lvert1+\bar\varepsilon_j\rvert^{2}}
\le(1-\rho_j)^{3}\Bigl(1+\frac{2c^*\rho_j}{1-c^*}\Bigr)^{2}\\
&\le(1-\rho_j)\bigl(1+7c^*\rho_j\bigr)
\le1-\rho_j(1-7c^*)\;\le\;1,
\end{aligned}
\]
with strict inequality whenever $\rho_j>0$ (here $c^*=1/40<1/7$), and
$\lvert N(u_j')\rvert=\lvert N(u_j)\rvert$ when $\rho_j=0$ (then the foreign weight vanishes and
$u_j=u_j'$, as $x_k'>0$). Thus no component's height increases under
the backward pairing, uniformly over both move types:
$H'=\max_j\lvert N(u_j')\rvert\le\max_j\lvert N(u_j)\rvert=H$.
\end{proof}

\begin{remark}
The constants are explicit but crude:
$C_H=64(m^*\delta)^3/c^{*6}=2.62144\times10^{11}(m^*\delta)^3$
whenever the second term dominates; for $x^3-2$ with $m^*=40$ this is
about $1.9\times10^{19}$, far above every
empirically observed oscillation threshold ($\le3.3\times10^4$ across
$200$ orbits; $2{,}826$ recorded transitions). The mechanism matches the
measurements quantitatively: crush times $\sim\log H_0$ (correlation
$0.84$ over the $156$ orbits with a recorded crush time; $44$ are
censored, a non-conclusive diagnostic for those), and the barycenter is
observed invariant within $5\%$ during collapses of $H$ by eight orders
of magnitude (a development-log measurement; not shipped as a replayable
package).
\end{remark}

\section{Finiteness}\label{sec:finiteness}

Let $O\subseteq\mathcal{O}_K$ be an order and $L=O$ (for a general lattice,
replace $\langle\eta\rangle$ below by the finite-index stabilizer
$\{u\in O^\times:uL=L\}$; the proof is identical with the regulator multiplied
by the index). Let $\varepsilon$ be a fundamental unit of $K$ and let
\[
\eta:=\text{the smallest power of }\pm\varepsilon\text{ lying in }O^\times
\text{ with }\sigma_r>0
\]
(it exists: $[\mathcal{O}_K^\times:O^\times]<\infty$ for every order); after
replacing $\eta$ by $\eta^{-1}$ if necessary, assume $\sigma_r(\eta)>1$, with
$\eta$ the smallest such positive power (so $\mathrm{Reg}_O$ below is
minimal), and set
$\mathrm{Reg}_O:=\log\sigma_r(\eta)>0$. The action $u\mapsto\eta^ku$ preserves
admissibility, sorting, and $\lvert N(u_i)\rvert$.

\begin{theorem}[finiteness]\label{thm:finiteness}
For all $m_0\ge1$, $B\ge1$, and $\delta_0>0$, the set
\[
\begin{aligned}
S(m_0,B,\delta_0):={}&\{\,s=(u_1,u_2,u_3)\ \text{admissible sorted triple of elements of }L:\\
&\ \ m(s)\le m_0,\ \lvert N(u_i)\rvert\le B,\ \delta(s)=\delta_0\,\}
\end{aligned}
\]
is finite modulo $\langle\eta\rangle$, with the explicit bound
\[
\#\bigl(S(m_0,B,\delta_0)/\langle\eta\rangle\bigr)\le
\bigl(2C_L\max(X_0,(B/x_*)^{1/2})+1\bigr)^{9}
\]
in the notation below.
\end{theorem}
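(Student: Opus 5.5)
The plan is to normalize each class by a unit, bound the normalized coordinates at both places, and then count lattice points of $L$ in a box.

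\emph{Step 1: normalization by $\eta$.} Multiplying $s$ by $\eta^k$ scales $\xi$ by $\sigma_r(\eta)^k$. Since $\mathrm{Reg}_O=\log\sigma_r(\eta)$, each class modulo $\langle\eta\rangle$ has a representative with $x_1\in[X_0^{-},X_0^{-}\sigma_r(\eta))$ for a fixed reference scale. This is admissible because the action preserves admissibility, sorting and $\lvert N(u_i)\rvert$, and the invariance of $m$ is Proposition~\ref{prop:hyperbolic}. I will take $X_0$ to be the upper end of this window, chosen so that the window sits at a convenient scale: $X_0\asymp\sigma_r(\eta)$ times a scale fixed by $\delta_0$.

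\emph{Step 2: bound the real place.} On the normalized representative, $x_3\le x_2\le x_1\le X_0$. A lower bound $x_i\ge x_*$ is also needed. I expect $x_*$ to come from the integral floor, Lemma~\ref{lem:ifloor}, or from $1\le\lvert N(u_i)\rvert$ combined with an upper bound on $\lvert\sigma_c(u_i)\rvert$. The bound at the complex place then follows from
\[
\lvert\sigma_c(u_i)\rvert^2=\lvert N(u_i)\rvert/x_i\le B/x_*,
\]
so every $\lvert\sigma_c(u_i)\rvert\le (B/x_*)^{1/2}$. Hence each $u_i$ lies in the box
\[
\{u:\ \lvert\sigma_r(u)\rvert\le R,\ \lvert\sigma_c(u)\rvert\le R\},\qquad R:=\max\bigl(X_0,(B/x_*)^{1/2}\bigr).
\]

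\emph{Step 3: count lattice points.} Identify $L$ with a full lattice in $\mathbb{R}^3$ through the Minkowski embedding $(\sigma_r,\operatorname{Re}\sigma_c,\operatorname{Im}\sigma_c)$. Fix a $\mathbb{Z}$-basis and let $C_L$ be the operator norm of the inverse basis matrix. Each integer coordinate of $u_i$ is then at most $C_LR$ in absolute value, giving at most $2C_LR+1$ choices per coordinate. With three coordinates per element and three elements, the count is $(2C_LR+1)^9$, which is the displayed bound. In this route the constraints $m\le m_0$ and $\delta=\delta_0$ serve only to produce $x_*$; they are not otherwise needed.

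\emph{Main obstacle: the lower floor $x_*$.} Lemma~\ref{lem:ifloor} gives $x_i\ge\min\bigl(\lvert\xi\rvert/(4m_0\delta_0),(4\lvert\bar\zeta\rvert^2)^{-1/3}\bigr)$, and the second term involves $\bar\zeta$, which is not yet bounded. To bound it I would use Proposition~\ref{prop:Qident} together with $\lvert\bar\zeta\rvert^2x_1^3\le 2B+2m_0\delta_0$, which is the second half of the proof of Lemma~\ref{lem:rbound} with $H\le B$. This gives $\lvert\bar\zeta\rvert^2\le(2B+2m_0\delta_0)/x_1^3$. Since $x_1$ is normalized into a fixed window bounded below, this bounds $\lvert\bar\zeta\rvert$, and with it $x_*>0$ explicitly in terms of $B$, $m_0$, $\delta_0$ and $X_0$.

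Should that route fail, I would bound the complex place directly. Write $\sigma_c(u_i)=\bar\zeta x_i+w_i$; then $\lvert w_i\rvert^2\le T=m_0\delta_0/\lvert\xi\rvert$ together with the $\bar\zeta$ bound above controls $\lvert\sigma_c(u_i)\rvert$ without any floor. In that case I would redefine the constants $X_0$ and $x_*$ accordingly, so that the final box-counting argument is unchanged.
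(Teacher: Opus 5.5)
Your proof is correct. It shares the paper's skeleton: unit normalization, two-sided control of the real place, the complex place from $\lvert N(u_i)\rvert\le B$ plus a floor, then the same lattice-point count. The normalization and the floor, however, are obtained differently.

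\textbf{How the paper does it.} It normalizes the norm-adjusted coordinate $t_1=\log(x_1/\lvert N(u_1)\rvert^{1/3})$ into $[0,\mathrm{Reg}_O)$. This gives $x_1\ge1$ for free and $X_0=\sqrt3\,B^{1/3}e^{\mathrm{Reg}_O}$. It then gets the floor from a self-contained chain on the tail component. It splits $\sigma_c(u_3)$ into a parallel part, whose coefficient $\lvert\bar\zeta\rvert\lvert\xi\rvert$ is bounded by $\lVert\nu\rVert\le(3B/x_3)^{1/2}$ using all three norm caps, and a transverse part bounded by $(m_0\delta_0)^{1/2}$. Closing the implicit inequality $x_3^{-1/2}\le(3Bx_3)^{1/2}+(m_0\delta_0)^{1/2}$ gives $x_*=\min\bigl(1,[(3B)^{1/2}+(m_0\delta_0)^{1/2}]^{-2}\bigr)$.

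\textbf{How you do it.} You normalize $x_1$ itself into $[c,c\,\sigma_r(\eta))$ and control $\bar\zeta$ through the head component, $\lvert\bar\zeta\rvert^2x_1^3\le2B+2m_0\delta_0$ (the estimate inside Lemma~\ref{lem:rbound}). You then invoke Lemma~\ref{lem:ifloor}. With $\lvert\xi\rvert\ge x_1\ge c$ this yields $x_*=c\min\bigl((4m_0\delta_0)^{-1},(8B+8m_0\delta_0)^{-1/3}\bigr)$, and every step checks.

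\textbf{What each route buys.} Yours is shorter given the Section~4 lemmas and avoids the implicit inequality in $x_3$. Your fallback, $\lvert\sigma_c(u_i)\rvert\le\lvert\bar\zeta\rvert x_i+(m_0\delta_0/c)^{1/2}$, even bounds the complex place with no floor at all. The paper's route is purely static, since it uses none of the descent-section lemmas, and its floor does not depend on an arbitrary scale.

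\textbf{Small points to tidy.}
\begin{enumerate}
\item The statement's ``notation below'' refers to the paper's specific $X_0$ and $x_*$. You establish the displayed bound with your own constants, which is what the theorem substantively asserts. For it to be explicit you must actually fix $c$ (e.g.\ $c=1$, giving $X_0=e^{\mathrm{Reg}_O}$).
\item $C_L$ must be the $\ell^\infty$ operator norm (maximal absolute row sum) of the inverse basis matrix for the coordinate bound $C_LR$. A Euclidean operator norm costs an extra factor $\sqrt2$ on your box.
\item State that $\delta$ is unit-invariant ($\det M$ scales by $N(\eta)=\pm1$), so the normalized representative stays in $S(m_0,B,\delta_0)$, where your bounds are applied.
\end{enumerate}
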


\begin{remark}[the $\delta$-constraint is necessary --- and free]
Without pinning $\delta$ the set can be infinite: in the plastic field with
$L=\mathbb{Z}[\alpha]$, the admissible sorted triples
$(\alpha^{2},\alpha^{-k},\alpha^{-k-1})$, $k\ge1$, all have unit norms and
module below $2$, are pairwise inequivalent modulo
$\langle\alpha\rangle$, and their coordinate determinants grow without
bound. The constraint costs nothing in the intended application: $\delta$
is invariant along every orbit (\S3), so the orbit of $s_0$ meets only
$S(m^*,B,\delta(s_0))$.
\end{remark}

\begin{proof}
Throughout, $x_i=\sigma_r(u_i)>0$ sorted decreasingly, and (integrality)
\begin{equation}\label{eq:F2}
1\le x_i\lvert\sigma_c(u_i)\rvert^2=\lvert N(u_i)\rvert\le B .
\end{equation}

\emph{Step 1 (normalization).} $t_1:=\log(x_1/\lvert N(u_1)\rvert^{1/3})$ is
translated by $k\,\mathrm{Reg}_O$ under $u\mapsto\eta^ku$; choose the unique
representative with $t_1\in[0,\mathrm{Reg}_O)$. For it,
\[
1\le\lvert N(u_1)\rvert^{1/3}\le x_1\le B^{1/3}e^{\mathrm{Reg}_O},
\qquad
1\le x_1\le\lvert\xi\rvert\le\sqrt3\,x_1,
\]
\[
\sqrt3\,x_1\le\sqrt3\,B^{1/3}e^{\mathrm{Reg}_O}=:X_0 .
\]
All subsequent bounds concern this representative --- which is what
finiteness modulo $\langle\eta\rangle$ requires.

\emph{Step 2 (lower bound on $x_3$).} Write $\nu=(\alpha+i\beta)\hat\xi+
\nu_\perp$, $R=(\alpha^2+\beta^2)^{1/2}$, $T=\lVert\nu_\perp\rVert^2$; recall
$T=m\delta/\lvert\xi\rvert\le m_0\delta$ using $\lvert\xi\rvert\ge1$. Four
links:
\[
\lvert\sigma_c(u_3)\rvert\ge x_3^{-1/2},\qquad
\lvert\sigma_c(u_3)\rvert\le R\,\frac{x_3}{\lvert\xi\rvert}+T^{1/2},
\]
\[
R\le\Bigl(\sum_i\tfrac{B}{x_i}\Bigr)^{1/2}\!\le\Bigl(\tfrac{3B}{x_3}
\Bigr)^{1/2},\qquad
T^{1/2}\le(m_0\delta)^{1/2},
\]
the first and third by \eqref{eq:F2} (with $x_3=\min_i x_i$), the second by
the orthogonal decomposition and the triangle inequality. Chaining (all terms
positive, $x_3/\lvert\xi\rvert\le x_3$):
$x_3^{-1/2}\le(3Bx_3)^{1/2}+(m_0\delta)^{1/2}$, whence if $x_3\le 1$,
\[
x_3\;\ge\;x_*:=\min\Bigl(1,\bigl[(3B)^{1/2}+(m_0\delta)^{1/2}\bigr]^{-2}
\Bigr).
\]

\emph{Step 3 (compactness).} For all $i$: $x_*\le x_i\le X_0$ and
$\lvert\sigma_c(u_i)\rvert\le(B/x_*)^{1/2}$. Let $(\omega_k)$ be a
$\mathbb{Z}$-basis of $L$ and $W=(\sigma_j(\omega_k))$ its embedding matrix.
The integer coordinates of $u_i$ are $W^{-1}(\sigma_r(u_i),\sigma_c(u_i),
\bar\sigma_c(u_i))^{\mathsf T}$; taking moduli row by row,
$\lVert\mathrm{coord}(u_i)\rVert_\infty\le
C_L\max(X_0,(B/x_*)^{1/2})$ with $C_L:=\max_k\sum_j\lvert(W^{-1})_{kj}\rvert$.
Each $u_i$ therefore ranges over an explicit finite set, giving the displayed
bound.
\end{proof}

\begin{remark}
The proof is static --- sorting, one Dirichlet normalization, $N\ge1$, the
height cap, the module cap, and the pinned discriminant; no dynamics enters.
The extension to non-integral lattices ($dL\subset\mathcal{O}_K$) only shifts
the constants.
\end{remark}

\section{The case machine}\label{sec:cases}

This section assembles the case analysis toward Hypothesis
(C$_\kappa$), mostly in its one-step form ($\kappa=1$). In several
regimes of the high phase ($m\ge m_0$) we exhibit a short admissible
block of moves that contracts $m$ or decreases the potential $\Phi$ of
Lemma~\ref{lem:D1}; the regimes we cannot yet close are stated as
explicit partial cases and exclusions (Proposition~\ref{prop:partialR}). Throughout, $r=x_1/x_3$, $r_{12}=x_1/x_2$,
$r_{23}=x_2/x_3$, and $\sqrt T$ is the transverse scale.

\subsection{Stretched states: the two roads}

\begin{lemma}[stretched, $u_3$-road]\label{lem:Ea}
Assume $m(s)\ge m_0$, $r\ge\Lambda$, and
$\lvert w_3\rvert\le\sqrt T/r$. Then the window contains a candidate $s'$
(one $V$-move) with
\[
m(s')\;\le\;21.2\,\frac{m}{r}.
\]
In particular $m(s')\le m/\gamma$ whenever $r\ge21.2\,\gamma$.
\end{lemma}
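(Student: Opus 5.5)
The plan is to exhibit one explicit $V$-move that subtracts multiples of $u_3$ from the two large components, and to bound $m(s')$ through the identity $m=T\lvert\xi\rvert/\delta$ (from $T=m\delta/\lvert\xi\rvert$, Corollary~\ref{cor:mink}). Since $\delta$ is invariant along moves and $m$, $T$, $\lvert\xi\rvert$ are insensitive to re-sorting, it suffices to show $T'\lvert\xi'\rvert\le 21.2\,T\lvert\xi\rvert/r$ for the unsorted successor.

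\emph{Step 1: the move.} Take $g=0$, $B:=\lceil x_2/x_3\rceil-1$ and $A:=\lceil x_1/x_3\rceil-1$. Then $u_1'=u_1-Au_3$, $u_2'=u_2-Bu_3$, $u_3'=u_3$, and the real coordinates become $x_1',x_2'\in(0,x_3]$ and $x_3'=x_3$. Integrality excludes exact divisibility in the borderline cases, as in Lemma~\ref{lem:gap}; otherwise the ceiling-minus-one choice already gives strict positivity. I then check that this $V(A,B,0)$ satisfies the window/positivity chain of the companion paper. The hypothesis $r\ge\Lambda$ guarantees $A\ge1$, so the move is nontrivial. If the companion window forces a slightly different normalization (for instance $x_2-Bx_3\ge0$ taken with a floor, or a nonzero $g$), I would redo the bookkeeping with $x_i'\le 2x_3$ instead of $\le x_3$. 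This slack is, I suspect, where the constant $21.2$ comes from rather than the cleaner value obtained below.

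\emph{Step 2: real side.} By construction $\lvert\xi'\rvert\le\sqrt3\,x_3=\sqrt3\,x_1/r\le\sqrt3\,\lvert\xi\rvert/r$. With the weaker normalization $x_i'\le2x_3$ the factor becomes at most $\sqrt{9}$.

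\emph{Step 3: transverse side.} By Proposition~\ref{prop:recenter}, $T'\le T'_{\mathrm{old}}=\lvert w_1-Aw_3\rvert^2+\lvert w_2-Bw_3\rvert^2+\lvert w_3\rvert^2$. Here we use the hypothesis $\lvert w_3\rvert\le\sqrt T/r$ together with $A\le x_1/x_3=r$ and $B\le x_2/x_3\le r$. This gives $\lvert w_1-Aw_3\rvert\le\lvert w_1\rvert+\sqrt T$, and similarly for the second component. I would bound these sums with the Cauchy--Schwarz form $(p+q)^2\le(1+\epsilon)p^2+(1+1/\epsilon)q^2$ and use $\lvert w_1\rvert^2+\lvert w_2\rvert^2\le T$. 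With $\epsilon=1$ this yields $T'_{\mathrm{old}}\le 2T+4T+T/r^2\le 7T$ for $r\ge1$; the cruder triangle estimate gives $9T$.

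\emph{Step 4: combine.} Putting the two sides together gives $m(s')\le 7\sqrt3\,m/r\approx12.1\,m/r$ under the clean normalization. Under the weaker window normalization of Step 1, the same argument gives a constant of the order of $21.2$. Since the algorithm minimizes $m$ over the window, the selected successor also satisfies the bound, and the final clause is immediate.

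The main obstacle I expect is Step 1: confirming that the specific $(A,B,0)$ lies in the companion paper's candidate window, whose positivity chain and tie conventions are not restated here. If it does not, I would instead take the nearest admissible $(A,B,g)$ and absorb the at most one extra multiple of $x_3$ (and at most one of $w_3$) into the constants exactly as in Steps 2--3.
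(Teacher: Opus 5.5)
Your proposal is correct and follows the paper's proof: you use a single $V(a,b,0)$ move that reduces $u_1,u_2$ by multiples of $u_3$, bound the old-center energy via $a,b\le r$ and $\lvert w_3\rvert\le\sqrt T/r$, let recentering (Proposition~\ref{prop:recenter}) only help, and conclude with $m=T\lvert\xi\rvert/\delta$ and the invariance of $\delta$. The paper instead takes central remainders $a=\lfloor(x_1-x_3/2)/x_3\rfloor$, so that $x_1',x_2'\in(x_3/2,3x_3/2]$ and $\lvert\xi'\rvert\le\sqrt{11/2}\,x_3$, together with the cruder bound $T'\le9T$; that is where $9\cdot2.35\approx21.2$ comes from, whereas your choice gives the sharper $7\sqrt3\approx12.1$. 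Your maximal remainders are admissible, since $\lceil y\rceil-1<y$ gives strict positivity, which is all the window requires; your sharper constant also uses the split $(p+q)^2\le2p^2+2q^2$.
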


\begin{proof}
Take $V(a,b,0)$ with $a=\lfloor(x_1-x_3/2)/x_3\rfloor$,
$b=\lfloor(x_2-x_3/2)/x_3\rfloor$ (central remainders; both floors lie in the
window). Then $x_1',x_2'\in(x_3/2,3x_3/2]$, so
$\lvert\xi'\rvert\le\sqrt{11/2}\,x_3<2.35\,x_3$. Bounding the new transverse
energy at the \emph{old} center (recentering only helps,
Proposition~\ref{prop:recenter}):
$T'\le(\lvert w_1\rvert+a\lvert w_3\rvert)^2+(\lvert w_2\rvert+b\lvert
w_3\rvert)^2+\lvert w_3\rvert^2$. Since $a,b\le x_1/x_3=r$ and
$\lvert w_3\rvert\le\sqrt T/r$, each product is $\le\sqrt T$, and
$\lvert w_1\rvert,\lvert w_2\rvert\le\sqrt T$, so $T'\le9T$. Hence
$m(s')=T'\lvert\xi'\rvert/\delta\le9T\cdot2.35\,x_3/\delta
=21.2\,m\,(x_3/\lvert\xi\rvert)\le21.2\,m/r$.
\end{proof}

\begin{lemma}[stretched, restricted $u_2$-road via the covolume]\label{lem:Eb2}\sloppy
Assume $m\ge m_0$, $\sqrt T/r<\lvert w_3\rvert\le\sqrt T/r_{12}$, and
$m\ge r\,r_{12}$. Let $q^*=\mathrm{round}\bigl(\operatorname{Re}(w_2\bar
w_3)/\lvert w_3\rvert^2\bigr)$ (nearest integer; half-integer ties
resolved as in the archived script). If $0\le q^*\le\lfloor x_2/x_3\rfloor$
--- i.e.\ the move $V(0,q^*,0)$ is admissible (``$q^*$ lies in the
window'') --- and $q^*\le x_2/(2x_3)$, then the direct $u_2-q^*u_3$
reduction gives a certified witness; in the subwindow where the central head reduction uses
$g\le r_{12}$ subtractions it yields
\[
m(s')\;\le\;14.1\,\frac{m}{r_{12}} ;
\]
with only the displayed half-window hypothesis the same computation gives
the weaker constant $28.2$. The complementary case
$q^*>x_2/(2x_3)$, and the exits where $q^*$ falls outside the window, are
partial and deferred.
\end{lemma}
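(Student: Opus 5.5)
The plan follows the template of Lemma~\ref{lem:Ea}. First pick an explicit admissible candidate $s'$. Then bound its transverse energy at the \emph{old} center, which is legitimate because recentering only helps (Proposition~\ref{prop:recenter}). Then bound $\lvert\xi'\rvert$. Finally conclude through $m(s')=T'\lvert\xi'\rvert/\delta$ and $\delta=T\lvert\xi\rvert/m$. The candidate is the composite $V$-move that first replaces $u_2$ by $u_2'=u_2-q^*u_3$ and then centrally reduces the head: $u_1'=u_1-g\,u_2'-A\,u_3$, with $g,A$ chosen so that $x_1'$ lands in a central window of width comparable to $x_2'$ or $x_3$. The hypothesis $0\le q^*\le\lfloor x_2/x_3\rfloor$ makes the first part admissible, and $q^*\le x_2/(2x_3)$ gives $x_2'=x_2-q^*x_3\ge x_2/2$.

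\emph{Step 1 (the $u_2$-deviation via the covolume).} By definition, $q^*$ is the nearest integer to the projection coefficient of $w_2$ on $w_3$. Hence
\[
\lvert w_2-q^*w_3\rvert^2\le\Bigl(\frac{\lvert P_{23}\rvert}{\lvert w_3\rvert}\Bigr)^2+\frac{\lvert w_3\rvert^2}{4}.
\]
Theorem~\ref{thm:covol}(c) gives $\lvert P_{23}\rvert=\delta x_1/(2\lvert\xi\rvert^2)$. Substituting $\delta=T\lvert\xi\rvert/m$ and using the lower bound $\lvert w_3\rvert>\sqrt T/r$ yields
\[
\frac{\lvert P_{23}\rvert}{\lvert w_3\rvert}\le \frac{\sqrt T\,r\,x_1}{2m\lvert\xi\rvert}\le\frac{\sqrt T\,r}{2m}\le\frac{\sqrt T}{2r_{12}},
\]
where the last inequality is exactly where the hypothesis $m\ge r\,r_{12}$ enters. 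Combined with $\lvert w_3\rvert\le\sqrt T/r_{12}$, this gives $\lvert w_2'\rvert\le c\,\sqrt T/r_{12}$ with an explicit $c$ (of order $1/\sqrt2$).

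\emph{Step 2 (head and energy).} Write $w_1'=w_1-g\,w_2'-A\,w_3$. Then
\[
\lvert w_1'\rvert\le\lvert w_1\rvert+g\lvert w_2'\rvert+A\lvert w_3\rvert .
\]
In the subwindow $g\le r_{12}$, the term $g\lvert w_2'\rvert$ is $O(\sqrt T)$. The $A$-term is controlled because $A x_3$ is at most the remainder of the head after the $g$-subtraction, which is $\lesssim x_2'$. So $A\le r_{23}$, and then $A\lvert w_3\rvert\le r_{23}\sqrt T/r_{12}$. This would need $r_{23}\lesssim r_{12}$, or else absorbing $A$ into the choice $A=0$ with a central remainder modulo $x_2'$ only; I would try the latter first. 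Summing the three squared components gives $T'\le C\,T$ with a numerical $C$.

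\emph{Step 3 (real size).} After the central reduction, $x_1'$, $x_2'$ and $x_3$ are all at most a fixed multiple of $x_2$, so $\lvert\xi'\rvert\le c'x_2$. Hence
\[
m(s')\le C c'\,T x_2/\delta = Cc'\,m\,x_2/\lvert\xi\rvert\le Cc'\,m/r_{12}.
\]
Tracking the constants should give $14.1$. Without $g\le r_{12}$, one has only the half-window information; the head-reduction bound then loses a factor of $2$ (through $x_2'\ge x_2/2$ feeding $g\le 2r_{12}$), giving $28.2$. The main obstacle is Step 2, namely keeping $g\lvert w_2'\rvert$ and any $A\lvert w_3\rvert$ contribution at scale $\sqrt T$ while still making $x_1'$ small. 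This is the reason the statement restricts to the subwindow and defers the complementary cases.
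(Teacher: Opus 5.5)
Your proposal is correct and is essentially the paper's proof. It uses the same covolume bound $\lvert P_{23}\rvert/\lvert w_3\rvert\le\sqrt T\,r/(2m)\le\sqrt T/(2r_{12})$, with $m\ge r\,r_{12}$ entering at the same point. It uses the same candidate: the paper takes exactly your preferred branch $A=0$, i.e.\ $V(0,q^*,g)$ with central remainder modulo $x_2-q^*x_3$. It reaches the same conclusion via the old-center energy bound, $\lvert\xi'\rvert\le2.35\,x_2$ and $x_2/\lvert\xi\rvert\le1/r_{12}$. The only difference is that you combine the orthogonal and rounding errors by Pythagoras, getting $\lvert w_2-q^*w_3\rvert\le\sqrt T/(\sqrt2\,r_{12})$, where the paper uses the triangle inequality, getting $\sqrt T/r_{12}$ and hence $T'\le6T$; so your constants, once tracked, land below $14.1$ and $28.2$ rather than exactly on them.
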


\begin{proof}
By Theorem~\ref{thm:covol}, the component of $w_2$ orthogonal to $w_3$ has
modulus $P_{23}/\lvert w_3\rvert=\delta x_1/(2\lvert\xi\rvert^2\lvert
w_3\rvert)\le\sqrt T\,r/(2m)$, using $\lvert w_3\rvert>\sqrt T/r$,
$\delta=T\lvert\xi\rvert/m$ and $x_1\le\lvert\xi\rvert$. Rounding adds at most
$\lvert w_3\rvert/2\le\sqrt T/(2r_{12})$, and $m\ge r\,r_{12}$ gives $\sqrt
T\,r/(2m)\le\sqrt T/(2r_{12})$; hence
\[
\lvert w_2-q^*w_3\rvert\;\le\;\frac{\sqrt T}{r_{12}} .
\]
Now reduce $x_1$ by $u_2^{(q^*)}:=u_2-q^*u_3$ with central remainders.
The extra assumption $q^*\le x_2/(2x_3)$ gives
$x_2-q^*x_3\ge x_2/2$, so the real remainders are at the $x_2$-scale and
$\lvert\xi'\rvert\le2.35\,x_2$. In the certified subwindow where the
central head reduction uses $g\le r_{12}$ subtractions, the transverse
cost is $g\,\lvert w_2-q^*w_3\rvert\le\sqrt T$, whence at the old center
\[
T'\le(\lvert w_1\rvert+\sqrt T)^2+(\sqrt T/r_{12})^2+\lvert w_3\rvert^2\le6T,
\]
and $m(s')\le6T\cdot2.35\,x_2/\delta\le14.1\,m/r_{12}$. Without the
extra $g\le r_{12}$ restriction one still has $g\le2r_{12}$ from
$x_2-q^*x_3\ge x_2/2$, giving the same proof with the constant doubled.
No assertion is made here for $q^*>x_2/(2x_3)$ or for the sign/window
exits; those cases are witnesses in the archive and remain to be turned
into greedy-orbit contraction statements.
\end{proof}

\begin{remark}[aligned regime]\label{rem:Eb3}
In the remaining stretched regime ($\lvert w_3\rvert>\sqrt T/r_{12}$ with
$q^*$-reduction unavailable), the covolume identities force all three $w_i$
within $\sqrt T/r$ of a single real line $e^{i\varphi}\mathbb{R}$ when
$m\ge r^2$: writing $w_i=e^{i\varphi}(\tau_i+\varepsilon_i)$ with
$\tau_i\in\mathbb{R}$, $\lvert\varepsilon_i\rvert\le\sqrt T/r$, the transverse
structure reduces to \emph{two real linear forms} on the two-dimensional
window, and a simultaneous-Dirichlet pigeonhole on the $\asymp r_{12}r$ window
points produces a candidate reducing both forms by the square root of the
window size. The bookkeeping (box anchoring, exit at the window corner) is
standard and deferred. This is a genuinely stretched regime ($p>\Lambda$)
and is not covered by the compact domain of \S7, where $p\le\Lambda$.
\end{remark}

\subsection{The composed lever}

\begin{proposition}[the window is not purely subtractive]\label{prop:lever}
For the move $V(A,B,g)$, the $(1,3)$ matrix entry is $gB-A$: whenever
$gB>A$, the head transform $u_1'=u_1-gu_2+(gB-A)u_3$ \emph{adds} a positive
multiple of $u_3$. This composed lever is available inside the window (it is
the composition: reduce $u_2$ by $Bu_3$, then subtract $g$ copies), and it is
the mechanism by which the dynamics cancels deviations that no purely
subtractive candidate can reach.
\end{proposition}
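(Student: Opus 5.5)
The statement is a direct matrix computation, so I would prove it by composing the two elementary operations that make up $V(A,B,g)$. I would read off the move from the backward matrix already displayed in the proof of Theorem~\ref{thm:hdescent}: there the old components are recovered from the new ones by
\[
C=\begin{pmatrix}1&g&A\\0&1&B\\0&0&1\end{pmatrix},
\]
so $V(A,B,g)=C^{-1}$. Inverting this unipotent upper-triangular matrix with the standard formula gives $(C^{-1})_{12}=-g$, $(C^{-1})_{23}=-B$ and $(C^{-1})_{13}=gB-A$. The first row then reads $u_1'=u_1-gu_2+(gB-A)u_3$, which is exactly the claimed $(1,3)$ entry.

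To see this as a composition, I would check it against the coordinate formulas of Proposition~\ref{prop:mono}. There $x_2'=x_2-Bx_3$ and $x_1'=x_1-g(x_2-Bx_3)-Ax_3$. These match the two-stage description: first replace $u_2$ by $u_2^{(B)}:=u_2-Bu_3$, then set $u_1'=u_1-g\,u_2^{(B)}-Au_3$. Expanding,
\[
u_1'=u_1-gu_2+gBu_3-Au_3=u_1-gu_2+(gB-A)u_3 .
\]
This confirms the entry independently of the matrix inversion. In particular, whenever $gB>A$ the coefficient of $u_3$ in the new head is a positive integer, so the head transform adds a positive multiple of $u_3$.

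For availability inside the window, I would invoke the window constraints already used in Proposition~\ref{prop:mono}: $x_2-Bx_3\ge0$, together with the positivity of $x_1'$. Both are conditions on $(A,B,g)$ alone and place no sign restriction on $gB-A$. So there are admissible parameters with $gB>A$, for instance $A=0$, $B\ge1$, $g\ge1$ whenever $x_2\ge Bx_3$ and $x_1\ge gx_2$.

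The last clause, that this lever reaches cancellations no purely subtractive candidate reaches, is interpretive rather than a theorem. I would support it by noting that candidates with nonpositive off-diagonal entries only produce $u_1-\sum(\text{nonneg})u_j$, whereas here the $u_3$-coefficient is positive. The main obstacle, such as it is, is pinning down the exact matrix convention of the companion paper. I would settle it by consistency with the formula for $x_1'$ in Proposition~\ref{prop:mono}.
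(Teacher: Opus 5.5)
Your proposal is correct and takes essentially the same route as the paper. The paper's proof is just the matrix form of $V(A,B,g)$ together with the factorization $V(A,B,g)=V(A,0,g)\cdot V(0,B,0)$, which is your two-stage expansion $u_1'=u_1-g(u_2-Bu_3)-Au_3$; your inversion of the backward matrix $C$ from Theorem~\ref{thm:hdescent} is a consistent cross-check. Like the paper, you rightly treat the final ``no purely subtractive candidate'' clause as interpretive rather than as something the computation proves.
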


\begin{proof}
Immediate from the matrix form of $V(A,B,g)$ and the factorization
$V(A,B,g)=V(A,0,g)\cdot V(0,B,0)$ on the third coordinate.
\end{proof}

\begin{remark}[small head coefficients]
In the lever regime $gB>A$ the coefficient $gB-A$ of $u_3$ in the new head is
strictly positive, and the extreme value $1$ is attainable (e.g.\ $V(0,1,1)$);
no parity constraint is claimed --- $V(0,1,2)$ realizes the even value $2$.
The archived case analysis records the corpus instance
$V(4,7,5)=u_1-5u_2+31u_3$, contracting $m$ by a factor about $40$ at a
collar state; this is a measured witness, not a theorem used below.
\end{remark}

\subsection{The collar: integrality fences the degenerate boundary}

The one place where a naive minimax over \emph{real} configurations fails is
the sorting-equality boundary $x_1=x_2$ (and its mirror $x_2=x_3$). Integral
states \emph{of bounded height} cannot approach it (at growing height the
gaps can shrink to zero):

\begin{lemma}[integral gap]\label{lem:gap}
Let $s$ be an admissible state of $L$ with $\lvert N(u_i)\rvert\le B$ and
$u_1\neq u_2$, $u_2\neq u_3$. Then
\[
\frac{x_1}{x_2}\;\ge\;1+\frac{1}{4B},
\qquad
\frac{x_2}{x_3}\;\ge\;1+\frac{1}{4B}.
\]
\end{lemma}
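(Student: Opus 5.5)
The plan is to use only integrality of a difference of two components, $|N(\cdot)|\ge1$, and the height cap. No dynamics or module bound is needed. I treat the pair $(u_1,u_2)$ first; the pair $(u_2,u_3)$ is identical with indices shifted.

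\emph{Step 1 (a nonzero integral difference).} Set $d:=u_1-u_2\in L\subset\mathcal{O}_K$. By hypothesis $d\neq0$, so $|N(d)|\ge1$ because $N(d)$ is a nonzero rational integer. Since $\sigma_r$ is injective, $\sigma_r(d)=x_1-x_2\neq0$, and sorting gives $x_1\ge x_2$, hence $x_1-x_2>0$. Writing the norm in embedding coordinates gives
\[
1\le |N(d)|=(x_1-x_2)\,\lvert\sigma_c(u_1)-\sigma_c(u_2)\rvert^2 .
\]

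\emph{Step 2 (bounding the complex factor by the height).} From $x_i\lvert\sigma_c(u_i)\rvert^2=\lvert N(u_i)\rvert\le B$ we get $\lvert\sigma_c(u_i)\rvert\le (B/x_i)^{1/2}$. Since $x_1\ge x_2$, the triangle inequality gives
\[
\lvert\sigma_c(u_1)-\sigma_c(u_2)\rvert\le (B/x_1)^{1/2}+(B/x_2)^{1/2}\le 2(B/x_2)^{1/2},
\]
so $\lvert\sigma_c(d)\rvert^2\le 4B/x_2$.

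\emph{Step 3 (conclusion).} Combining the two steps gives $1\le (x_1-x_2)\cdot 4B/x_2$, that is, $x_1/x_2\ge 1+1/(4B)$. For the second inequality, apply the same argument to $d=u_2-u_3\neq0$, using $x_2\ge x_3$. Then $\lvert\sigma_c(u_2)\rvert+\lvert\sigma_c(u_3)\rvert\le 2(B/x_3)^{1/2}$, which yields $x_2-x_3\ge x_3/(4B)$.

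There is no genuine obstacle here. The only point to watch is that each bound in Step 2 is taken relative to the \emph{smaller} real coordinate of the pair, which is what produces the clean constant $4B$. It is also worth noting where integrality enters: it is what forces strict inequality $x_1>x_2$, and it is the fact used in Lemma~\ref{lem:soft}. There, $x_1=x_2$ with $u_1\ne u_2$ is excluded by $|N(u_1-u_2)|\ge1$, while $u_1=u_2$ is impossible for a basis.
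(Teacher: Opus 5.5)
Your proof is correct and matches the paper's: integrality of the nonzero difference $u_1-u_2$ gives $\lvert N(u_1-u_2)\rvert\ge1$, and the height cap gives $\lvert\sigma_c(u_1-u_2)\rvert^2\le 4B/x_2$, measured against the smaller real coordinate of the pair. The only difference is cosmetic: you apply the triangle inequality before squaring, while the paper uses $\lvert a-b\rvert^2\le2(\lvert a\rvert^2+\lvert b\rvert^2)$, and both give the same constant $4B$.
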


\begin{proof}
$v:=u_1-u_2\in L\setminus\{0\}$ has $\lvert N(v)\rvert\ge1$, so
$x_1-x_2=\sigma_r(v)\ge1/\lvert\sigma_c(v)\rvert^2$. By \eqref{eq:F2},
$\lvert\sigma_c(v)\rvert^2\le2(\lvert\sigma_c(u_1)\rvert^2+\lvert\sigma_c(u_2)
\rvert^2)\le2\bigl(\tfrac{B}{x_1}+\tfrac{B}{x_2}\bigr)\le\tfrac{4B}{x_2}$,
whence $x_1-x_2\ge x_2/(4B)$. Same computation for $(u_2,u_3)$ with
$x_3\le x_2$.
\end{proof}

Thus the effective configuration space of integral states of height $\le B$
is the \emph{truncated} compact --- all relative gaps at least $1/(4B)$ ---
and the certification domain of \S7 treats only the collars down to its
declared gap cutoff.

\begin{openlemma}[collar contraction]\label{olem:collar}
Let $d:=\min(x_1/x_2-1,x_2/x_3-1)>0$ be the sorting gap. What is needed to
complete the collar part of the reduction is an explicit function
$\gamma(d)>1$ and a certified greedy-orbit statement, valid as
$d\to0$, such that every high-phase collar state with gap at least $d$
has an admissible selected step (or a certified greedy block of length at
most a uniform $\kappa_c$ independent of $d$) satisfying
$m'\le m/\gamma(d)$, with the expected degeneration
$\gamma(d)\downarrow1$ as $d\downarrow0$.

The present evidence does not prove this lemma. The current certification
only reaches the compact with gaps down to $10^{-4}$, which by
Lemma~\ref{lem:gap} corresponds to heights $B\le2500$. In contrast the
height-descent bound can be as large as about $10^{19}$ in the reference
$x^3-2$ scale, allowing integral gaps of order $10^{-20}$. The $72$
boundary witnesses of the archive are empirical stability data on
$d\in[10^{-8},10^{-5}]$, not a collar lemma; the tail-boundary sweep has
best ratio tending to $1$ as $d\to0$, so this degeneration is real.
\end{openlemma}

\subsection{The collar witnesses}

\begin{lemma}[Euclid on the aligned corner]\label{lem:corner}
Let $\tau_1,\tau_2,\tau_3\in\mathbb{R}\setminus\{0\}$ with
$\sum_i x_i\tau_i=0$ and $x_i$ within a factor $2$ of each other. Then two of
the $\tau_i$ have the same sign, say $\lvert\tau_i\rvert\ge\lvert\tau_j
\rvert$, and replacing $\tau_i$ by $\tau_i-\tau_j$ decreases
$\sum\tau_k^2$ by at least $\lvert\tau_i\tau_j\rvert$. At the triple corner
($x_1\approx x_2\approx x_3$, all floors $\ge1$) the three pure pairwise
difference moves available are $(1,2)$ as $V(0,0,1)$, $(2,3)$ as
$V(0,1,0)$, and $(1,3)$ as $V(1,0,0)$ --- each with a FIXED orientation
(the head coordinate is reduced). The abstract decrease is therefore
realized by a catalogue move only when the same-sign pair's larger
component sits at the reduced position; in the opposite orientation (e.g.\
$\lvert\tau_3\rvert>\lvert\tau_2\rvert$ with the pair $(2,3)$, where only
$\tau_2\mapsto\tau_2-\tau_3$ is available) the available move can leave
$\sum\tau_k^2$ unchanged --- an explicit rational example is
$x=(\tfrac32,\tfrac75,1)$, $\tau=(-\tfrac{34}{15},1,2)$, where $V(0,1,0)$
preserves the energy exactly. In the favorable orientation the
corresponding window move realizes this decrease on the real parts; its
transfer to the full
transverse energy $T$ carries an alignment error whose magnitude, of order
$\sqrt T/r$, is observed on the adversarial corner profiles below --- an
observed order, not a proved bound; the lemma proves only the displayed
real inequality.
\end{lemma}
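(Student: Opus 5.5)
The plan is to prove only the displayed real inequality and the explicit combinatorial and numerical claims; the statements about transfer to $T$ are explicitly labelled observational in the lemma and will not be argued. First, the sign claim. Since all $x_i>0$ and all $\tau_i\neq0$, the constraint $\sum_i x_i\tau_i=0$ rules out all three $\tau_i$ having the same sign. So exactly two of them share a sign, by pigeonhole on three nonzero reals and two signs. The hypothesis that the $x_i$ lie within a factor $2$ of each other is not needed for this step. It only fixes the regime (the triple corner) in which the catalogue identification below is meaningful.

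Second, the energy decrease. Let $\tau_i,\tau_j$ have the same sign with $\lvert\tau_i\rvert\ge\lvert\tau_j\rvert$, so that $\tau_i\tau_j=\lvert\tau_i\tau_j\rvert>0$. Expand
\[
\tau_i^2-(\tau_i-\tau_j)^2=2\tau_i\tau_j-\tau_j^2=\lvert\tau_j\rvert\bigl(2\lvert\tau_i\rvert-\lvert\tau_j\rvert\bigr)\ge\lvert\tau_j\rvert\,\lvert\tau_i\rvert .
\]
The last step uses $2\lvert\tau_i\rvert-\lvert\tau_j\rvert\ge\lvert\tau_i\rvert$. The other two summands of $\sum_k\tau_k^2$ are unchanged, which gives the claimed decrease.

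Third, the catalogue identification. Read off the matrix form of $V(A,B,g)$ used in Propositions~\ref{prop:mono} and \ref{prop:lever}, where $u_1'=u_1-gu_2+(gB-A)u_3$, $u_2'=u_2-Bu_3$ and $u_3'=u_3$. With one parameter equal to $1$ and the others $0$ this gives:
\begin{itemize}
\item $V(0,0,1)$: $u_1\mapsto u_1-u_2$;
\item $V(0,1,0)$: $u_2\mapsto u_2-u_3$, and also $u_1\mapsto u_1+0\cdot u_3$ since $gB-A=0$;
\item $V(1,0,0)$: $u_1\mapsto u_1-u_3$.
\end{itemize}
In each case the lower-indexed (larger-$x$) coordinate is the one reduced. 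The window condition ``all floors $\ge1$'' at the triple corner guarantees that these three moves are admissible. Applied entrywise to real deviation coordinates, each move therefore realizes $\tau_i\mapsto\tau_i-\tau_j$ only with $i<j$. This yields the orientation restriction stated in the lemma.

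Fourth, the counterexample. Check $\sum x_i\tau_i=\tfrac32\cdot(-\tfrac{34}{15})+\tfrac75+2=-\tfrac{17}{5}+\tfrac75+2=0$. The $x_i$ are within a factor $2$ of one another, and $\tau_2=1$, $\tau_3=2$ are a same-sign pair in the unfavorable orientation. The move $V(0,1,0)$ sends $\tau_2\mapsto1-2=-1$, so $\tau_2^2=1$ is unchanged and $\sum\tau_k^2=\tfrac{1156}{225}+5$ is preserved exactly.

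I expect no step to be a real obstacle. The only point needing care is the precise meaning of ``realizes the decrease on the real parts''. I will state it as the identity above applied to the real deviation vector along the favorable pair, and explicitly decline to bound the imaginary or transverse error, as the lemma itself does.
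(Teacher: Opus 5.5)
Your proposal is correct and follows essentially the same route as the paper: the same-sign pair by pigeonhole, the expansion $(\tau_i-\tau_j)^2=\tau_i^2-\tau_j(2\tau_i-\tau_j)$ bounded using $\lvert 2\tau_i-\tau_j\rvert\ge\lvert\tau_i\rvert$, and the identification of the three corner differences with $V(0,0,1)$, $V(0,1,0)$, $V(1,0,0)$ because all floors are $\ge1$. Your two additions, which the paper leaves implicit, both check out: deriving the fixed orientation from the matrix form of $V(A,B,g)$, and verifying the rational counterexample ($\sum x_i\tau_i=0$ and $\tau_2^2$ unchanged).
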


\begin{proof}
Three nonzero reals always contain a same-sign pair. If
$\tau_i\tau_j>0$ and $\lvert\tau_i\rvert\ge\lvert\tau_j\rvert$, then
$(\tau_i-\tau_j)^2=\tau_i^2-\tau_j(2\tau_i-\tau_j)\le\tau_i^2-
\lvert\tau_i\tau_j\rvert$ since $\lvert2\tau_i-\tau_j\rvert\ge
\lvert\tau_i\rvert$ for same-sign pairs. Admissibility at the corner:
the three differences are $V(0,0,1)$, $V(0,1,0)$, and $V(1,0,0)$,
respectively, because all relevant floors are $\ge1$ there.
\end{proof}

In the favorable orientations, iterating Lemma~\ref{lem:corner} acts as a
real Euclidean algorithm on the $\tau_i$; whether the selected orbit stays
in favorable orientations is not proved (the example above shows an
energy-preserving step exists). Empirically it does better than that: on
$300$ adversarial aligned high-phase corner profiles the selected (min-$m$)
orbit halves $m$ in at most $3$ steps, feeding Lemma~\ref{lem:D1} with
$\kappa=3$, $\gamma=2$ --- a development-log measurement (its standalone
package did not survive; the archive ships a reconstructed profile family
within the minimax sweep package).

For the \emph{head collar} ($x_1\approx x_2$, rest stretched), the witness
block is: one difference/rotation step --- whose rebound admits the closed
form computed in the case analysis, at the degenerate limit $d\to0$ and
optimized over transverse profiles,
\[
\frac{m'}{m}\;\le\;1.94\ (s=1),\quad
1.62\ (s=\tfrac14),\quad 1.43\ (s=\tfrac1{100}),
\quad s:=(x_3/x_2)^2,
\]
{\sloppy with details in the archive --- followed by the composed reduction of
Lemma~\ref{lem:Eb2} on the rotated state (the created stretch is
large). The measured collar rebound of the real
corpus ($1.56$) sits inside this band, and every real collar step of the
corpus contracts outright ($m'/m\le0.732$ on all $12$ observed collar
steps --- a development-log measurement, not shipped standalone ---,
$T'/T\le1.08$: the recentering of Proposition~\ref{prop:recenter} absorbs
nearly the whole crude cost).\par}

\section{The lock: a finite minimax}\label{sec:lock}

\subsection{Partial reduction of Hypothesis (C$_\kappa$) to a compact minimax}

By homogeneity ($m$ has degree $0$ in $w$; the real triple rescales to
$(a,b,1)$) and the barycentric constraint $\sum x_iw_i=0$ with the $U(1)$
gauge, a high-phase state reduces to a point of the five-parameter compact
\[
\begin{aligned}
\mathcal{D}\;=\;\bigl\{(p,q,u,v,\rho):{}&\ 1+d_0\le q\le p\le\Lambda,\qquad d_0:=10^{-4},\\
&\ w_2=u+iv,\quad w_3=\rho\ge0,\quad
\max(\lvert u\rvert,\lvert v\rvert,\rho)\in[\tfrac12,1]\bigr\},
\end{aligned}
\]
with $w_1=-(qw_2+\rho)/p$ (here $(p,q,1)$ are the sorted real coordinates
up to scale). The certification campaign fixes the parameters
\[
\gamma=\frac{51}{50},\qquad m_0=10,\qquad \kappa=1,\qquad
\Lambda=60,\qquad \varepsilon_0=\frac1{100}.
\]
Thus the compact inequality currently targeted is the one-step statement
\[
\begin{aligned}
\textbf{(R)}\qquad
&m\bigl(\text{one greedy step}\bigr)
\;\le\;\max\Bigl(\frac{50}{51}m,\ \frac{99}{10}\Bigr)\\
&\qquad\text{on the set }\{s\in\mathcal{D}:m(s)\ge m_0\},
\end{aligned}
\]
equivalently $\mu'\ge\min((51/50)^2\mu,100/9801)$ in
$\mu=1/m^2$ coordinates. No parameter of this compact target is left free;
the open collar function $\gamma(d)$ of Open Lemma~\ref{olem:collar} is a
separate deferred parameter. Two boundary conventions are part of the
statement. First, (R) is stated on the truncated compact ($q\ge1+d_0$,
$d_0=10^{-4}$, the declared gap cutoff of \S7): at $q=1$ the $B$-lever dies ($d_2=0$)
and no uniform contraction margin is certified there (\S7.2), and the excluded tail collar
$q-1\in[1/(4B),d_0)$ is precisely the scope of Open
Lemma~\ref{olem:collar}. Second, on the sheet $A_\perp=0$ --- attained by
no admissible state, since $\delta>0$ is invariant
(Theorem~\ref{thm:noniso}) --- the target is read in its equivalent
$\mu=1/m^2$ form, where it holds trivially.

\begin{proposition}[partial reduction]\label{prop:partialR}
Assume (R) holds on $\mathcal{D}$ and combine it with the proved stretched
subcases of Lemma~\ref{lem:Ea} and the restricted form of
Lemma~\ref{lem:Eb2}. Then the compact part $p\le\Lambda$ and the stated
stretched subcases have one-step witnesses compatible with the greedy
selection. This is not a proof of Hypothesis (C$_\kappa$) on the whole
state space.
\end{proposition}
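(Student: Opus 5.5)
The plan is to cover the high phase $\{m(s)\ge m_0\}$ by three regions and to show that each region has a one-step candidate whose module meets the (C$_\kappa$) target. Since $\kappa=1$, Remark~\ref{rem:greedy} lets any admissible witness candidate transfer to the greedy orbit: the min-$m$ selection (Proposition~\ref{prop:conformal}) picks a candidate with module at most that of any admissible witness. So the whole task is to exhibit, in each region, an admissible move $V(A,B,g)$ or $W$ whose module is at most the target.

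First, the compact region. I would normalize the state using homogeneity and the $U(1)$ gauge of Proposition~\ref{prop:hyperbolic}. Module $m$ is invariant under the complex similarity $w\mapsto cw$, and it has degree $0$ in $w$ and in $\xi$. Concretely: scale $\xi$ to $(p,q,1)$, rotate $w_3$ to $\rho\ge0$, and scale so that $\max(\lvert u\rvert,\lvert v\rvert,\rho)\in[\tfrac12,1]$. The constraint $\sum x_iw_i=0$ then fixes $w_1=-(qw_2+\rho)/p$.

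I would then check that the move set, the window constraints (positivity of the new real coordinates) and the resulting modules depend only on these normalized parameters. For the window this holds because the constraints are homogeneous in $\xi$. For the modules it holds because $\delta$ and $T$ rescale compatibly, and $m=T\lvert\xi\rvert/\delta$ is expressible through $\xi$, $w$ and $A_\perp$ alone via Theorem~\ref{thm:covol}. Hence every high-phase state with $p\le\Lambda$ and gap at least $d_0$ maps to a point of $\mathcal{D}$ with the same greedy step value. (R) then directly supplies the one-step contraction $m'\le\max(\tfrac{50}{51}m,\tfrac{99}{10})\le\max(m/\gamma,m_0)$.

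Second, the stretched subcases. For $r\ge\Lambda$ with $\lvert w_3\rvert\le\sqrt T/r$, Lemma~\ref{lem:Ea} gives a $V$-move candidate with $m'\le21.2\,m/r$. This is at most $m/\gamma$ once $\Lambda=60\ge21.2\gamma$, and $21.2\cdot51/50\approx21.6$, so it holds. For the $u_2$-road band within the certified subwindow, Lemma~\ref{lem:Eb2} gives $m'\le14.1\,m/r_{12}$, or $28.2\,m/r_{12}$ on the half-window. This contracts provided $r_{12}\ge14.1\gamma$ (respectively $28.2\gamma$). I would check carefully that this threshold is implied by the hypothesis $m\ge r\,r_{12}$ together with the stated subwindow. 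Where it is not implied, I would record it as an extra restriction of the ``stated subcase'' rather than claim more.

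The main obstacle is bookkeeping rather than analysis. I need to verify that the normalization into $\mathcal{D}$ is exactly compatible with the algorithm's candidate enumeration, including the window and the tie-breaking on the compact boundary and the sheet convention. I also need to be explicit that the complement is left uncovered: the aligned regime (Remark~\ref{rem:Eb3}), the remaining $u_2$-road exits, and the collar $q-1<d_0$ (Open Lemma~\ref{olem:collar}). Hence the conclusion is only the partial statement, not (C$_\kappa$) on the whole state space.
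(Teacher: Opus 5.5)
Your proposal is correct and follows the paper's own argument. With $\kappa=1$, the min-$m$ selection does at least as well as any admissible one-step witness, so (R) on $\mathcal{D}$ and the stretched Lemmas~\ref{lem:Ea} and~\ref{lem:Eb2} transfer to the greedy orbit, and everything else is recorded as an exclusion; your explicit check that the normalization into $\mathcal{D}$ preserves every candidate's module is a useful detail that the paper leaves to the text preceding (R). The one check you left open comes out negatively, since $m\ge r\,r_{12}$ gives no lower bound on $r_{12}$. Your fallback is therefore the branch that applies, and the paper records exactly this as exclusion (i$'$), together with the band $m_0\le m<r\,r_{12}$.
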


\begin{proof}[Status and exclusions]
The greedy orbit is the algorithm's own selection sequence
(Remark~\ref{rem:greedy}); therefore a one-step witness transfers directly
to the real orbit. The current guard-accepted descent boxes all have this
$k=1$ form. The following gaps remain outside this proposition.

(i) The band $m_0\le m<r\,r_{12}$ in the $u_2$-road is not covered by
Lemma~\ref{lem:Eb2}. (i$'$) Within that road, Lemma~\ref{lem:Eb2}
contracts only when $r_{12}\ge14.1\gamma\approx14.38$; the subcase
$r_{12}<14.1\gamma$ is covered by none of the present lemmas. (ii) The
aligned stretched regime of
Remark~\ref{rem:Eb3} is deferred and cannot be folded back into the compact
certificate, because it has $p>\Lambda$ whereas $\mathcal{D}$ imposes
$p\le\Lambda$. (iii) Multi-step blocks from the sign/window exits of
Lemma~\ref{lem:Eb2} are witnesses only; by Remark~\ref{rem:greedy}, they do
not by themselves certify contraction of the greedy orbit. Only $k=1$
witnesses transfer immediately.
\end{proof}

\subsection{State of the evidence}

Three sampled sweeps, in increasing hostility of the domain (all in exact or
40-digit arithmetic, seeds and witnesses logged):

\begin{enumerate}
\item \emph{Truncated compact} (all relative gaps $\ge5\%$, $m_0\ge10$,
$776$ configurations, exploratory blocks $k\le3$, coefficients $\le2$):
worst best-ratio
$0.9777<1$, i.e.\ $\varepsilon_0\ge0.0223$ already with the smallest
vocabulary.
\item \emph{Saddle zones} ($p/q$ or $q$ within $[1.02,1.2]$, the two
sorting-collar neighbourhoods; $414$ adversarial configurations, $3.1\times
10^7$ candidates): with coefficients $\le8$ and $k\le5$ the worst best-ratio
drops to $0.19$ (head-collar zone) and $0.67$ (tail-collar zone); the
potential $\Phi_\beta=\log m+\beta\log r$ decreases there for every
$\beta\in\{\tfrac14,\tfrac13,\tfrac12\}$. Neither widening is individually
necessary: coefficients $\le8$ alone, or depth $\le5$ alone, already close
every configuration tested. The apparent saddle obstruction of the smallest
vocabulary was an artifact of capping \emph{both} simultaneously.
\item \emph{The degenerate boundaries themselves.} The two boundaries behave
differently, and the asymmetry is structural. At the \emph{head} boundary
$x_1=x_2$, exactly at $(p,q)=(\varphi,\varphi)$ --- where the small
vocabulary genuinely fails at $1.54$ --- the widened vocabulary closes every
tested profile, worst ratio $0.81$, improving to $0.10$ within $10^{-3}$ of
the boundary. At the \emph{tail} boundary $x_2=x_3$ ($q=1$ exactly, tested
at $p\in\{3/2,7/2,20,60\}$) the best ratio is exactly $1$ on every profile \emph{of the recorded
sweep}: the $B$-lever dies there ($d_2=0$). The sweep's profiles do not
exhaust the boundary --- head moves can still contract at small $p$, as an
explicit state at $p=3/2$ shows --- but no \emph{uniform} contraction
margin is certified at $q=1$; this is the one genuinely hard boundary, and it is precisely
the one that integral states of bounded height cannot attain or approach
(Lemma~\ref{lem:gap}); within $10^{-3}$ of it the ratios are already
$0.50$--$0.75$.
\end{enumerate}

No sampled configuration at positive distance from the tail boundary
resists the vocabulary (coefficients $\le8$, $k\le5$): the winning chains
are precisely the composed lever of Proposition~\ref{prop:lever}. The head
boundary closes mechanically; the tail boundary is fenced by arithmetic.
Moreover, under a fixed height cap $B$, integral states remain at relative
distance at least $1/(4B)$ from the degenerate boundaries
(Lemma~\ref{lem:gap}), a second, independent fence.

\subsection{Certification}

What separates the sweeps from a proof of the compact inequality (R) is
coverage: the sweeps sample $\mathcal{D}$, while the certificate must cover
it. Since $m^2$ and every
admissibility constraint are rational in the five parameters, a
branch-and-bound over boxes with directed-rounding interval arithmetic
suffices: for each box, either certify $\sup m<m_0$ (out of scope) or
exhibit one block whose admissibility and ratio bound hold over the whole
box. The quotient-free criterion --- certify
$m'_{\mathrm{hi}}\le\max(m_{\mathrm{lo}}/\gamma,\,m_0-\epsilon)$ instead of
the correlated ratio $m'/m$ --- removes the interval-dependency loss.

\emph{State of the certification (this version).} A first full run
(naive interval evaluation, depth $\le12$, $80$ digits, quotient-free
criterion, $116{,}375$ boxes processed) covers $22.9\%$ of the domain
volume ($4{,}916$ boxes certified out of scope, $326$ guard-accepted descent
boxes, zero anomalies; every fixture, including a replayed adversarial
configuration from the saddle sweep, is contained in its certified
enclosure). The $52{,}954$ unresolved boxes all sit at the depth cap, and
the majority lie \emph{far} from every boundary (median distances $3.7$
to $28$ in the domain's units): the bottleneck is the overestimation of
naive interval dependency at coarse boxes, not the geometry of the
domain. A second run with mean-value (centered) forms --- the standard
quadratic-order remedy --- confirmed the diagnosis in an unexpected way:
the per-box cost of interval gradients collapsed the throughput
($0.05\%$ coverage), while its logarithmic edge slices delivered a clean
witness-stability measurement (all $72$ boundary witnesses remain valid and
contracting from $d=10^{-5}$ down to $10^{-8}$, with relative ratio
variations below $2\times10^{-6}$). This is empirical stability evidence,
not Open Lemma~\ref{olem:collar}. A Lipschitz-grid design (exact rational evaluation at cell
centers plus per-region Lipschitz constants for $1/m^2$) then established
that uniform per-region constants are themselves the obstacle: the values
of $1/m^2$ are bounded but its region-wise derivative bounds range from
$27$ to $934$, forcing cell radii near $10^{-4}$, while exact rational
evaluation of composed chains collapses throughput. A vectorized
first-order variant (local gradients at cell centers, guarded
floating-point batches) then raised throughput by a factor
$\approx 665$ --- and thereby isolated the true obstacle: the
second-order remainder is controlled by per-region Hessian bounds
spanning several orders of magnitude (largest near the $v=0$ and
$\rho=0$ sheets), so
\emph{any} certificate of the form ``center value $\pm$ regularity
constant $\times$ radius'' needs subdivision depths whose breadth-first
frontier exceeds memory. The pipeline accordingly abandoned regularity
constants altogether in favor of \emph{Bernstein enclosures}: each
obligation (out-of-scope test and, per witness, descent plus move
admissibility) is a single polynomial with rational coefficients in the
five box variables (per-variable degrees $(24,22,4,6,6)$ for the descent
obligations and $(12,12,4,4,4)$ for the out-of-scope test, after clearing
denominators --- a global positive factor preserving signs), whose
Bernstein coefficients on a box bound it \emph{exactly} in exact
arithmetic --- no Lipschitz or Hessian constant enters; in the passes
below the coefficients are computed in guarded double precision, so the
passes have the status of a screening, not of an exact certificate;
sign-mixed boxes are subdivided by de Casteljau splitting. The first full Bernstein pass (depth cap $9$; guarded double-precision
floating point --- signs accepted only beyond a relative tolerance of
$2^{-40}$; no directed rounding, hence no proved forward-error bound ---
with exact-rational spot checks on sampled coefficients; three
independent fixture families passed, including containment of all $612$
pointwise-campaign states) screens
$25.7\%$ of the domain volume --- six of the seventeen shells are
$79$--$100\%$ screened --- with \emph{every} guard-accepted descent box
witnessed by a one-step move ($k=1$): on the guard-accepted (screened) descent volume
the greedy selection inherits the witness's contraction directly, i.e.\
hypothesis (C$_\kappa$) is consumed only in its immediate one-step form
there. The unresolved boxes are, overwhelmingly, sign-mixed boxes
stopped at the depth cap of this first pass ($78\%$ of the remaining
volume) or blocked by a deliberately conservative rectangular envelope
in one auxiliary variable. A deeper iterated pass (depth $18$, full
witness catalogue, envelope-refining splits) raised the total to
$28.9\%$ --- again with every guard-accepted descent box one-step --- and
isolated the two remaining levers, both mechanical: raw compute (the
bulk of the unresolved volume was simply not reached within this
pass's budget), and replacing the rectangular envelope by the exact
substitution $a=qb$ in the obligations. This run screens the disjunctive obligation of (R) on $28.9\%$ of the
compact volume, and the decomposition must be stated: $28.93\%$ of the
compact is screen-accepted through the out-of-scope branch ($m<m_0$; $24{,}503$
boxes), and $0.0068\%$ by guard-accepted descent boxes ($610$ boxes, volume
$0.410$ of $6075.411$; all volumes are the pipeline's exact per-box volumes
over its shell decomposition of $\mathcal D$, as recorded in the archive) --- the strictly-contracting coverage is
therefore small. The screen-accepted volume carries the guarded floating-point
status stated above, upgraded to exact only at the sampled spot checks.
The pass hardens the reduction by showing that
every guard-accepted descent box is one-step. It does not by itself prove
per-field periodicity: completion over the unified compact down to gaps
$10^{-4}$ would address only heights $B\le2500$ via Lemma~\ref{lem:gap},
and extension to the height range allowed by Theorem~\ref{thm:hdescent}
still requires Open Lemma~\ref{olem:collar}, as well as the deferred
aligned stretched subcase.

\section{Certificates, exactness, and formalization}\label{sec:formal}

Three design choices make the entire chain machine-checkable.

\emph{Exactness.} All comparisons the algorithm makes, and all quantities
the proof manipulates ($m^2$, scores, admissibility), are elements of
$\sigma_r(K)$ or of $\mathbb{Q}$ (Proposition~\ref{prop:exact}): every step
of every orbit is a finite exact computation, and every box obligation is a
polynomial statement decidable exactly; the Bernstein passes of \S7.3
evaluate these obligations in guarded floating point (a screening status,
stated there), so no floating point enters any statement \emph{labeled}
exact.

\emph{Finite witnesses.} The conditional theorem consumes only: the
identities of \S2--3 (algebraic), Lemma~\ref{lem:D1} (induction),
Theorem~\ref{thm:finiteness} (static bounds), and the future completion of
the compact certificate (R) together with the deferred stretched and collar
inputs isolated above. No ergodicity, no non-effective compactness, no
measure theory appears in the proved part of the chain.

\emph{Formalization.} The companion paper's certified graph computation
was verified in Lean 4 kernel-only (no \texttt{native\_decide} /
\texttt{ofReduceBool}). The same discipline now covers a sealed core of the
present paper --- kernel-only, axioms \texttt{[propext, Classical.choice,
Quot.sound]}, no \texttt{sorry}: the two-case analytic core of the
height-descent theorem (the per-step case dichotomy enters as a named
hypothesis), the finiteness pigeonhole (an orbit confined to a finite set of
classes eventually cycles), the dual and conformal identity layer of
\S2--3 in embedding coordinates (including the covolume statements, the
complete recentering minimization, and $m=1\Leftrightarrow V=0$; the
attachment to number-field data --- trace-dual basis, discriminant ---
remains at the hypothesis level), and an abstract assembly theorem composing
these layers through five named interface hypotheses into the conditional
periodicity conclusion. Remaining formal work: producing the per-step case
dichotomy from the algorithmic data, sealing the static finiteness bounds of
Theorem~\ref{thm:finiteness} with per-field instantiation ($\eta$ explicit,
its unit property checked by exact computation), and replaying the
(R)-certificate over $\mathbb{Q}$ inside the kernel once the interval
certificate and the deferred collar/stretched inputs are frozen.

\section{Discussion}\label{sec:discussion}

\subsection*{What is proved, what remains}

The structural identities of \S2--5 and the height-descent theorem are
proved. The case machine of \S6 contains proved stretched subcases and
explicitly marked partial pieces. The per-field theorem is conditional on
(C$_\kappa$), while Hypothesis (B) is no longer a separate obstruction:
Theorem~\ref{thm:hdescent} proves it under (C$_\kappa$). The remaining
work for (C$_\kappa$) is not just quadrature: (R) must be completed on the
compact, the aligned stretched regime and the remaining $u_2$-road
subcases are deferred, and the collar requires
Open Lemma~\ref{olem:collar}. The present Bernstein run screens
(guarded floating point) $28.9\%$ of the compact volume, almost
entirely on the out-of-scope branch (\S7.3).

\subsection*{Uniformity across fields}

The compact minimax of \S\ref{sec:lock} does not mention the field: its
configurations are real five-parameter data, and the field enters through
the integral gap (Lemma~\ref{lem:gap}) and the finiteness constants. If
(R), the aligned stretched subcase, the remaining $u_2$-road subcases,
and the collar contraction were all
completed uniformly, they would give, for each orbit, the per-orbit form of
(C$_\kappa$) with $\gamma_{\mathrm{orb}}=\min(51/50,\gamma(1/(4B)))$ and
$B=\max(H(s_0),C_H)$ --- which suffices for Theorems~\ref{thm:main}
and~\ref{thm:hdescent} by the simultaneous induction of the remark below;
a state-uniform $\gamma$ would not follow, since $\gamma(d)\downarrow1$.
This manuscript proves neither statement.

\begin{remark}[per-orbit assembly, no circularity]
Since the module cap $m^*=\varphi^{2\kappa}\max(m(s_0),m_0)$ of
Theorem~\ref{thm:main} depends only on $\kappa$ (not on $\gamma$), and
Theorem~\ref{thm:hdescent} is a one-step statement, a contraction constant
$\gamma(1/(4B))>1$ valid for states of height at most
$B:=\max(H(s_0),C_H(m^*))$ closes both bounds simultaneously by strong
induction on $t$: within each step, the module bound at time $t{+}1$ is
derived first (from the height bounds at times $\le t$), and the height
bound at time $t{+}1$ then uses that module bound --- strong induction, not
circularity. The assembly is therefore not circular ---
\emph{provided} the collar block length is uniformly bounded (the
$\kappa_c$ of Open Lemma~\ref{olem:collar}), so that $m^*$ does not
depend on $B$. In corollary form: the uniform Hypothesis~(C$_\kappa$)
implies its per-orbit form with
$\gamma_{\mathrm{orb}}=\min(51/50,\gamma(1/(4B)))$, and the per-orbit form
is all that Theorems~\ref{thm:main} and~\ref{thm:hdescent} consume.
\end{remark}

\subsection*{Relation to the totally real case}

Proposition~\ref{prop:unif} shows both signatures run one geometric rule.
The structures used here --- dual identity, covolume pinning, recentering
--- have verbatim totally-real analogues; we expect the present framework to
give an independent route to the results of \cite{Ka22,Ka24}, and, in the
other direction, the rotation number of the unit spiral
(Proposition~\ref{prop:spiral}) connects to the equidistribution phenomena
of \cite{DH25}.

\subsection*{Toward the full Problem 4}

Per-field periodicity for every $(K,L)$ is the algebraic half of Problem 4;
the converse half follows the classical eigenvector route, with one step
left unwritten here: upgrading an eventually periodic \emph{digit} sequence
of an arbitrary real input to a projective return of states (the
companion's propagation lemma gives the opposite implication). Granted such
a return, $Mx=\lambda x$ for the integer period matrix
$M\in\mathrm{GL}_3(\mathbb{Z})$ --- nontrivial, since the inverse move
matrices are nonnegative with unit diagonal, so a nonempty composition is
never $\pm I$ --- and, when the $\lambda$-eigenspace of $M$ is one-dimensional, a
projectively normalized representative of $x$ has
coordinates in $\mathbb{Q}(\lambda)$, of degree $\le3$; for inputs with
$\mathbb{Q}$-linearly independent coordinates, degree $\le2$ is impossible
(three independent numbers cannot lie in a $\mathbb{Q}$-space of dimension
$\le2$). We record this as the expected route; the digit-to-state
upgrading remains to be written. The remaining distance to the full
problem as stated in \cite{Ka24} is the digit-to-state upgrading above
(with its sorting-permutation bookkeeping and the one-dimensional-eigenspace
case), the completion of (R) on the compact,
the deferred aligned stretched case, the remaining $u_2$-road subcases,
and the collar contraction.

\subsection*{Methods statement}

Parts of the exploratory computations and of the manuscript preparation
were AI-assisted; every stated identity, lemma and constant was
independently re-derived or re-computed exactly as described in the text,
and the registered numerical campaigns ship as replayable packages in the
reproducibility archive. Three further campaigns (the $370/370$ high-phase
record, the $42$-orbit stress campaign, and the $200$-orbit crush campaign)
ship as hash-pinned historical data packages whose runners are not
executable from the archive; and four observations --- the $20/20$
scrambles, the $300$ aligned corner profiles, the $12$ corpus collar
steps, and the barycenter observation --- are development-log measurements
without surviving standalone packages. Each is flagged as such where it is
quoted.

\appendix

\section{Deferred sign subcases of the $u_2$-road}\label{app:signs}

In Lemma~\ref{lem:Eb2} the reduction integer
$q^*=\mathrm{round}(\operatorname{Re}(w_2\bar w_3)/\lvert w_3\rvert^2)$ may
fall outside $[0,\lfloor x_2/x_3\rfloor]$. The two exits below describe
the witness patterns seen in the case analysis; they are not yet a proof of
greedy-orbit contraction.
If $q^*<0$: the deviation-optimal combination is $u_2+\lvert q^*\rvert u_3$,
not available as a $B$-coefficient; one soft move first ($V(0,0,1)$, rebound
$\le3$ by the proof of Lemma~\ref{lem:soft}) re-sorts the triple, after which
the covolume identity holds verbatim for the permuted roles and the reduced
combination carries a nonnegative coefficient. If
$q^*>\lfloor x_2/x_3\rfloor$: reduce first with the truncated coefficient
$B=\lfloor x_2/x_3\rfloor$ (admissible by definition); the residual
deviation coefficient $q^*-B$ satisfies the same covolume bound relative to
the re-sorted state, and one iteration lands in the window (the coefficient
halves at least geometrically, since $\lvert w_3\rvert>\sqrt T/r$ bounds the
number of $w_3$-quanta by $r$). In both exits the block length grows by at
most one move and the rebound multiplies the constants of
Lemma~\ref{lem:Eb2} by at most $3$ as a witness estimate. Because these are
multi-step witnesses, Remark~\ref{rem:greedy} prevents an automatic transfer
to the selected greedy orbit; a quantified proof or a greedy-box
certificate is deferred.

\section{Constants}\label{app:constants}

\begin{center}
\resizebox{\textwidth}{!}{\begin{tabular}{lll}
\toprule
Constant & Value & Source \\
\midrule
soft rebound & $\varphi^2=2.618\ldots$ & Lemma~\ref{lem:soft}; optimal only for $T'_{\rm old}/T$ profiles \\
module rebound & $\approx2.081$ observed & empirical case-machine/archive optimization; not used in proofs \\
stretched, $u_3$-road & $21.2\,m/r$ & Lemma~\ref{lem:Ea} \\
stretched, restricted $u_2$-road & $14.1\,m/r_{12}$; otherwise $28.2$ in the half-window & Lemma~\ref{lem:Eb2} \\
integral gap & $1+1/(4B)$ & Lemma~\ref{lem:gap} \\
corner decrease of $\sum\tau_k^2$ (favorable orientation) & $\lvert\tau_i\tau_j\rvert$ & Lemma~\ref{lem:corner} \\
head-collar first-step rebound & $1.94/1.62/1.43$ & computed in the case analysis; details in archive \\
composed collar instance & factor about $40$ & measured witness $V(4,7,5)$ in the case analysis archive \\
truncated minimax (coeffs $\le2$, $k\le3$) & $0.9777$ & \S\ref{sec:lock}(1) \\
saddle zones (coeffs $\le8$, $k\le5$) & $0.19$ / $0.67$ & \S\ref{sec:lock}(2) \\
degenerate head boundary $(\varphi,\varphi)$ & $0.81$ & \S\ref{sec:lock}(3) \\
degenerate tail boundary $q=1$ & $1$ exactly (fenced by Lemma~\ref{lem:gap}) & \S\ref{sec:lock}(3) \\
$\Phi_{1/4}$ margin at the saddles & $-1.26$ / $-0.21$ & \S\ref{sec:lock}(2) \\
module cap along orbits & $m^*=\varphi^{2\kappa}\max(m(s_0),m_0)$ & Theorem~\ref{thm:main} \\
ratio--height bound & $r\le\max(12m^*\delta,(8H)^{1/3})$ & Lemma~\ref{lem:rbound} \\
integral floor & $x_i\ge\min(\lvert\xi\rvert/(4m^*\delta),(4\lvert\bar\zeta\rvert^2)^{-1/3})$ & Lemma~\ref{lem:ifloor} \\
height-descent threshold & $c^*=1/40$, $C_1=4(m^*\delta)^{3/2}/c^{*3}$, $C_H=\max(8m^*\delta,4C_1^2)$ & Theorem~\ref{thm:hdescent} \\
finiteness bound & $(2C_L\max(X_0,(B/x_*)^{1/2})+1)^9$ &
Theorem~\ref{thm:finiteness} \\
\bottomrule
\end{tabular}}
\end{center}

\section*{Version history}
\begingroup\sloppy
\emph{v1}: deposited on Zenodo, July 2026 (manuscript
\href{https://doi.org/10.5281/zenodo.21224269}{doi:10.5281/zenodo.21224269}).
\emph{v2} (this version): two local statements are corrected following
external review --- the fixed-point clause of the hyperbolic-reading
proposition (a unit preserves $m=\cosh d_{\mathbb H}(z_\perp,i)$ and rotates
the marked point $z_\perp$ about $i$; it does not fix $z_\perp$ unless
$\sigma_c(\lambda)$ is real), and the former parity remark (no parity
constraint holds; the supported statement is positivity with the extreme
value $1$ attainable). Neither statement is used in any proof of this paper.
Statement-level repairs following a second external review: the compact
$\mathcal D$ truncated to $q\ge1+d_0$ with the boundary conventions of (R)
stated (at $q=1$ the $B$-lever dies and no uniform margin is certified; the
$A_\perp=0$ sheet is read in the $\mu$ form); the collar open lemma now requires a uniform block length, the
per-orbit assembly remark added, and the former ``uniform route'' sentence
corrected accordingly; the finiteness set restated for triples (its proof
never used the basis property); the converse-half argument stated with its
hypotheses; the block bookkeeping of Step~1 of the main theorem tightened
(the rebound step counted outside the block; $m^*$ and all downstream
constants unchanged); the corner lemma corrected (its abstract subtraction is realized by a
catalogue move only in the favorable orientation; an explicit
energy-preserving counterexample is now displayed, and the ``iterated
Euclid'' conclusion is demoted to the favorable-orientation case plus a
measured statement); the $\varphi$-identity of the soft-rebound proof
written with moduli (complex components); the Methods statement now
declares exactly which quoted measurements ship as replayable packages and
which are development-log records (the scrambles, corner profiles, corpus
collar steps, and the barycenter observation); the stress-campaign,
contraction and crush-time figures restated from the shipped packages
(worst contraction $1.37$ globally; $25/17$ closure split; $2{,}826$
recorded transitions; correlation over the $156$ uncensored orbits); the finiteness theorem's set now pins the invariant
$\delta$ --- without that constraint the statement admits an explicit
counterexample (unit norms, bounded module, unbounded coordinate
determinant; see the remark following the theorem), the v1 statement, for $\mathbb{Z}$-bases of $L$, pinned $\delta$
automatically (unit coordinate determinant) and was correct; the v2
generalization to triples is what requires the explicit constraint; the
constraint is exactly what the per-orbit application supplies. Editorial changes: the Bernstein passes and the $28.9\%$ figure requalified
(guarded floating point rather than directed rounding; the exact volume
decomposition between the out-of-scope branch and guard-accepted descent now
stated); two explanatory figures added (height descent;
recentering); PDF metadata completed; the companion-paper reference
corrected to its published title and manuscript DOI; absolute values written
systematically on norms; a successor-convention remark added after the
height-descent theorem; the formalization claim upgraded from the outlined path of v1 to the sealed
machine-checked core now in place --- the abstract sentence, the sixth
contribution item, and the formalization paragraph now state its exact scope
(audited against the Lean sources) --- and this paper's own reproducibility
archive is now cited in a Data availability section. All theorems, computations, and numerical results of
v1 are otherwise unchanged.\par\endgroup

\section*{Data availability}
\begingroup\sloppy
A reproducibility archive for this paper (case-machine and minimax pipelines,
sweep logs, witness tables, and verification scripts) accompanies it on
Zenodo: \href{https://doi.org/10.5281/zenodo.21224267}{doi:10.5281/zenodo.21224267}
(v2, including the Lean formalization layer --- the import closure of the
machine-checked core with its axiom audit, claim map and replay
instructions; archive SHA-256:
\texttt{919a52c0\allowbreak{}7ee8e35b\allowbreak{}e0758fec\allowbreak{}5815bb34\allowbreak{}b5cd0743\allowbreak{}b521db80\allowbreak{}25ba98a9\allowbreak{}da6b634d}). The companion paper's
archive is \href{https://doi.org/10.5281/zenodo.21182759}{doi:10.5281/zenodo.21182759}.\par\endgroup


\begin{thebibliography}{9}

\bibitem{DH25} K.~Dhanda, A.~Haynes, \emph{Accumulation points of
normalized approximations}, J. Number Theory \textbf{268} (2025) 1--38.
(arXiv:2310.00173)

\bibitem{GL08} O.~N.~German, E.~L.~Lakshtanov, \emph{On a multidimensional
generalization of Lagrange's theorem for continued fractions}, Izv. Math.
\textbf{72}, no.~1 (2008) 47--61.

\bibitem{He1850} Ch.~Hermite, \emph{Extraits de lettres de M. Ch. Hermite
\`a M. Jacobi sur diff\'erents objets de la th\'eorie des nombres.
(Continuation)}, J. reine angew. Math. \textbf{40} (1850) 279--315.

\bibitem{Ka22} O.~Karpenkov, \emph{On Hermite's problem, Jacobi-Perron type
algorithms, and Dirichlet groups}, Acta Arith. \textbf{203} (2022) 27--48.
(arXiv:2101.12707)

\bibitem{Ka24} O.~Karpenkov, \emph{On a periodic Jacobi-Perron type
algorithm}, Monatsh. Math. \textbf{205} (2024) 531--601,
doi:10.1007/s00605-024-02006-5. (arXiv:2101.12627)

\bibitem{companion} L.~Tagnon, \emph{A deterministic $\sin^2$-type
algorithm for complex cubic irrationalities with exact periodicity
certificates}, Zenodo deposit (2026), doi:10.5281/zenodo.21222498;
reproducibility archive doi:10.5281/zenodo.21182759.

\end{thebibliography}
\end{document}